\numberwithin{equation}{section}
\theoremstyle{plain}
\newtheorem{theorem}[equation]{Theorem}
\newtheorem{lemma}[equation]{Lemma}
\newtheorem{proposition}[equation]{Proposition}
\newtheorem{corollary}[equation]{Corollary}
\theoremstyle{definition}
\newtheorem{definition}[equation]{Definition}
\newtheorem{notation}[equation]{Notation}
\theoremstyle{remark}
\newtheorem{remark}[equation]{Remark}
\newtheorem{example}[equation]{Example}
\newcommand{\bB}{\mathbb{B}}
\newcommand{\bC}{\mathbb{C}}
\newcommand{\bF}{\mathbb{F}}
\newcommand{\bK}{\mathbb{K}}
\newcommand{\bN}{\mathbb{N}}
\newcommand{\bR}{\mathbb{R}}
\newcommand{\bZ}{\mathbb{Z}}
\newcommand{\cA}{\mathcal{A}}
\newcommand{\cB}{\mathcal{B}}
\newcommand{\cC}{\mathcal{C}}
\newcommand{\cD}{\mathcal{D}}
\newcommand{\cO}{\mathcal{O}}
\newcommand*{\laa}{\langle\langle}
\newcommand*{\raa}{\rangle\rangle}
\newcommand{\rmu}{{r^{-1}}}
\newcommand{\smu}{{s^{-1}}}
\newcommand{\tmu}{{t^{-1}}}
\providecommand{\cspn}{\overline{\mathop{\rm span}}}
\providecommand{\Gr}{\mathop{\rm Gr}}
\providecommand{\id}{{\mathop{\rm id}}}
\providecommand{\prim}{{\mathop{\rm Prim}}}
\providecommand{\red}{{\mathop{\rm r}}}
\providecommand{\spn}{{\mathop{\rm span}}}
\providecommand{\supp}{{\mathop{\rm supp}}}
\providecommand{\uni}{{\mathop{\rm u}}}
\date{\today}
\begin{document}

\title[Fixed point algebras for Fell bundles]{Fixed point algebras for\\ weakly proper Fell bundles}
\author{Dami\'{a}n Ferraro}
\subjclass[2010]{46L55 (Primary), 46L99 (Secondary)}
\keywords{Fell bundles, partial actions, proper actions, fixed point algebras}

\begin{abstract}
 We define weakly proper Fell bundles and construct exotic fixed point algebras for such bundles.
 Three alternative constructions of such algebras are given.
 Under a kind of freeness condition, one of our constructions implies that every exotic cross sectional C*-algebra of a weakly proper Fell bundle is Morita equivalent to an exotic fixed point algebras.
 The other constructions are used to show that ours generalizes that of Buss and Echterhoff on weakly proper actions on C*-algebras.
 We also generalize to Fell bundles the fact that every C*-action which is proper in Kasparov's sense is amenable. 
\end{abstract}

\maketitle

\tableofcontents

\section*{Introduction}

Green's Symmetric Imprimitivity Theorem \cite{Rf82Green} implies that given a free and proper action $\sigma$ of a locally compact and Hausdorff (LCH) group $G$ on a LCH space $X;$ the full crossed product $C_0(X)\rtimes_{\sigma} G$ is strongly Morita equivalent to $C_0(X/G),$ $X/G$ being the space of $\sigma-$orbits.
The ``large fixed point algebra'' of $\sigma,$ $C_b(X)_\sigma,$ is the set of fixed points of the the canonical action $\overline{\sigma}$ of $G$ on $C_b(X)$ determined by $\sigma,$ $\overline{\sigma}_t(f)(x)=f(\sigma_\tmu(x)
).$
Note $C_b(X)_\sigma$ is C*-isomorphic to $C_b(X/G),$ hence $C_0(X/G)$ is C*-isomorphic to a C*-subalgebra of the large fixed point algebra of $\sigma.$
That is why $C_0(X/G)$ is called a ``fixed point algebra''.

Green's Theorem may be used to show that $\sigma$ is amenable in the sense that $C_0(X)\rtimes_{\sigma} G$ agrees with the reduced cross product $C_0(X)\rtimes_{\red \sigma}G.$
Indeed, let $I$ be the kernel of the regular representation $C_0(X)\rtimes_{\sigma}G\to C_0(X)\rtimes_{\red\sigma}G.$
This ideal is induced from some ideal of $C_0(X/G),$ i.e. from a $\sigma-$invariant open set $U\subset X.$
But $C_0(U)$ is a $\sigma-$invariant ideal of $C_0(X),$ so using Green's Theorem once again we deduce that $I=C_0(U)\rtimes_{\sigma|_U}G.$
The only way of having $C_0(U)\rtimes_{\sigma|_U}G$ contained in the kernel of the regular representation is if $U=\emptyset,$ meaning that $I=\{0\}.$

In \cite{Ka88} Kasparov used proper actions on LCH spaces in order to construct fixed point algebras for actions on $C_0(X)-$algebras.
The same year Rieffel tried to give a definition of proper action on a C*-algebra without using proper actions on LCH spaces \cite{Rf90}.
Rieffel starts with an action $\alpha$ of a LCH group $G$ on a C*-algebra $A$ and seeks for a definition of proper action that allows him to establish a strong Morita equivalence between $A\rtimes_{\alpha} G$ and a C*-subalgebra of the (large) fixed point algebra
\begin{equation*}
 M(A)_\alpha:=\{T\in M(A)\colon \tilde{\alpha}_t(T)=T,\ \forall\ t\in G\},
\end{equation*}
where $\tilde{\alpha}$ is the natural extension of $\alpha$ to the multiplier algebra $M(A).$

Rieffel notices that certain $C_c(G,A)-$valued inner products are positive definite in the reduced crossed product, but not in the full crossed product in general.
Kasparov does not have this problem because his proper actions are always amenable (the full and reduced crossed products agree).

Buss and Echterhoff introduced in \cite{BssEff14univ} the concept of the weakly proper action.
Every Kasparov's proper actions is weakly proper and any weakly proper action is proper in Rieffel's sense.
Eventhoug not every weakly proper actions is amenable, they are nice enough as to be able to prove that the $C_c(G,A)-$valued inner products are positive in the full crossed product.
Moreover, one even has Symmetric Imprimitivity Theorems for weakly proper actions \cite{BssEffimpr2015} (generalizing Raeburn's Symmetric Imprimitivity Theorem \cite{Rb88}).

An interesting feature of weakly proper actions is that for every exotic crossed product $\rtimes_\mu$ lying between the full $\rtimes_\uni$ and reduced $\rtimes_\red,$ there exists a $\mu$-fixed point algebra $A^\alpha_\mu$ strongly Morita equivalent to $A\rtimes_{\mu \alpha}G.$
So a weakly proper action $\alpha$ is amenable ($\rtimes_\uni=\rtimes_\red$) if and only if it has only one exotic fixed point algebra.
 
There have been other attempts to define proper actions on C*-algebras that have lead to study integrable (or square integrable) actions, see for example \cite{BssEffimpr2015,My01,Rf04} and the references therein.
Our work can be seen as a generalization of Buss and Echterhoff's one to Fell bundles, which in turn generalizes Kasparov's.

With some notational effort one can extend Buss and Echterhoff's work to twisted actions and to partial actions, separately.
But a natural preservation instinct should prevent everybody to try give a definition of ``weakly proper twisted partial action''.
Twisted partial action are defined in \cite{Ex97}, where it is shown that under certain hypotheses a Fell bundles can be described as the semidirect product bundle of a twisted partial action.
This is of importance to us because it says that there is a kind of natural action associated to every Fell bundle.
Then (at least in some cases) it should be possible to determine whether or not an action on a C*-algebra is weakly proper using only the semidirect product bundle of the action.
A step further in this direction would be to state the definition of weakly proper action itself using the semidirect product bundle.
After this one should obtain something very close to a definition of weakly proper Fell bundle.
This is what we have done, and we show our results in this work.

Quite often authors working with Fell bundles assume the bundles are saturated.
This is even true in some parts of \cite{FlDr88}, where Fell bundles are called C*-algebraic bundles.
But we do not want to assume our Fell bundles are saturated because the semidirect product bundle of a C*-partial action (as considered in \cite{Ex97}) is saturated if and only if the partial action is global (i.e. just an action).

The ``test case'' that motivated this work was the semidirect product bundle of a partial action on a commutative C*-algebra or, equivalently, a C*-partial action on a commutative C*-algebra.
This turned out to be quite important in the general theory, so we dedicate the first section of this work to study this test case.

In Section \ref{sec:weakly proper Fell bundles} we give the definition of a general weakly proper and Kasparov proper Fell bundles.
We also show that every Kasparov proper Fell bundles is amenable.
Finally, in the last section, we construct the fixed point algebra of a Fell bundle and give three different ways of constructing these algebras.

\section{Proper partial actions on LCH spaces}

Let's start by recalling some equivalent definitions of proper actions on LCH spaces.
Suppose $\sigma$ is an action of the LCH group $G$ on the LCH space $X.$
Then $\sigma$ is proper if satisfies any (and hence all) the equivalent conditions:
\begin{enumerate}
 \item For every pair of compact sets, $L,M\subset X,$ the set 
 \begin{equation*}
 ((L,M)):=\{t\in G\colon \sigma_t(L)\cap M\neq \emptyset\} 
 \end{equation*}
 has compact closure.
 \item For every pair of compact sets, $L,M\subset X,$ the set $((L,M))$ is compact.
 \item Them map $G\times X\to X\times X,(t,x)\mapsto (\sigma_t(x),x),$ is proper (the preimage of every compact set is compact).
\end{enumerate}

Let's translate each one of the conditions above to partial actions and compare the respective candidates for the definition of proper partial action.
Assume $\sigma:=(\{X_t\}_{t\in G},\{\sigma_t\}_{t\in G})$ is a LCH partial action, that is, $\sigma$ is a topological partial action in the sense of \cite[Definition 1.1]{Ab03} with both $G$ and $X$ being LCH.
The domain and graph of $\sigma$ are, respectively,
\begin{align*}
 \Gamma_\sigma & := \{ (t,x)\in G\times X\colon x\in X_\tmu \}\\
 \Gr(\sigma) & :=\{(t,x,y)\in G\times X\times X\colon x\in X_\tmu,\ y=\sigma_t(x)\}.
\end{align*}

The natural translation of conditions (1-3) above are
\begin{enumerate}[(P1)]
 \item\label{item:LM has compact closure} For every pair of compact sets, $L,M\subset X,$ the set 
 \begin{equation*}
 ((L,M)):=\{t\in G\colon \sigma_t(L\cap X_\tmu)\cap M\neq \emptyset\} 
 \end{equation*}
 has compact closure.
 \item\label{item:LM is compact} For every pair of compact sets, $L,M\subset X,$ the set $((L,M))$ is compact.
 \item\label{item:F is proper} The map $F_\sigma\colon \Gamma_\sigma\to X\times X,\ (t,x)\mapsto (\sigma_t(x),x),$ is proper.
\end{enumerate}

The following examples show the conditions above are not equivalent.

\begin{example}[{\cite[Example 1.2]{Ab03}}]\label{exa:partial action given by flow}
 Let $\varphi$ be the flow of the vector field $f\colon \bR^2\setminus\{(0,0)\}\to \bR^2, f(x,y)=(1,0).$
 Then $\varphi$ defines a partial action of $\bR$ on $X:=\bR^2\setminus\{(0,0)$ that we now describe.
 Given $t\in \bR,$ let $X_t$ be $X\setminus \{ (st,0)\colon s\in [0,1]\}.$
 Then $\varphi_t\colon X_{-t}\to X_t$ is given by $\varphi_t(x,y)=(x+t,y).$
 We leave to the reader to verify that $\varphi$ satisfies (P\ref{item:LM has compact closure}).
 But $\varphi$ does not satisfy (P\ref{item:LM is compact}) because for the closed segments $L:=[(-2,1),(-1,0)]$ and $M=[(1,1),(1,0)],$  $((L,M))=(2,3]$ is not compact.
\end{example}

\begin{example}[{\cite[Example 1.4]{Ab03}}]
 Let $G:=\bZ_2$ act partially on $X=[0,1]$ by the partial action such that $\sigma_0=\id_X$ and $\sigma_1=\id_{[0,1/2)}.$ 
 Since every subset of $G$ is compact, $\sigma$ satisfies (P\ref{item:LM is compact}).
 But $\sigma$ does not satisfy (P\ref{item:F is proper}) because $\Gamma_\sigma=F_\sigma^{-1}(X\times X)$ is not compact.
\end{example}

With the previous examples and some extra work one can show that

\begin{proposition}
 For every LCH partial action $\sigma$ it follows that (P\ref{item:F is proper})$\Rightarrow$(P\ref{item:LM is compact})$\Rightarrow$(P\ref{item:LM has compact closure}) but none of the converses holds in general (as the previous examples show).
\end{proposition}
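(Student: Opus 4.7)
The plan is to argue (P\ref{item:F is proper})$\Rightarrow$(P\ref{item:LM is compact})$\Rightarrow$(P\ref{item:LM has compact closure}) directly, since the failure of both converses is already witnessed by the two preceding examples. The second implication is immediate: any compact subset of the Hausdorff group $G$ is closed and therefore equal to its own (compact) closure, so (P\ref{item:LM is compact}) trivially yields (P\ref{item:LM has compact closure}).

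For (P\ref{item:F is proper})$\Rightarrow$(P\ref{item:LM is compact}) I would fix compact sets $L,M\subset X$ and study the preimage $F_\sigma^{-1}(M\times L)\subset\Gamma_\sigma$. Unwinding the definition of $F_\sigma$ one sees that $(t,x)\in F_\sigma^{-1}(M\times L)$ if and only if $x\in L\cap X_\tmu$ and $\sigma_t(x)\in M$. Hence, if $\pi_G\colon G\times X\to G$ denotes the first projection, then $\pi_G\bigl(F_\sigma^{-1}(M\times L)\bigr)$ equals $((L,M))$ on the nose. Since $M\times L$ is compact and $F_\sigma$ is proper, $F_\sigma^{-1}(M\times L)$ is a compact subset of $\Gamma_\sigma$; applying the continuous map $\pi_G$ one concludes that $((L,M))$ is compact.

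For the failure of the converses I would appeal directly to the two worked examples above: Example \ref{exa:partial action given by flow} exhibits a LCH partial action satisfying (P\ref{item:LM has compact closure}) but violating (P\ref{item:LM is compact}), while the example immediately following it provides one that satisfies (P\ref{item:LM is compact}) yet fails (P\ref{item:F is proper}) because $\Gamma_\sigma$ itself is not compact. I do not expect any genuine obstacle here; the only mild point of care is that $F_\sigma$ is proper as a map out of $\Gamma_\sigma$ (not out of $G\times X$), but since $\Gamma_\sigma$ carries the subspace topology, compactness of $F_\sigma^{-1}(M\times L)$ inside $\Gamma_\sigma$ is the same as compactness inside $G\times X$, and the projection argument goes through unchanged.
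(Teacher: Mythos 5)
Your proof is correct and follows essentially the same route as the paper: (P2)$\Rightarrow$(P1) is immediate because compact subsets of the Hausdorff group $G$ are closed, the failure of the converses is witnessed by the two quoted examples, and (P3)$\Rightarrow$(P2) rests on identifying $((L,M))$ with the $G$-projection of $F_\sigma^{-1}(M\times L)$. Your version of the last step, using that the continuous image of a compact set is compact, is a slightly cleaner packaging of the paper's explicit subnet extraction, and your remark that compactness of $F_\sigma^{-1}(M\times L)$ is intrinsic (so it does not matter whether one views it inside $\Gamma_\sigma$ or inside $G\times X$) is exactly the right point to flag.
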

\begin{proof}
 Assume $\sigma$ satisfies (P\ref{item:F is proper}) and take two compacts sets, $L,M\subset X.$
 Take a net $\{t_i\}_{i\in I}\subset ((L,M)).$
 Then for all $i\in I$ there exists $x_i\in L\cap X_{t_i^{-1}}$ such that $\sigma_{t_i}(x_i)\in M.$
 Thus $\{(t_i,x_i)\}_{i\in I}\subset F_\sigma^{-1}(L\times M)$ and there exists a sub net $\{(t_{i_j},x_{i_j})\}_{j\in J}$ converging to some $(t,x)\in F_\sigma^{-1}(L\times M).$
 Thus, $x\in X_\tmu,$ $x=\lim_j x_{i_j}\in L,$ $\sigma_t(x)=\lim_j \sigma_{t_{i_j}}(x_{i_j})\in M$ and this implies $\lim_j t_{i_j}=t\in ((L,M)).$
 Hence (P\ref{item:F is proper})$\Rightarrow$(P\ref{item:LM is compact}). 
 
 The implication (P\ref{item:LM is compact})$\Rightarrow$(P\ref{item:LM has compact closure}) is trivial because $G$ is Hausdorff.
\end{proof}

A key feature of proper actions is that one can construct fixed point algebras with them.
In the topological context this means that the orbit space is a LCH space.
So lets define the orbit space of a topological partial action an lets try to see if any of the conditions (P\ref{item:LM has compact closure}-P\ref{item:F is proper}) guarantees a LCH orbit space.

\begin{definition}
 Let $\tau=(\{\tau_t\}_{t\in H},\{Y_t\}_{t\in H})$ be a topological partial action.
 The orbit of a set $U\subset Y$ is defined as $[U]_\tau:=\cup_{t\in H}\tau_t(U\cap Y_\tmu)$ and the orbit of a point $y\in Y$ is $[y]_\tau:=[\{y\}]_\tau.$
 A subset $U\subset Y$ is said to be invariant (or $\tau-$invariant) if $[U]_\tau\subset U$ (or, equivalently, $U=[U]_\tau$). 
\end{definition}

Note that $U\subset [U]_\tau$ because for $t=e,$ $\tau_t(U\cap X_\tmu)=U.$
 Besides, $[U]_\tau$ is invariant because the properties of set theoretic partial actions imply
 \begin{align*}
  [[U]_\tau]_\tau
   & =\bigcup_{t,s\in H} \sigma_t( \sigma_s(X_\smu \cap U)\cap X_\tmu )\\
   & =\bigcup_{t,s\in H} \sigma_t( \sigma_s(X_\smu \cap U)\cap \sigma_s(X_\smu\cap  X_\tmu) )\\
   & =\bigcup_{t,s\in H} \sigma_t( \sigma_s(X_\smu \cap X_{\smu \tmu}  \cap U))
     =\bigcup_{t,s\in H} \sigma_{ts}(X_\smu \cap X_{\smu \tmu}  \cap U)\\
   &\subset \bigcup_{t,s\in H} \sigma_{ts}( X_{\smu \tmu}  \cap U)
     \subset [U]_\tau
     \subset [[U]_\tau]_\tau.
 \end{align*}
 
\begin{remark}
  If $U\subset V\subset Y$ then $[U]_\tau\subset [V]_\tau.$
  This implies $[U]_\tau$ is the smallest invariant set containing $U.$
  Note also that $[U]_\tau$ is open if $U$ is.
 \end{remark}
 
\begin{remark}
  The whole space f$Y$ is the disjoint union of the orbits of it's point. 
  Indeed, it is clear that $y\in [y]_\tau$ for all $y\in Y.$
 Assume $y,z\in Y$ are such that $[y]_\tau\cap [z]_\tau\neq \emptyset.$
 Then there exists $r,s\in H$ such that $y\in X_\rmu,$ $z\in X_\smu$ and $\tau_r(y)=\tau_s(z).$
 Hence, by de definition of partial action, $y\in X_\rmu\cap X_{\rmu s}$ and $\tau_{\smu r}(y)=\tau_\smu(\tau_r(y))=z.$
 Thus $z\in [y]_\tau$ and this implies $[z]_\tau\subset [y]_\tau.$
 By symmetry we get that $[z]_\tau=[y]_\tau.$
\end{remark}

It is evident that the relation $y\sim z\ \Leftrightarrow \ [y]_\tau=[z]_\tau$ is an equivalence relation.
This relation is open in the sense that 
\begin{equation*}
 [U]_\tau=\{ z\in Y\colon [z]_\sigma\cap U\neq \emptyset \}
\end{equation*}
is open if $U$ is.

\begin{definition}
The orbit space of $\tau$ is $Y/\tau:=\{ [y]_\tau\colon y\in Y \},$ the canonical projection is $\pi\colon Y\to Y/\tau,\ y\mapsto [y]_\tau,$ and the topology of $Y/\tau$ is $\{ U\subset Y/\tau\colon \pi^{-1}(U)\mbox{ is open} \}.$ 
\end{definition}

The canonical projection is continuous, open and surjective.
Hence the orbit space of a topological partial action on a locally compact space is always locally compact.
The next example shows that condition (P\ref{item:LM is compact}) does not guarantee a Hausdorff orbit space.

\begin{example}
 Let $G=\bZ_2$ act partially on $X=[-2,2]$ by the partial action $\sigma=(\{\sigma_0,\sigma_1\},\{X,(-1,1)\})$ with  $\sigma_0:=\id_X$ and $\sigma_1\colon (-1,1)\to (-1,1)$ given by $\sigma_1(x)=-x.$
 Then $\sigma$ satisfies (P\ref{item:LM is compact}) and $[1]_\sigma\neq [-1]_\sigma,$ but every open invariant subset containing $1$ intersects every open invariant subset containing $-1.$
 Thus $X/\sigma$ is not Hausdorff, but it is locally compact.
\end{example}

Now we relate the orbit space of a topological partial action with the orbit space of it's topological enveloping action, as defined in \cite[Definition 1.2]{Ab03}.
The topological enveloping action of $\tau$ is, up to isomorphism of actions, a topological (global) action $\tau^e$ of $H$ on a topological space $Y^e$ such that: 
\begin{itemize}
 \item $Y$ is an open subset of $Y^e.$
 \item For all $t\in H,$ $Y_t=Y\cap \tau^e_t(Y).$
 \item For all $t\in H$ and $y\in Y_\tmu,$ $\tau_t(y)=\tau^e_t(y).$
 \item $Y^e=\bigcup_{t\in H} \sigma^e_t(Y)$ or, equivalently, $Y^e=[Y]_{\tau^e}.$
\end{itemize}

\begin{proposition}\label{prop:orbtit space of partial action and enveloping actions}
 Let $\tau$ be a topological partial action of $H$ on $Y$ and $\tau^e$ the enveloping action of $\tau,$ acting on the enveloping space $Y^e.$
 Then the map $Y/\tau\to Y^e/\tau^e,\ [y]_\tau\mapsto [y]_{\tau^e},$ is defined and is a homeomorphism.
\end{proposition}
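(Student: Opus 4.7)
The plan is to denote the claimed map by $\Phi\colon Y/\tau\to Y^e/\tau^e$, $[y]_\tau\mapsto [y]_{\tau^e}$, and verify in turn that it is well-defined, bijective, continuous, and open. The whole argument should be a bookkeeping exercise using the four defining properties of the enveloping action together with the definition of the quotient topology; there is no real obstacle, only a handful of small set-theoretic checks.

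First I would observe that since $\tau_t$ agrees with $\tau^e_t$ on $Y_{t^{-1}}$ (third bullet), every $\tau$-orbit in $Y$ is contained in the corresponding $\tau^e$-orbit. This gives that $y\sim_\tau z$ implies $y\sim_{\tau^e}z$, so $\Phi$ is well-defined. For surjectivity, take $w\in Y^e$; by the fourth bullet there exist $t\in H$ and $y\in Y$ with $w=\tau^e_t(y)$, and then $[w]_{\tau^e}=[y]_{\tau^e}=\Phi([y]_\tau)$. For injectivity, suppose $y,z\in Y$ satisfy $[y]_{\tau^e}=[z]_{\tau^e}$, so $z=\tau^e_t(y)$ for some $t\in H$. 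Since $y\in Y$ and $\tau^e_t(y)=z\in Y$, we have $y\in Y\cap \tau^e_{t^{-1}}(Y)=Y_{t^{-1}}$ by the second bullet, and then $\tau_t(y)=\tau^e_t(y)=z$ by the third bullet, whence $z\in [y]_\tau$ and $[y]_\tau=[z]_\tau$.

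Next I would handle the topological statements. Writing $\pi_\tau$ and $\pi_{\tau^e}$ for the two canonical projections, the composition of the inclusion $Y\hookrightarrow Y^e$ with $\pi_{\tau^e}$ is continuous and constant on $\tau$-orbits, so by the universal property of the quotient topology on $Y/\tau$ it factors through $\pi_\tau$ as the continuous map $\Phi$. For openness, given an open $U\subset Y/\tau$, the set $\pi_\tau^{-1}(U)$ is open in $Y$, hence open in $Y^e$ (as $Y$ is open in $Y^e$ by the first bullet), and $\Phi(U)=\pi_{\tau^e}(\pi_\tau^{-1}(U))$ is therefore open because $\pi_{\tau^e}$ is an open map. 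Thus $\Phi$ is a continuous open bijection, i.e.\ a homeomorphism.

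The only place where one has to be a little careful is the injectivity step, where the second bullet $Y_t=Y\cap\tau^e_t(Y)$ is exactly what converts ``$y$ and $z$ are in the same $\tau^e$-orbit and both lie in $Y$'' into ``$y$ lies in the domain of some $\tau_t$ with $\tau_t(y)=z$''. Every other step is essentially formal.
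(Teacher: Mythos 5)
Your proposal is correct and follows essentially the same route as the paper: the injectivity step is exactly the paper's key observation that $[y]_{\tau^e}\cap Y=[y]_\tau$ (via $Y_{t^{-1}}=Y\cap\tau^e_{t^{-1}}(Y)$), continuity comes from the universal property of the quotient, and openness from the fact that orbits of open sets are open. No gaps.
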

\begin{proof}
 We claim that $[y]_{\tau^e}\cap Y = [y]_\tau,$ for all $y\in Y.$
 Indeed, it is clear that $[y]_\tau\subset [y]_{\tau^e}\cap Y.$
 Conversely, if $z\in [y]_{\tau^e}\cap Y$ then there exists $t\in H$ such that $\tau^e_t(y)=z\in Y.$
 Thus $y\in Y\cap \tau^e_\tmu(Y)=Y_\tmu$ and $\tau_t(y)=\tau^e_t(y)=z.$
 Hence $z\in [y]_\tau.$
 
 The function $Y\to Y^e/\tau^e,\ y\mapsto [y]_{\tau^e},$ is continuous and constant in the $\tau-$orbits.
 Moreover, it is surjective because $[Y]_{\tau^e}=Y.$
 Then there exists a unique continuous and surjective map $h\colon Y/\tau\to Y^e/\tau^e,\ h([y]_\tau)= [y]_{\tau^e}.$
 Note $h$ is injective because, if $h([y]_\tau)=h([z]_\tau),$ then $[y]_\tau = h([y]_\tau)\cap Y = h([z]_\tau)\cap Y = [z]_\tau.$
 We also have that $h$ is open because if $U\subset Y$ is open, then $h([U]_\tau)=[U]_{\tau^e}$ is open.
\end{proof}

The next result (together with the previous one) is our main reason to adopt condition (P\ref{item:F is proper}) as the definition of proper partial action.

\begin{proposition}\label{prop:equivalent conditions of properness}
 Let $\sigma$ be a LCH partial action of $G$ on $X$ and let $\sigma^e$ be it's enveloping action, acting on the enveloping space $X^e.$
 Then the following are equivalent:
 \begin{enumerate}
  \item $\sigma^e$ is a LCH and proper action.
  \item Given a net $\{(t_i,x_i)\}_{i\in I}\subset \Gamma_\sigma$ such that $\{(\sigma_{t_i}(x_i),x_i)\}_{i\in I}\subset X\times X$ converges (to a point of $X\times X$), there exists a sub net of $\{(t_i,x_i)\}_{i\in I}$ converging to a point of $\Gamma_\sigma.$
  \item $\sigma$ satisfies condition (P\ref{item:F is proper}), that is: $F_\sigma\colon \Gamma_\sigma\to X\times X, F_\sigma(t,x)=(\sigma_t(x),x),$ is proper.
 \end{enumerate}
\end{proposition}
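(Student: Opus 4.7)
The plan is to prove the cycle (3) $\Rightarrow$ (2) $\Rightarrow$ (1) $\Rightarrow$ (3), taking advantage of the fact that the enveloping space $X^e=\bigcup_{t\in G}\sigma^e_t(X)$ is covered by open homeomorphic copies of $X$, which lets us transfer statements between $\sigma$ and $\sigma^e$.

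The implications (3) $\Rightarrow$ (2) and (1) $\Rightarrow$ (3) should be quick. For (3) $\Rightarrow$ (2), if $\{(t_i,x_i)\}\subset\Gamma_\sigma$ is such that $\{F_\sigma(t_i,x_i)\}$ converges in $X\times X$, its image eventually lies in a compact neighborhood $K\subset X\times X$ of the limit, so the net lies eventually in the compact set $F_\sigma^{-1}(K)\subset\Gamma_\sigma$ and therefore has a convergent subnet in $\Gamma_\sigma$. For (1) $\Rightarrow$ (3), given $K\subset X\times X$ compact, the set $F_{\sigma^e}^{-1}(K)\subset G\times X^e$ is compact by properness of $\sigma^e$; one then checks that any $(t,x)\in F_{\sigma^e}^{-1}(K)$ automatically satisfies $x\in X$ and $\sigma^e_t(x)\in X$, which forces $x\in X_{\tmu}$ and hence $(t,x)\in\Gamma_\sigma$ with $F_\sigma(t,x)=F_{\sigma^e}(t,x)$. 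Thus $F_\sigma^{-1}(K)=F_{\sigma^e}^{-1}(K)$ as sets, and since $G\times X$ carries the subspace topology from $G\times X^e$ (because $X$ is open in $X^e$), this is compact.

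The main effort, and the main obstacle, is (2) $\Rightarrow$ (1). Local compactness of $X^e$ is immediate once we have Hausdorffness, since $X^e$ is covered by the open sets $\sigma^e_t(X)$, each homeomorphic to the LCH space $X$. For Hausdorffness, suppose $y_1,y_2\in X^e$ cannot be separated by open sets. Writing $y_i=\sigma^e_{s_i}(a_i)$ with $a_i\in X$, and using that $\sigma^e_{s_i}(X)$ is an open neighborhood of $y_i$, there is a net $\{w_\lambda\}$ eventually in $\sigma^e_{s_1}(X)\cap \sigma^e_{s_2}(X)$ converging to both $y_1$ and $y_2$. Writing $w_\lambda=\sigma^e_{s_1}(u_\lambda)=\sigma^e_{s_2}(v_\lambda)$ with $u_\lambda,v_\lambda\in X$ and setting $t=s_2^{-1}s_1$, one has $u_\lambda\to a_1$, $v_\lambda=\sigma^e_{t}(u_\lambda)\to a_2$ in $X$, so $(t,u_\lambda)\in\Gamma_\sigma$ and $F_\sigma(t,u_\lambda)\to (a_2,a_1)$. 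Condition (2) then yields a subnet converging in $\Gamma_\sigma$, which must have limit $(t,a_1)$, so $a_1\in X_{\tmu}$ and $\sigma_t(a_1)=a_2$; chasing definitions gives $y_1=y_2$. Properness of $\sigma^e$ follows from a similar localization argument: given $\{(t_i,y_i)\}$ with $\{(\sigma^e_{t_i}(y_i),y_i)\}$ converging to $(\sigma^e_r(b),\sigma^e_s(a))$, pass eventually to charts via $y_i=\sigma^e_s(u_i)$ and $\sigma^e_{t_i}(y_i)=\sigma^e_r(v_i)$, then apply (2) to the net $(r^{-1}t_is,u_i)\in\Gamma_\sigma$, which converges after subnetting and yields a convergent subnet of $(t_i,y_i)$.

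The delicate step I expect to consume the most care is verifying that condition (2) genuinely forces the enveloping space to be Hausdorff; in particular, one must be careful that the net $\{w_\lambda\}$ witnessing non-separation really can be decomposed in both charts simultaneously, and that the subnet extracted from (2) has its first coordinate stabilizing (rather than merely converging in some larger group compactification). This is where the precise form of (2), which produces a limit lying in $\Gamma_\sigma$ and not just in the closure, is essential.
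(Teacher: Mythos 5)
Your proposal is correct, and it establishes the equivalence along a genuinely different cycle than the paper. The paper proves (1)$\Rightarrow$(2)$\Rightarrow$(3)$\Rightarrow$(1): its (1)$\Rightarrow$(2) is a net argument through compact neighbourhoods in $X^e$, its (2)$\Rightarrow$(3) extracts compactness of $F_\sigma^{-1}(L)$ from the net condition, and in (3)$\Rightarrow$(1) it obtains Hausdorffness of $X^e$ indirectly, by showing $\Gr(\sigma)$ is closed in $G\times X\times X$ and invoking Abadie's criterion \cite[Proposition 1.2]{Ab03}, before running the chart-localization argument for properness of $\sigma^e$. You instead prove (3)$\Rightarrow$(2)$\Rightarrow$(1)$\Rightarrow$(3): your (1)$\Rightarrow$(3) via the set identity $F_\sigma^{-1}(K)=F_{\sigma^e}^{-1}(K)$ for compact $K\subset X\times X$ is cleaner and more direct than the paper's two-step route, and your Hausdorffness proof is self-contained --- the doubled net $\{w_\lambda\}$ converging to both non-separated points, decomposed in the two charts $\sigma^e_{s_1}(X)$ and $\sigma^e_{s_2}(X)$, feeds condition (2) with a net whose first coordinate is the \emph{constant} $t=s_2^{-1}s_1$, so the worry you raise about the group coordinate only converging in some compactification does not actually arise. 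The properness part of your (2)$\Rightarrow$(1) is essentially the paper's localization argument with condition (2) replacing compactness of $F_\sigma^{-1}(U)$; when writing it up, record the routine last step that the limit $(t,\sigma^e_s(u))$ of the extracted subnet lies in $F_{\sigma^e}^{-1}(L)$, which follows because $L$ is closed in the (by then) Hausdorff space $X^e\times X^e$ and $F_{\sigma^e}$ is continuous. In short: the paper's route is shorter where it can lean on \cite{Ab03}, while yours trades that citation for an elementary separation argument and a slicker (1)$\Rightarrow$(3).
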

\begin{proof}
 Assume (1) and take a net $\{(t_i,x_i)\}_{i\in I}\subset \Gamma_\sigma$ such that $(\sigma_{t_i}(x_i),x_i)\to (y,x)\in X\times X.$
 Take compact neighbourhoods of $x$ and $y,$ $U$ and $V,$ respectively.
 Then there exists $i_0\in I$ such that $\{(t_i,x_i)\}_{i\geq i_0}\in F_{\sigma^e}^{-1}(U\times V).$
 Hence there exists a sub net $\{(t_{i_j},x_{i_j})\}_{j\in J}$ converging to a point $(t,z)\in G\times X.$
 We then have $z=\lim_j x_{i_j}=\lim_i x_i=x$ because $X$ is Hausdorff and $\sigma^e_t(x)=\lim_j \sigma^e_{t_{i_j}}(x_{i_j})=\lim_j \sigma_{t_{i_j}}(x_{i_j})=y\in X.$
 Thus $x\in X\cap \sigma^e_\tmu(X)=X_\tmu,$ meaning that $(t,x)\in \Gamma_\sigma.$
 
 Now assume (2) holds and take a compact set $L\subset X\times X $ and a net $\{(t_i,x_i)\}_{i\in I}\subset F_\sigma^{-1}(L).$
 Then $\{(\sigma_{t_i}(x_i),x_i)\}_{i\in I}\subset L$ has a converging sub net and, by passing to a sub net again and relabelling, we get a sub net $\{(t_{i_j},x_{i_j})\}_{j\in J}$ converging to a point $(t,x)\in \Gamma_\sigma$ and such that $\{(\sigma_{t_i}(x_i),x_i)\}_{i\in I}$ converges to some $(y,z)\in L.$
 We then have $z=x$ and, by continuity, $\sigma_t(x)=\lim_j \sigma_{t_{i_j}}(x_{i_j})=y.$
 This implies $(t,x)\in F_\sigma^{-1}(L).$
 Now (3) follows because the net $\{(t_{i_j},x_{i_j})\}_{j\in J}\subset F_\sigma^{-1}(L)$ converges to $(t,x)\in F_\sigma^{-1}(L).$
 
 Suppose $\sigma$ satisfies (3).
 Note that $X^e=\bigcup_{t\in G}\sigma^e_t(X)$ is locally compact because it is the union of open locally compact subsets.
 
 To show that $X^e$ is Hausdorff it suffices,  by \cite[Proposition 1.2]{Ab03}, to show $\Gr(\sigma)$ is closed in $G\times X\times X.$
 Take a net $\{(t_i,x_i,y_i)\}_{i\in I}\subset \Gr(\sigma)$ converging to $(t,x,y).$
 Then $\{(t_i,x_i)\}_{i\in I}\subset \Gamma_\sigma$ and $\{(\sigma_{t_i}(x_i),x_i)\}_{i\in I}=\{(y_i,x_i)\}_{i\in I}$ converges to $(y,x).$
 By taking a compact neighbourhood of $(y,x),$ $L,$ and considering $F_\sigma^{-1}(L)$ we get a sub net $\{(t_{i_j},x_{i_j})\}_{j\in J}$ converging to some $(s,z)\in F_\sigma^{-1}(L)\subset \Gamma_\sigma.$
 Since $G\times X$ is Hausdorff, $(t,x)=(s,z)\in \Gamma_\sigma$ and by continuity we get $\sigma_t(x)=\lim_i \sigma_{t_i}(x_i) = \lim_i y_i = y.$
 This means that $(t,x,y)\in \Gr(\sigma).$
 Hence $\Gr(\sigma)$ is closed and $X^e$ is Hausdorff.
 
 Take a compact set $L\subset X^e\times X^e$ and a net $\{(t_i,x_i)\}_{i\in I}\in F^{-1}_{\sigma^e}(L).$
 Then there exists a sub net $\{(t_{i_j},x_{i_j})\}_{J\in J}$ such that $\{(\sigma^e_{t_{i_j}}(x_{i_j}),x_{i_j})\}_{j\in J}\subset L$ converges to a point $(y,x)\in L.$
 Take $r,s\in G$ such that $\sigma^e_s(y),\sigma^e_r(x)\in X.$
 Note that
 \begin{align*}
  \lim_j \sigma^e_r(x_{i_j})& =\sigma^e_r(x)\in X &  \lim_j \sigma^e_{st_{i_j}\rmu }  (\sigma^e_r(x_{i_j})) = \sigma^e_s(y)\in X.
 \end{align*}
 Since $X$ is open in $X^e$ there exists $j_0\in J$ such that, for all $j\geq j_0,$ $\sigma^e_r(x_{i_j})\in X$ and $\sigma^e_{st_{i_j}\rmu }  (\sigma^e_r(x_{i_j}))\in X.$
 
 Let $U\subset X\times X$ be a compact neighbourhood of $(\sigma^e_s(y),\sigma^e_r(y)).$
 Then the net $\{ (st_{i_j}\rmu, \sigma^e_r(x_{i_j}))  \}_{j\geq j_0}$ is contained in $F^{-1}_\sigma(U)$ and, by passing to a sub net and relabelling, we can assume $\{ (st_{i_j}\rmu, \sigma^e_r(x_{i_j}))  \}_{j\in J}$ converges.
 In particular we get that $\{t_{i_j}\}_{j\in J}$ converges to some $t\in G.$
 This implies $\{(t_{i_j},x_{i_j})\}_{J\in J}$ converges to $(t,x)$ and $(t,x)\in F^{-1}_{\sigma^e}(L)$ because $ (\sigma^e_t(x),x)=\lim_j (\sigma^e_{t_{i_j}}(x_{i_j}),x_{i_j})\in L.$
\end{proof}

\begin{definition}
 A LCH partial action $\sigma$ is proper if it satisfies the equivalent conditions of Proposition~\ref{prop:equivalent conditions of properness}.
\end{definition}

At this point Proposition \ref{prop:equivalent conditions of properness} becomes a machine to construct every possible LCH proper partial action: just take a common LCH proper action and restrict it to an ideal.
For future reference we give an Example.

\begin{example}\label{exa:supertrivial partial action}
 Let $G$ be a discrete group and denote $\tau$ the action of $G$ on $G$ by multiplication.
 The restriction $\tau$ of $\sigma$ to the singleton $\{e\}$ is a proper partial action because $\tau^e=\sigma$ and $\sigma$ is proper.
 Note that for $t\in G\setminus\{e\}$ the homeomorphism $\tau_t$ is the empty function.
\end{example}

\begin{remark}
 The orbit space of every LCH proper partial action $\sigma$ is LCH.
 Indeed, this in known to hold for global actions, in particular for $\sigma^e.$
 Hence the same holds for $\sigma$ by Proposition~\ref{prop:orbtit space of partial action and enveloping actions}.
\end{remark}

We hope to have exhibited enough reasons to adopt our definition of proper partial actions.

\section{Weakly proper Fell bundles}\label{sec:weakly proper Fell bundles}

We start this section by constructing the basic examples of weakly proper Fell bundles.
Consider a LCH partial action $\sigma=(\{X_t\}_{t\in G},\{\sigma_t\}_{t\in G})$ of $G$ on $X.$
The natural partial action of $G$ on $C_0(X)$ defined by $\sigma,$ $\theta=\theta(\sigma):=(\{C_0(X_t)\}_{t\in G},\{\theta_t\}_{t\in G}),$ is given by $\theta_t(f)(x)=f(\sigma_\tmu(x))$ for all $f\in C_0(X_\tmu),$ $x\in X_t.$

In general, by a C*-partial action we mean a partial action on a C*-algebra as in \cite[Definition 2.2]{Ab03}.
The correspondence $\sigma \rightsquigarrow \theta(\sigma)$ is bijective between LCH partial actions and C*-partial actions on commutative C*-algebras.

Given a C*-partial action $\beta=(\{B_t\}_{t\in G},\{\beta_t\}_{t\in G})$ of $G$ on $B,$ the semidirect product bundle of $\beta,$ $\cB_\beta,$ is the Banach subbundle $\{(t,a)\colon a\in B_t,\ t\in G\}$ of the trivial Banach bundle $B\times G$ over $G$ together with the product and involution given by 
\begin{equation*}
 (s,a)^*= (\smu,\beta_\smu(a^*))\qquad \qquad (s,a)(t,b)=(st,\beta_s(\beta_\smu(a)b)).
\end{equation*}

Exel defines (in \cite{Ex97}) semidirect product bundles even for twisted partial actions.
Although our theory will cover this kind of bundles, we will not have to deal with them explicitly.
Following Exel we will write $a\delta_s$ instead of $(s,a).$
So $B_s\delta_s$ is actually the fibre $(s,B_s)$ of $\cB_\beta$ over $s\in G.$
This notation is convenient because we canonically identify the fibre $B_e\delta_e=B\delta_e$ (over the group's unit $e$)  with $B.$
Under this identification $B_s$ is a C*-ideal of $B\delta_e,$ not the fibre $B_s\delta_s.$

In case $B=C_0(X)$ and $\beta=\theta(\sigma),$ we write $\cA_\sigma$ instead of $\cB_{\theta(\sigma)}.$
We call $\cA_\sigma$ the semidirect product bundle of $\sigma.$

\begin{definition}
 The basic examples of weakly proper Fell bundle are the semidirect product bundles of LCH and proper partial actions.
\end{definition}

For convenience we introduce some notation to be used quite frequently in the rest of the work.

\begin{notation}\label{not:product of sets}
 Given sets $U$ and $V,$ a topological vector space $W,$ a binary operation $U\times V\to W,\ (u,v)\mapsto u\cdot v,$ and subsets $A\subset U$ and $B\subset V,$ we denote $A\cdot B$ the closed linear span of $\{a\cdot b\colon a\in A,\ b\in B\}.$
\end{notation}

Recall from \cite{abadie2017ideals} (and from \cite{abadie2019morita} for non discrete groups) that every Fell bundle $\cB=\{B_t\}_{t\in G}$ defines a topological partial action $\hat{\alpha}$ of $G$ on the spectrum $\hat{B_e}$ of the unit fibre $B_e.$
Given $t\in G,$ the set $I^\cB_t:= B_t B_t^*$ is an ideal of $B_e,$ so the spectrum $\hat{I^\cB_t}$ is an open subset of $\hat{B_e}.$
The homeomorphism $\hat{\alpha}_t\colon \widehat{I^\cB_\tmu}\to \widehat{I^\cB_t}$ is the Rieffel homeomorphism associated to the $I^\cB_\tmu-I^\cB_t-$equivalence bimodule $B_t;$ considered with the left and right inner products $(a,b)\mapsto ab^*$ and $(a,b)\mapsto a^*b,$ respectively.

The partial action $\hat{\alpha}$ described before is compatible with the natural quotient map $q\colon \hat{B_e}\to \prim(B_e)$ (from the spectrum to the primitive ideal space)  given by $q([\pi])=\ker (\pi),$ where $[\pi]$ is the unitary equivalence class of the irreducible representation $\pi.$
This means that there exists a unique topological partial action $\tilde{\alpha}$ of $G$ on $\prim(B_e)$ such that $\tilde{\alpha}_t$ maps $\cO_\tmu:= \prim(I^\cB_\tmu)$ bijectively to $\cO_t=\prim(I^\cB_t)$ sending $\ker(\pi)$ to $\ker(\hat{\alpha}_t(\pi)).$

For a basic example $\cB=\cA_\sigma$ we have $\hat{\alpha}=\sigma,$ so the identity $\cA_\sigma=\cA_\tau$ implies $\sigma=\tau$ and there is no possible ambiguity in our last definition.
It also follows that, for an arbitrary LCH partial action $\sigma,$ $\cA_\sigma$ is a basic example of a weakly proper Fell bundle if and only if $\sigma$ is proper.

In order to motivate our definition of weakly proper Fell bundle, and to justify the term ``weakly proper'' we are using, let's put Buss and Echterhoff's weakly proper actions \cite{BssEffimpr2015} in the context of Fell bundles.

A C*-action $\beta$ of $G$ on $B$ is weakly proper if there exists a proper LCH action $\tau$ of $G$ on $Y$ together with a *-homomorphism $\phi\colon C_0(Y)\to M(B)$ such that, with $\theta=\theta(\sigma),$
\begin{enumerate}
 \item $B=\phi(C_0(Y))B.$ By Cohen-Hewitt's Theorem this is equivalent to say every $b\in B$ admits a factorization $b=\phi(f)c.$
 \item For all $f\in C_0(Y),$ $b\in B$ and $t\in G,$ $\phi(\theta_t(f))\beta_t(b)=\beta_t(\phi(f)b).$
\end{enumerate}

In the situation above one can construct the function 
\begin{equation*}
 \cA_\tau\times \cB_\beta\to \cB_\beta,\ (f\delta_s,b\delta_t)\mapsto \phi(f)\beta_s(b)\delta_{st},
\end{equation*}
which we interpret as an action of $\cA_\tau$ on $\cB_\beta.$
The properties satisfied by this action motivate the following.

\begin{definition}\label{defi:action of Fell bundle by adjointable maps}
 Let $\cA=\{A_t\}_{t\in G}$ and $\cB=\{B_t\}_{t\in G}$ be Fell bundles over $G.$
 We say a function $\cA\times \cB\to \cB,\ (a,b)\mapsto a\cdot b,$ is an action of $\cA$ on $\cB$ by adjointable maps if
 \begin{enumerate}
  \item\label{item: defi s,t to st} For all $a\in A_s,$ $b\in B_t$ and $s,t\in G,$ $a\cdot b\in B_{st}.$
  \item\label{item: defi action is bilinear} For all $s,t \in G$ and $b\in B_t$ the function $A_s\to B_{st},\ a\mapsto a\cdot b$ is linear.
  \item\label{item: defi associativity} For all $a,c\in \cA$ and $b\in \cB,$ $a\cdot (c\cdot b)=(ac)\cdot b.$
  \item\label{item: defi action by adjointable map} For all $a\in \cA$ and $b,d\in \cB,$ $(a\cdot b)^*d=b^*(a^*\cdot d).$
  \item\label{item: defi continutiy} For all $b\in B_e$ the function $\cA\to \cB,\ a\mapsto a\cdot b,$ is continuous.
 \end{enumerate}
 We say the action is non degenerate if for all $t\in G,$ $(A_tA_\tmu) \cdot B_e=B_tB_\tmu$ (recall Notation \ref{not:product of sets}).
\end{definition}

In the conditions of the Definition above there exists a unique *-homomorphism $\phi\colon A_e\to M(B_e)$ such that $\phi(a)b=a\cdot b.$
If we were working with semidirect product bundles $\cB=\cB_\beta$ and $\cA=\cA_\sigma,$ then this map $\phi$ would be exactly the map $\phi\colon C_0(X)\to M(B)$ (after the identification $A_e=C_0(X)$ and $B_e=B$).
The difference between weakly proper actions and Kasparov proper actions is that the latter assume $\phi$ is central (i.e. the image of $\phi$ is contained in the centre $ZM(B_e)$).

\begin{definition}\label{defi:weakly proper Fell bundle}
 A Fell bundle $\cB$ over $G$ is weakly proper if there exists a basic example of a weakly proper Fell bundle over $G,$ $\cA,$ and a non degenerate action of $\cA$ on $\cB$ by adjointable maps.
 If, in addition, the map $\phi\colon A_e\to M(B_e)$ described above is central, we say $\cB$ is Kasparov proper.
\end{definition}

\begin{example}
 Every basic example of a weakly proper Fell bundle, say $\cA,$ is weakly proper.
 Indeed, one just needs to consider the multiplication of $\cA$ as an action of $\cA$ on $\cA.$
\end{example}

We will show later (in Corollary \ref{cor:monsters do not exist}) that a semidirect product bundle $\cA_\sigma$ of a LCH partial action $\sigma$ is weakly proper if and only if $\sigma$ is proper.
So the Example above produces all weakly proper Fell bundles coming from semidirect product bundles of LCH partial actions.

The non degeneracy requirement in Definition \ref{defi:weakly proper Fell bundle} is motivated by condition (\ref{item: non degeneracy for weakly proper partial actions}) in the example below (and to exclude the zero action).

\begin{example}[Weakly proper partial actions]\label{exa:weakly proper partial actions}
 Consider a C*-partial action $\beta=(\{B_t\}_{t\in G},\{\beta_t\}_{t\in G})$ of $G$ on $B$ for which there exists a proper LCH partial action $\sigma=(\{X_t\}_{t\in G},\{\sigma_t\}_{t\in G})$ of $G$ on $X$ and a *-homomorphism $\phi\colon C_0(X)\to M(B)$ such that, with $\theta=\theta(\sigma)$:
 \begin{enumerate}
  \item\label{item: non degeneracy for weakly proper partial actions} For all $t\in G,$ $B_t = \phi(C_0(X_t))B.$
  By Cohen-Hewitt's factorization Theorem this implies for every $b\in B_t$ and $t\in G$ there exits $f\in C_0(X_t)$ and $c\in B_t$ such that $b=\phi(f)c.$ 
  \item\label{item: equivariance for weakly proper partial actions} For all $t\in G,$ $f\in C_0(X_\tmu)$ and $b\in B_\tmu,$ $\phi(\theta_t(f))\beta_t(b)=\beta_t(\phi(f)b).$
 \end{enumerate}
 Then the semidirect product bundle $\cB_\beta$ is weakly proper with respect to the action 
 \begin{equation}\label{equ:action of Asigma in Bbeta for partial actions}
  \cA_\sigma\times \cB_\beta\to \cB_\beta,\ (f\delta_s,b\delta_t)\mapsto \beta_s(\phi(\theta_\smu(f))b)\delta_{st}.
 \end{equation}
 Note that the map above is defined because $\phi(C_0(X_\smu))B_t\in B_\smu\cap B_t,$ for all $s,t\in G.$
 Thus $\beta_s(\phi(C_0(X_\smu))B_t)\subset B_s\cap B_{st}.$
 One can not define the action \eqref{equ:action of Asigma in Bbeta for partial actions} if in (\ref{item: non degeneracy for weakly proper partial actions}) one requires, for example, $B_t = \phi(C_0(X_t))B_t.$
 The reader may explore other alternatives, but the author must say (\ref{item: non degeneracy for weakly proper partial actions}) is the only satisfactory condition he has been able to find.
  
 Is is not straightforward to verify \eqref{equ:action of Asigma in Bbeta for partial actions} is an action by adjointable maps.
 After some attempts to do this one feels that the computations needed are quite similar to those necessary to show the semidirect product bundle of a C*-partial action is a Fell bundle.
 We leave to the reader the adaptation of Exel's method to do this, see \cite{Ex97} (fortunately there are no twists here).
\end{example}

The action of a Fell bundle on another has some extra properties that we summarize below.

\begin{proposition}
 If $\cA\times \cB\to \cB, \ (a,b)\mapsto a\cdot b,$ is an action of the Fell bundle $\cA=\{A_t\}_{t\in G}$ on the Fell bundle $\cB=\{B_t\}_{t\in G}$ by adjointable maps then:
 \begin{enumerate}
  \item\label{item:bilinear} For all $s,t\in G$ the map $A_s\times B_t\mapsto B_{st},\ (a,b)\mapsto a\cdot b,$ is bilinear.
  \item\label{item: B linear} For all $a\in \cA$ and $b,c\in \cB,$ $a\cdot (bc)=(a\cdot b)c.$
  \item\label{item:bounded} For all $a\in \cA$ and $b\in \cB,$ $\|ab\|\leq \|a\|\|b\|$ and $(a\cdot b)^*(a\cdot b)\leq \|a\|^2 b^*b.$
  \item\label{item:continuity} The action $\cA\times \cB\to \cB, \ (a,b)\mapsto a\cdot b,$ is continuous.
  \item\label{item:strong non degeneracy} For all $t\in G,$ $A_t\cdot B_e =A_e\cdot B_t = B_t.$
 \end{enumerate}
\end{proposition}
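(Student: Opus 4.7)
The overall plan leverages two structural facts made available by Definition~\ref{defi:action of Fell bundle by adjointable maps}: first, axiom~(\ref{item: defi action by adjointable map}) is an adjointability condition for the natural $B_e$-valued inner product $\langle b_1,b_2\rangle=b_1^*b_2$ on each fibre $B_t$; second, axioms~(\ref{item: defi action is bilinear})--(\ref{item: defi action by adjointable map}) together make each $x\in A_e$ act as an adjointable operator on $B_t$. I would prove~(\ref{item:bilinear}) and~(\ref{item: B linear}) by a common rigidity argument: form the difference $x\in B_{st}$ (resp.\ $B_{srt}$) between the two sides and show $x^*d=0$ for all $d\in\cB$. For~(\ref{item:bilinear}), axiom~(\ref{item: defi action by adjointable map}) gives $(a\cdot(\alpha b_1+\beta b_2))^*d=(\alpha b_1+\beta b_2)^*(a^*\cdot d)=(\alpha(a\cdot b_1)+\beta(a\cdot b_2))^*d$; for~(\ref{item: B linear}) it gives $(a\cdot(bc))^*d=c^*b^*(a^*\cdot d)=c^*(a\cdot b)^*d=((a\cdot b)c)^*d$. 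Choosing $d=x$ in each case yields $x^*x=0$, hence $x=0$ by the C*-identity in $\cB$.

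For~(\ref{item:bounded}), I would view each fibre $B_t$ as a right Hilbert $B_e$-module. Axiom~(\ref{item: defi action by adjointable map}) forces $\phi_t(x)\colon b\mapsto x\cdot b$ to be adjointable for every $x\in A_e$, with adjoint $\phi_t(x^*)$; axioms~(\ref{item: defi action is bilinear}) and~(\ref{item: defi associativity}) then make $\phi_t\colon A_e\to\Lb(B_t)$ a $*$-homomorphism, automatically contractive. Applying this to $x=a^*a\in A_e$ (valid for any $a\in A_s$, with $\|a^*a\|=\|a\|^2$), the standard Hilbert-module inequality $\phi_t(a^*a)\le\|a\|^2\,\id_{B_t}$ translates, after pairing against $b$ and using axioms~(\ref{item: defi associativity}) and~(\ref{item: defi action by adjointable map}) to identify $b^*((a^*a)\cdot b)=(a\cdot b)^*(a\cdot b)$, into $(a\cdot b)^*(a\cdot b)\le\|a\|^2\,b^*b$. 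The norm bound $\|a\cdot b\|\le\|a\|\|b\|$ follows from the C*-identity.

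For~(\ref{item:continuity}), my plan is to bootstrap axiom~(\ref{item: defi continutiy}) in two stages. First, I would establish joint continuity of $\cA\times B_e\to\cB$: given $(a_i,c_i)\to(a_0,c_0)$, choose a continuous section $\zeta$ of $\cB$ through $a_0\cdot c_0$ and estimate $\|a_i\cdot c_i-\zeta(p(a_i))\|\le\|a_i\|\,\|c_i-c_0\|+\|a_i\cdot c_0-\zeta(p(a_i))\|$, where the first term vanishes by~(\ref{item:bounded}) and upper semicontinuity of the bundle norm, while the second vanishes by axiom~(\ref{item: defi continutiy}). Second, given a general net $(a_i,b_i)\to(a_0,b_0)$ in $\cA\times\cB$, I would factorise $b_0=b_0'c_0$ via Cohen--Hewitt (using $B_{p(b_0)}=B_{p(b_0)}B_e$), pick a continuous section $\eta$ of $\cB$ through $b_0'$, and use~(\ref{item: B linear}) together with continuity of Fell-bundle multiplication to reduce the problem to the joint continuity established in the first stage, with errors controlled by~(\ref{item:bounded}). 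The bundle-topology bookkeeping around the cross-fibre error terms in this reduction is the main technical obstacle.

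Finally, for~(\ref{item:strong non degeneracy}), the plan is to combine the non-degeneracy identity $(A_tA_\tmu)\cdot B_e=B_tB_\tmu$ with the standard Fell-bundle identity $B_t=B_tB_\tmu B_t$ (proved via continuous functional calculus on $b^*b\in B_\tmu B_t$, so that $b(b^*b)^{1/n}\to b$ belongs to $B_tB_\tmu B_t$). Right-multiplying the non-degeneracy identity by $B_t$ and applying~(\ref{item: B linear}) together with $A_\tmu\cdot B_t\subset B_e$ produces
\[
 B_t=B_tB_\tmu B_t=((A_tA_\tmu)\cdot B_e)\,B_t=(A_tA_\tmu)\cdot B_t\subset A_t\cdot(A_\tmu\cdot B_t)\subset A_t\cdot B_e\subset B_t,
\]
forcing $A_t\cdot B_e=B_t$. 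The identity $A_e\cdot B_t=B_t$ then follows from $A_eA_t=A_t$, since $A_e\cdot B_t\supset(A_eA_t)\cdot B_e=A_t\cdot B_e=B_t$.
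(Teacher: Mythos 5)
Items (\ref{item:bilinear}), (\ref{item: B linear}), (\ref{item:bounded}) and (\ref{item:strong non degeneracy}) of your plan are correct and essentially coincide with the paper's proof: the ``rigidity'' trick (compute $x^*d$ via axiom (\ref{item: defi action by adjointable map}), take $d=x$, and use the C*-identity) is exactly what the paper does for (\ref{item:bilinear}) and (\ref{item: B linear}); your contractive \Star homomorphism $A_e\to\Lb(B_t)$ is the paper's $\overline{\varphi}\circ\phi$ in one step; and your chain $B_t=B_tB_\tmu B_t=((A_tA_\tmu)\cdot B_e)B_t\subset A_t\cdot B_e\subset B_t$ is the paper's argument for (\ref{item:strong non degeneracy}) (your variant for $A_e\cdot B_t$ via $A_eA_t=A_t$ is equally fine).

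The gap is in your second stage of (\ref{item:continuity}), and it is not mere bookkeeping. With the right-handed factorization $b_0=b_0'c_0$ ($c_0\in B_e$) and a continuous section $\eta$ through $b_0'$, part (\ref{item: B linear}) turns the main term into $(a_i\cdot\eta(p(b_i)))c_0$, whose inner factor has second argument $\eta(p(b_i))$ lying in the moving fibres $B_{p(b_i)}$, not in $B_e$. Neither axiom (\ref{item: defi continutiy}) nor your first stage says anything about such terms; controlling them is precisely the general joint continuity you are trying to prove, so the reduction is circular as stated. The repair is to put the $B_e$-factor on the \emph{left}: either factor $b_0=c_0b_0'$ with $c_0\in B_e$ via Cohen--Hewitt (using $B_t=B_eB_t$), or, as the paper does, take $u_\varepsilon\in B_e$ from an approximate unit with $\|b_0-u_\varepsilon b_0\|$ and (eventually) $\|b_i-u_\varepsilon b_i\|$ small. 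Then (\ref{item: B linear}) gives $a_i\cdot(u_\varepsilon b_i)=(a_i\cdot u_\varepsilon)b_i$, axiom (\ref{item: defi continutiy}) gives $a_i\cdot u_\varepsilon\to a_0\cdot u_\varepsilon$, continuity of multiplication in $\cB$ gives convergence of the product, your bound (\ref{item:bounded}) controls the error terms, and \cite[II 13.12]{FlDr88} concludes. With this orientation your first stage becomes superfluous.
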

\begin{proof}
 Clearly the map from (\ref{item:bilinear}) is linear in the first variable, to show it is linear in the second variable take $a\in A_s,$ $b,c\in B_t$ and $\lambda\in \bC.$
 Then, with $z:=a\cdot (b+\lambda c) - (a\cdot b + \lambda[a\cdot c]),$
 \begin{align*}
  \|  z \|^2
   & = \| z^*z  \|
     = \| (b+\lambda c)^*(a^*\cdot z) - b^*(a^*\cdot z)-\overline{\lambda}[b^*(a^*\cdot c)] \|
     = 0.
 \end{align*}
 This implies $z=0,$ that is $a\cdot (b+\lambda c)= a\cdot b + \lambda[a\cdot c].$
 
 The proof of (\ref{item: B linear}) is quite similar to the previous one.
 If $z=a\cdot (bc) - (a\cdot b)c,$ then
 \begin{equation*}
  \|z\|^2 = \|z^*z\| = \| (bc)^*(a^*\cdot z) - c^*b^*(a^*\cdot z) \|=0.
 \end{equation*}
 Hence (\ref{item: B linear}) follows.
  
 To prove (\ref{item:bounded}) take $a\in A_s$ and $b\in B_t$ and note that $(a\cdot b)^*(a\cdot b)=b^*(a^*a\cdot b).$
 For every $c\in A_e$ there exists a unique $\phi_c\in M(B_e)$ such that $\phi_c x = c\cdot x.$
 In fact the map $\phi\colon A_e\to M(B_e),\ c\mapsto \phi_c,$ is a *-homomorphism.
 Considering the fibre $B_t$ as a right $B_e-$Hilbert module (with inner product $\langle x,y\rangle=x^*y$) we have a non degenerate *-homomorphism $\varphi\colon B_e\to \bB(B_t)$ such that $\varphi(x)y=xy.$
 If $\overline{\varphi}\colon M(B_e)\to \bB(B_t)$ is the unique extension of $\varphi,$ then since every *-homomorphism between C*-algebras is contractive we have
 \begin{equation*}
  (a\cdot b)^*(a\cdot b) = b^*(a^*a\cdot b)=\langle b,\overline{\varphi}(\phi_{a^*a})b\rangle \leq \|\overline{\varphi}(\phi_{a^*a})\|\langle b,b\rangle \leq \|a^*a\|b^*b = \|a\|^2b^*b,
 \end{equation*}
 where we have used that $a^*a\geq 0$ in $A_e.$
 Then $\|a\cdot b\|=\| (a\cdot b)^*(a\cdot b) \|^{1/2} \leq \|a\|\|b\|.$

 To prove (\ref{item:continuity}) take a net $\{(a_i,b_i)\}_{i\in I}\subset \cA\times \cB$ converging to $(a,b)\in \cA\times \cB.$
 Let $\{(s_i,t_i)\}_{i\in I}\subset G\times G$ and $(s,t)\in G\times G$ be such that $a_i\in A_{s_i},$ $b_i\in B_{t_i},$ $a\in A_s$ and $b\in B_t.$
 Then $s_i\to s,$ $t_i\to t,$ $a_i\cdot b_i\in B_{s_it_i}$ and $s_it_i\to st.$
 
 Fix $\varepsilon >0.$
 Then, since $\cB$ has approximate units, there exists $u_\varepsilon \in B_e$ such that $\|b-u_\varepsilon b\|<\varepsilon(1+\|a\|)^{-1}.$
 Take $i_\varepsilon\in I$ such that $\|b_i- u_\varepsilon b_i\|<\varepsilon(1+\|a\|)^{-1}$ and $\|a_i\|<\|a\|+1$ for all $i\geq i_\varepsilon.$
 Our construction implies that, for all $i\geq i_\varepsilon,$ $\|a_i\cdot b_i - a_i\cdot (u_\varepsilon b_i)\|=\|a_i\cdot (b_i - u_\varepsilon b_i)\|<\varepsilon.$
 Now the definition of action by adjointable maps and claim (\ref{item: B linear}) imply
 \begin{equation*}
  \lim_i a_i\cdot (u_ib_i) = \lim_i (a_i\cdot u_\varepsilon) b_i = (a\cdot u_\varepsilon) b = a\cdot (u_\varepsilon b).
 \end{equation*}
 Besides, $\| a\cdot b -  a\cdot (u_\varepsilon b) \|<\varepsilon.$
 Now we can use \cite[II 13.12]{FlDr88} to deduce that $\lim_i a_i\cdot b_i = a\cdot b,$ thus the proof of (\ref{item:continuity}) is complete.
 
 Regarding (\ref{item:strong non degeneracy}), by considering $B_t$ as a right $B_e-$Hilbert module we deduce that
 \begin{equation*}
  B_t = B_tB_t^*B_t = A_tA_t^*\cdot B_t\subset A_t\cdot (A_\tmu \cdot B_t)\subset A_t\cdot B_e\subset B_t
 \end{equation*}
 and also that
 \begin{equation*}
  B_t = B_eB_e^*B_t = (A_e A_e^*\cdot B_e )B_t = A_e \cdot (A_e\cdot B_t)\subset A_e\cdot B_t\subset B_t.
 \end{equation*}
 Now the proof is complete.
\end{proof}

The next Lemma may look harmless or even unnecessary, but it is of key importance to us because it relates a LCH partial action $\sigma$ with the action of $\cA_\sigma$ on a Fell bundle.

\begin{lemma}\label{lem:theta and action of Atheta}
 Let $\sigma$ be a LCH partial action of $G$ on $X,$ $\cB$ a Fell bundle over $G,$ $\cA_\sigma\times \cB\to \cB,\ (a,b)\mapsto a\cdot b,$ a non degenerate action by adjointable maps and set $\theta:=\theta(\sigma).$
 If the map $\phi\colon A_e\to M(B_e),\ \phi(x)y:=x\cdot y,$ is central, then
 \begin{equation*}
  \theta_t(f)\delta_e \cdot b c = b(f\delta_e\cdot c), \ \forall\ t\in G,\ f\in C_0(X_\tmu),\ b\in B_t,\ c\in \cB.
 \end{equation*}
\end{lemma}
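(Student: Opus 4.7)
The plan is to factor the element $b \in B_t$ using the non-degeneracy of the action and then manipulate both sides by combining the action axioms with the centrality hypothesis.

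By property~(\ref{item:strong non degeneracy}) of the preceding proposition, $A_t \cdot B_e = B_t$, so the elements of the form $g\delta_t \cdot d$ with $g \in C_0(X_t)$ and $d \in B_e$ have dense linear span in $B_t$. Both sides of the desired identity are linear in $b$ and, thanks to continuity of the action and of Fell bundle multiplication, jointly continuous in $b$. Hence I would reduce the problem to the case $b = g\delta_t \cdot d$.

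For such a $b$, the LHS can be processed as follows. Using (\ref{item: B linear}) we write $bc = g\delta_t \cdot (dc)$; then associativity~(\ref{item: defi associativity}) of the action lets us collapse the two $\cA_\sigma$-factors into the $\cA_\sigma$-product $(\theta_t(f)\delta_e)(g\delta_t) = \theta_t(f)g\,\delta_t$. Because $C_0(X)$ is commutative, $\theta_t(f)g = g\,\theta_t(f)$, and a quick computation of the bundle product shows $g\,\theta_t(f)\,\delta_t = (g\delta_t)(f\delta_e)$ (here one uses that $\theta_t\theta_\tmu(g)=g$ for $g \in C_0(X_t)$). Running associativity and (\ref{item: B linear}) in the opposite direction then expresses the LHS as $(g\delta_t) \cdot \bigl((f\delta_e\cdot d)\,c\bigr)$. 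Meanwhile the RHS, by (\ref{item: B linear}), becomes $(g\delta_t) \cdot \bigl(d\,(f\delta_e\cdot c)\bigr)$. So the whole identity collapses to the commutation
\begin{equation*}
(f\delta_e \cdot d)\,c \;=\; d\,(f\delta_e \cdot c), \qquad d \in B_e,\ c \in \cB.
\end{equation*}

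This is exactly the point at which centrality of $\phi$ should enter, and I expect it to be the main obstacle because it requires pushing $\phi(f) \in ZM(B_e)$ past an element of a non-trivial fibre $B_r$. I would handle it by factoring $c = y c'$ with $y \in B_e$ (possible by Cohen–Hewitt applied to the non-degenerate left action of $B_e$ on $B_r$), so that $f\delta_e \cdot c = (f\delta_e\cdot y)c' = \phi(f)\,y\,c'$. Then
\begin{equation*}
(f\delta_e\cdot d)c = \phi(f)\,d\,y\,c' = d\,\phi(f)\,y\,c' = d\,(f\delta_e\cdot c),
\end{equation*}
where the crucial middle equality uses precisely $\phi(f)d = d\,\phi(f)$, i.e.\ the assumption that the image of $\phi$ lies in the centre of $M(B_e)$. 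Extending back by linearity and continuity in $b$ finishes the proof.
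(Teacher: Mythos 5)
Your proof is correct and follows essentially the same route as the paper's: both factor $b$ as $g\delta_t\cdot d$ via non-degeneracy and $c$ through $B_e$ via Cohen--Hewitt, then move $f$ across using property (\ref{item: B linear}), associativity, the product formula in $\cA_\sigma$ together with commutativity of $C_0(X)$, and finally centrality of $\phi$ at the level of $B_e$. The only difference is organizational: you isolate the intermediate commutation $(f\delta_e\cdot d)c = d(f\delta_e\cdot c)$, whereas the paper runs the same manipulations as a single chain of equalities starting from the right-hand side.
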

\begin{proof}
 Fix $s,t\in G$ and $f\in C_0(X_\tmu).$
 The maps $B_t\times B_s\to B_{ts}$ given by $(b,c)\mapsto \theta_t(f)\delta_e \cdot b c$ and $(b,c)\mapsto b(f\delta_e\cdot c)$ are continuous and bilinear.
 Besides $B_t=C_0(X_t)\delta_t \cdot B_e$ and $B_s = B_eB_s,$ thus we may assume $b=u\delta_t\cdot v$ and $c=zw$ for some $u\in C_0(X_t),$ $v\in B_e,$ $z\in B_e$ and $w\in B_s.$
 By considering $B_s$ and $B_t$ as a right $B_e-B_e-$Hilbert bimodules we deduce that 
 \begin{align*}
  b(f\delta_e\cdot c)
    & = (u\delta_t\cdot v)([f\delta_e\cdot z]w )
       =u\delta_t\cdot ([v\phi(f)z]w)
       =u\delta_t\cdot ([\phi(f) vz]w)\\
    & =u\delta_t f\delta_e \cdot (vc)
      =\theta_t (\theta_\tmu(u)f)\delta_t \cdot (vc)
      =\theta_t (f)u\delta_t \cdot (vc)\\
    &  =\theta_t (f)\delta_e u\delta_t \cdot (vc)
      =\theta_t (f)\delta_e\cdot ( u\delta_t \cdot (vc))
      = \theta_t (f)\delta_e \cdot b c.
 \end{align*}
\end{proof}

In the C*-algebraic context, if $A$ acts on $B$ and $B$ on $C$ by non degenerate actions by adjointable maps, then one can define a non degenerate action of $A$ on $C$ by adjointable maps.
This is also true for Fell bundles.

\begin{proposition}\label{prop:action on action}
 Let $\cA=\{A_t\}_{t\in G},$ $\cB=\{B_t\}_{t\in G}$ and $\cC=\{C_t\}_{t\in G}$ be Fell bundles over $G$ and $\cA\times \cB\to \cB, \ (a,b)\mapsto a\cdot b,$ and $\cB\times \cC\to \cC,\ (b,c)\mapsto b\diamond c,$ non degenerate actions by adjointable maps.
 Then there exists a unique action by adjointable maps $\cA\times \cC\to \cC,\ (a,c)\mapsto  a\star c,$ such that $a\star (b\diamond c)=(a\cdot b)\diamond c$ for all $(a,b,c)\in \cA\times \cB\times \cC.$
 Moreover, $\star$ is non degenerate.
\end{proposition}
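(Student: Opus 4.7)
\emph{Approach.} By strong non-degeneracy (item~\eqref{item:strong non degeneracy} of the preceding proposition, applied to $\diamond$), $C_t$ is the closed linear span of $\{b\diamond c\colon b\in B_t,\ c\in C_e\}.$ My plan is to \emph{define} $\star$ on such elementary sums by
$$ a\star(b\diamond c):=(a\cdot b)\diamond c,\qquad a\in A_s,\ b\in B_t,\ c\in C_e, $$
extend linearly to finite sums, and then extend by continuity to all of $C_t.$ Since $(a\cdot b)\diamond c\in B_{st}\diamond C_e\subset C_{st},$ this gives axiom~\eqref{item: defi s,t to st} and the prescribed identity $a\star(b\diamond c)=(a\cdot b)\diamond c;$ uniqueness of $\star$ then follows from density together with the bilinearity axiom~\eqref{item: defi action is bilinear}.

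\emph{The main estimate.} The whole construction rests on the norm inequality
$$ \Bigl\| \sum_{i=1}^n (a\cdot b_i)\diamond c_i \Bigr\| \leq \|a\|\, \Bigl\| \sum_{i=1}^n b_i\diamond c_i \Bigr\|, $$
which simultaneously guarantees well-definedness and enables the continuous extension; this is the step I expect to be the main obstacle. To prove it, set $Z:=\sum_i(a\cdot b_i)\diamond c_i$ and $W:=\sum_i b_i\diamond c_i$ and expand the $C_e$-valued squares using the associativity and adjointability axioms of both $\cdot$ and $\diamond$ to obtain
$$ Z^*Z = \sum_{i,j} c_i^*\bigl(b_i^*((a^*a)\cdot b_j)\diamond c_j\bigr),\qquad W^*W = \sum_{i,j} c_i^*\bigl((b_i^* b_j)\diamond c_j\bigr). $$
It then suffices to establish the matrix inequality $[b_i^*((a^*a)\cdot b_j)]_{ij}\leq \|a\|^2[b_i^* b_j]_{ij}$ in $M_n(B_e).$ For this, note that by item~\eqref{item:bounded} and the adjointability axiom for $\cdot,$ the map $T_a\colon B_t\to B_{st},\ b\mapsto a\cdot b,$ is an adjointable operator on the right Hilbert $B_e$-module $B_t$ with $T_a^*(d)=a^*\cdot d$ and $\|T_a\|\leq \|a\|,$ so $T_a^*T_a\leq \|a\|^2\id_{B_t}$ in $\bB(B_t).$ Compressing by the adjointable $V\colon B_e^n\to B_t,\ (x_i)\mapsto \sum_i b_ix_i,$ whose $V^*V$ is precisely the matrix $[b_i^* b_j]$ and whose $V^*T_a^*T_aV$ is $[b_i^*((a^*a)\cdot b_j)],$ yields the desired matrix inequality. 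Applying the $*$-homomorphism $\psi\colon B_e\to M(C_e),\ \psi(b)c:=b\diamond c,$ entrywise and pairing with the column $(c_1,\dots,c_n)^{T}$ translates this into $Z^*Z\leq \|a\|^2 W^*W$ in $C_e,$ whence $\|Z\|\leq \|a\|\|W\|.$

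\emph{Axioms and non-degeneracy.} Having extended $\star$ to a bounded linear map $C_t\to C_{st}$ of norm at most $\|a\|,$ I would verify axioms~\eqref{item: defi action is bilinear}--\eqref{item: defi action by adjointable map} of Definition~\ref{defi:action of Fell bundle by adjointable maps} first on elementary sums, where each identity reduces by a short calculation to the corresponding axiom for $\cdot$ plus associativity/adjointability of $\diamond;$ the general cases then follow from density together with the norm bound above. For continuity~\eqref{item: defi continutiy}, given $c\in C_e$ I approximate $c$ by a finite sum $\sum_k b_k\diamond c_k'$ with $b_k\in B_e,\ c_k'\in C_e$ (using $B_e\diamond C_e=C_e$), invoke continuity of $\cdot$ and $\diamond$ for the finite sum, and control the error uniformly on a compact neighbourhood of $a$ via the fibrewise bound $\|a\star(\cdot)\|\leq \|a\|.$ Finally,
$$ A_t\star C_e \supseteq A_t\star (B_e\diamond C_e) = (A_t\cdot B_e)\diamond C_e = B_t\diamond C_e = C_t $$
establishes strong non-degeneracy of $\star,$ which in particular gives the (weak) non-degeneracy required in Definition~\ref{defi:action of Fell bundle by adjointable maps}.
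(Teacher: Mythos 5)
Your proof is correct, and its overall architecture coincides with the paper's: define $\star$ on a dense spanning set of each fibre by the prescribed formula, establish the estimate $\|\sum_i(a\cdot b_i)\diamond c_i\|\leq\|a\|\,\|\sum_i b_i\diamond c_i\|$ to get well-definedness and a bounded extension, then verify the axioms by density. You diverge in two genuine ways. First, you span $C_t$ by $B_t\diamond C_e$ whereas the paper uses $B_e\diamond C_t$ (both legitimate by strong non-degeneracy of $\diamond$); the paper's choice keeps the $b_i$ in $B_e$, which is what makes its positivity trick run. Second, and more substantially, the key estimate itself: the paper takes the positive square root $w$ of $\|a\|^2-a^*a$ in $M(A_e)$, pushes it through $\overline{\phi}\colon M(A_e)\to M(B_e)$, and exhibits $\|a\|^2u^*u-v^*v$ directly as the positive sum $\sum_{i,j}(\overline{\phi}(w)b_i\diamond c_i)^*(\overline{\phi}(w)b_j\diamond c_j)$; you instead view $b\mapsto a\cdot b$ as an adjointable operator between the right Hilbert $B_e$-modules $B_t$ and $B_{st}$ with $T_a^*T_a\leq\|a\|^2$, compress by $V\colon B_e^n\to B_t$ to get the matrix inequality in $M_n(B_e)$, and transport it to $C_e$ through the \Star{}homomorphism $\psi.$ Your route is conceptually cleaner at the price of a little Hilbert-module operator theory; the paper's is more hands-on and self-contained. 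Two spots where you are quicker than you should be, though both are routine to repair: (i) the identity $a\star(b\diamond c)=(a\cdot b)\diamond c$ must be verified for $b,c$ in \emph{arbitrary} fibres, not only for $b\in B_t,$ $c\in C_e$ -- the paper does this with an approximate unit of $B_e$, and in your setup one writes $c$ as a limit of sums $d\diamond c'$ with $c'\in C_e$ and uses $(a\cdot(bd))\diamond c'=((a\cdot b)d)\diamond c'$; (ii) passing from $A_t\star C_e=C_t$ to the definitional non-degeneracy $(A_tA_\tmu)\star C_e=C_tC_\tmu$ requires the right-$\cC$-linearity $a\star(cd)=(a\star c)d$ applied to $C_\tmu=C_eC_\tmu$ (the paper instead computes $(A_tA_\tmu)\star C_e=(A_tA_\tmu\cdot B_e)\diamond C_e=B_tB_\tmu\diamond C_e=C_tC_\tmu$ in one line).
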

\begin{proof}
 Fix $s,t\in G.$
 In order to show there exists a unique bounded bilinear map $F_{s,t}\colon A_s\times C_t\to C_{st}$ such that $F_{s,t}(a,b\diamond c)=(a\cdot b)\diamond c$ for all $(a,c)\in A_s\times C_t$ and $b\in B_e,$ it suffices to show that $\| \sum_{j=1}^n (a\cdot b_j)\diamond c_j \|\leq \|a\| \| \sum_{j=1}^n b_j\diamond c_j\|,$ for all $n\in \bN,$ $b_1,\ldots,b_n\in B_e$ and $c_1,\ldots,c_n\in C_t.$
 
 Take $b_1,\ldots,b_n\in B_e$ and $c_1,\ldots,c_n\in C_t.$
 Define $u:=\sum_{j=1}^n b_j\diamond c_j$ and $v:=\sum_{j=1}^n (a\cdot b_j)\diamond c_j .$
 Consider the map $\phi\colon A_e\to M(B_e)$ given by $\phi(x)y=x\cdot y$ and let $\overline{\phi}\colon M(A_e)\to M(B_e)$ be the unique extension of $\phi.$
 If $w$ is the positive square root of $\|a\|^2-a^*a$ in $M(A_e),$ then
 \begin{align*}
  \|a\|^2 u^*u - v^*v
    & = \sum_{i,j=1}^n \|a\|^2 c_i^*(b_i^*b_j\diamond c_j ) - c_i^*([ b_i^*(a^*a\cdot b_j)]\diamond c_j )\\ 
    & = \sum_{i,j=1}^n c_i^*([ b_i^*\overline{\phi}(w^*w)b_j]\diamond c_j ) 
      = \sum_{i,j=1}^n (\overline{\phi}(w)b_i\diamond c_i)^*(\overline{\phi}(w)b_j\diamond c_j ) \geq 0.
 \end{align*}
 This implies $\|\sum_{j=1}^n (a\cdot b_j)\diamond c_j\|\leq \|a\|\|\sum_{j=1}^n b_j\diamond c_j\|$ (i.e. $\|v\|\leq \|a\|\|u\|$).
 
 Now we define $\cA\times \cC\to \cC,\ (a,c)\mapsto a\star c,$ in such a way that for $a\in A_s$ and $c\in C_t,$ $a\star c = F_{s,t}(a,c).$
 Clearly, $\star$ satisfies conditions (\ref{item: defi s,t to st}) and (\ref{item: defi action is bilinear}) from Definition \ref{defi:action of Fell bundle by adjointable maps}.

 Take $(a,b,c)\in \cA\times \cB\times \cC$ and $(r,s,t)\in G^3$ such that $a\in A_r,\ b\in B_s $ and $c\in C_t.$
 In order to compute $a\star (b\diamond c)$ we take an approximate unit of $\cB,$ $\{u_j\}_{j\in J}\subset B_e.$
 Then the continuity of the actions and of the maps $F_{p,q}$ ($p,q\in G$) implies 
 \begin{equation*}
  a\star (b\diamond c) 
   % = \lim_j a\star (u_jb\diamond c)
    = \lim_j a\star (u_j\diamond b\diamond c)
    = \lim_j (a\cdot u_j) \diamond (b\diamond c)
    = \lim_j (a\cdot u_j b) \diamond c
    = (a\cdot b)\diamond c.
 \end{equation*}
 
 Now we show condition (\ref{item: defi associativity}) from Definition \ref{defi:action of Fell bundle by adjointable maps} using the identity we have just proved.
 For all $a,d\in \cA$ and $c\in \cC$ we have
 \begin{align*}
  a\star (d\star c) 
    & = \lim_j  a\star (d\star [u_j\diamond c]) 
      = \lim_j  a\star ((d\cdot u_j)\diamond c )
      = \lim_j  (a\cdot (d\cdot u_j)) \diamond c\\
    & = \lim_j  (ad\cdot u_j) \diamond c
      = \lim_j  (ad)\star (u_j\diamond c)
      = (ad)\star c.
 \end{align*}

 To prove (\ref{item: defi action by adjointable map}) from Definition \ref{defi:action of Fell bundle by adjointable maps} take $a\in \cA$ and $c,f\in \cC.$
 Then
 \begin{align*}
  (a\star c)^* f
    & =\lim_j (a\star (u_j\diamond c))^* f  
      =\lim_j ((a\cdot u_j) \diamond c )^* f
      =\lim_j c^*( (a\cdot u_j)^* \diamond f)\\
    & =\lim_j\lim_k c^*( (a\cdot u_j)^* u_k \diamond f)  
      =\lim_j\lim_k c^*( u_j^* (a^*\cdot u_k) \diamond f)\\
    & =\lim_j\lim_k c^*( u_j^*\diamond (a^*\cdot u_k)\diamond f)
      =\lim_j\lim_k (u_j\diamond c)^*( (a^*\cdot u_k) \diamond f)\\
    & =\lim_j\lim_k (u_j\diamond c)^*(a\star (u_k \diamond f))
      =\lim_j (u_j\diamond c)^*(a\star f)
      = c^*(a\star f).
 \end{align*}
 
 Lets show (\ref{item: defi continutiy}) from Definition \ref{defi:action of Fell bundle by adjointable maps}.
 Note that by construction we get $\|a\star c\|\leq \|a\|\|c\|,$ for all $(a,c)\in \cA\times \cC.$
 Fix $c\in C_e$ and take a net $\{a_i\}_{i\in I}\subset  \cA$ converging to $a\in A_t.$
 Given $\varepsilon>0$ take $u_\varepsilon\in B_e$ (for example one of the terms of $\{u_j\}_{j\in J}$) such that $\|c-u_\varepsilon\diamond c\|<\varepsilon (1+\|a\|)^{-1}.$
 Then 
 \begin{equation*}
  \lim_i a_i\star (u_\varepsilon \diamond c) 
    = \lim_i a_i\star (u_\varepsilon \diamond c)
    = \lim_i (a_i\cdot u_\varepsilon) \diamond c
    = (a\cdot u_\varepsilon) \diamond c
    = a\star (u_\varepsilon \diamond c).
 \end{equation*}
 and $\| a\star (u_\varepsilon \diamond c) - a\star c \|< \varepsilon.$
 Besides, taking $i_\varepsilon\in I$ such that $\|a_i\|<\|a\|+1$ for all $i\geq i_\varepsilon,$ we get that 
 $ \| a_i\star (u_\varepsilon \diamond c) - a_i\star c  \|\leq \|a\| \| u_\varepsilon \diamond c-c \|<\varepsilon$ for all $i\geq i_\varepsilon.$
 Now \cite[II 13.12]{FlDr88} implies $\lim_i a_i\star c=a\star c.$
 
 Finally, $\star$ is non degenerate because for all $t\in G$
 \begin{equation*}
  A_tA_\tmu\star C_e 
    = A_tA_\tmu\star (B_e\cdot  C_e)
    = (A_tA_\tmu\cdot B_e)\diamond C_e
    = B_tB_\tmu \diamond C_e
    = C_tC_\tmu.
 \end{equation*}
 Now the proof is complete.
\end{proof}

We have defined weakly proper Fell bundles using actions of basic examples of weakly proper Fell bundles.
Then one might define ``weakly weakly proper Fell bundles'' using actions of weakly proper Fell bundles and so on.
Fortunately ``weakly${}^{n}$ proper Fell bundles'' = ``weakly proper Fell bundles''.

\begin{corollary}
 If $\cB$ is a weakly proper Fell bundle over $G$ and $\cC$ is a Fell bundle over $G$ admitting a non degenerate action by adjointable maps of $\cB,$ then $\cC$ is weakly proper.
\end{corollary}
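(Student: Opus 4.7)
The plan is to apply Proposition~\ref{prop:action on action} directly. Unpacking the hypothesis, saying that $\cB$ is weakly proper means (by Definition~\ref{defi:weakly proper Fell bundle}) that there exists a basic example of a weakly proper Fell bundle $\cA$ over $G$ (that is, $\cA=\cA_\sigma$ for some LCH proper partial action $\sigma$ of $G$) together with a non degenerate action by adjointable maps $\cA\times \cB\to \cB$. On the other hand, we are given a non degenerate action by adjointable maps $\cB\times \cC\to \cC$.

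First I would invoke Proposition~\ref{prop:action on action} on the pair of actions $\cA\curvearrowright \cB$ and $\cB\curvearrowright \cC$. This furnishes a unique action by adjointable maps $\cA\times \cC\to \cC$, and the Proposition explicitly states that this composed action is again non degenerate. Hence $\cA$ is a basic example of a weakly proper Fell bundle acting non degenerately on $\cC$ by adjointable maps, which is exactly what Definition~\ref{defi:weakly proper Fell bundle} requires in order to declare $\cC$ weakly proper.

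There is no real obstacle here: the entire content of the corollary is the transitivity of ``being acted upon by a basic example through a non degenerate action by adjointable maps'', and all the work has already been absorbed into Proposition~\ref{prop:action on action}. The only thing one might want to remark is that we do not need the intermediate action $\cA\curvearrowright \cB$ to be of any special type beyond what the definition of weakly proper provides, since Proposition~\ref{prop:action on action} does not require anything besides non degeneracy and adjointability of the two input actions.
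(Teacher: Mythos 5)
Your proposal is correct and follows essentially the same route as the paper: unpack the definition of weak properness to get a non degenerate action of a basic example $\cA_\sigma$ on $\cB$, then apply Proposition~\ref{prop:action on action} to the pair of actions to obtain a non degenerate action by adjointable maps of $\cA_\sigma$ on $\cC$, which is exactly Definition~\ref{defi:weakly proper Fell bundle} for $\cC$. Nothing is missing.
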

\begin{proof}
 Let $\sigma$ be a LCH and proper partial action such that $\cA_\sigma$ acts on $\cB$ by a non degenerate action by adjointable maps.
 Proposition \ref{prop:action on action} gives a non degenerate action by adjointable maps of $\cA_\sigma$ on $\cC,$ thus $\cC$ is weakly proper. 
\end{proof}

\subsection{Kasparov proper Fell bundles and amenability}\label{ssec:Kasparov and amenability}

The main result in this (sub)section states that every Kasparov proper Fell bundle is amenable in the sense that it's full and reduced cross sectional C*-algebras agree.

\begin{theorem}\label{thm:map from the primitive ideal space}
 Let $\sigma$ be a LCH partial action of $G$ on $X,$ $\cB$ a Fell bundle over $G$ and $\cA_\sigma\times \cB\to \cB,\ (a,b)\mapsto a\cdot b,$ a non degenerate action by adjointable maps.
 Denote $\hat{\alpha}$ the topological partial action $G$ on $\prim(B_e)$ defined by $\cB$ (described in Section \ref{sec:weakly proper Fell bundles}). 
 If the map $\phi\colon C_0(X)\to M(B_e)$ given by $\phi(f)b=(f\delta_e)\cdot b$ is central, then there exists a continuous function $h\colon \prim(B_e)\to X$ such that
 \begin{enumerate}
  \item For all $t\in G,$ $h^{-1}(X_t)=\cO_t$ (recall $\cO_t := \{P\in \prim(B_e)\colon B_tB_t^*\nsubseteq P\}$).
  \item For all $t\in G$ and $P\in \cO_\tmu,$ $h(\tilde{\alpha}_t(P))=\tilde{\alpha}_t(h(P)).$
 \end{enumerate}
\end{theorem}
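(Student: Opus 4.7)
The map $h$ will be constructed directly from the Dauns-Hofmann transform of the central $*$-homomorphism $\phi$, and properties (1) and (2) will then be verified using the non-degeneracy condition and Lemma~\ref{lem:theta and action of Atheta}, respectively.

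\emph{Construction and continuity of $h$.} Centrality of $\phi$ puts $\phi(C_0(X))$ inside $Z(M(B_e))$, which by Dauns-Hofmann is identified with $C_b(\prim(B_e))$: for each $z \in Z(M(B_e))$ there is a continuous $\widehat{z} \in C_b(\prim(B_e))$ with $za - \widehat{z}(P) a \in P$ for all $a \in B_e$ and $P \in \prim(B_e)$. Then the assignment $\chi_P\colon C_0(X) \to \bC$, $f \mapsto \widehat{\phi(f)}(P)$, is a $*$-homomorphism, and it cannot vanish because non-degeneracy at $t = e$ reads $\phi(C_0(X)) B_e = B_e$ (otherwise one would force $B_e \subset P$). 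Hence $\chi_P$ is evaluation at a unique $h(P) \in X$, and $h$ is continuous because $f \circ h = \widehat{\phi(f)}$ is continuous on $\prim(B_e)$ for every $f \in C_0(X)$.

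\emph{Property (1).} An immediate calculation in $\cA_\sigma$ gives $A_t A_\tmu = C_0(X_t) \delta_e$, so the non-degeneracy hypothesis yields $\phi(C_0(X_t)) B_e = B_t B_\tmu = I^\cB_t$. Consequently $I^\cB_t \nsubseteq P$ iff there is $f \in C_0(X_t)$ with $f(h(P)) \neq 0$ iff $h(P) \in X_t$, giving $\cO_t = h^{-1}(X_t)$.

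\emph{Property (2).} Fix $P \in \cO_\tmu$, $g \in C_0(X_t)$ and set $f := \theta_\tmu(g) \in C_0(X_\tmu)$. Lemma~\ref{lem:theta and action of Atheta} rewrites $\phi(g)(bc) = b\,\phi(f)\,c$ for $b \in B_t$, $c \in B_e$. Left-multiplying the Dauns-Hofmann identity $\phi(f)c - f(h(P))c \in P$ by $b$, and comparing with $\phi(g)(bc) - g(h(\tilde{\alpha}_t(P)))\,bc \in \tilde{\alpha}_t(P)$, one obtains
\begin{equation*}
 \bigl( g(h(\tilde{\alpha}_t(P))) - f(h(P)) \bigr) bc \in \tilde{\alpha}_t(P) + B_t P.
\end{equation*}
The Rieffel correspondence---the very definition of $\tilde{\alpha}_t$---gives $B_t(P \cap I^\cB_\tmu) = (\tilde{\alpha}_t(P) \cap I^\cB_t) B_t \subset \tilde{\alpha}_t(P)$; combining this with the factorization $B_t = B_t I^\cB_\tmu$ and the elementary inclusion $I^\cB_\tmu P \subset P \cap I^\cB_\tmu$ upgrades the conclusion to $B_t P \subset \tilde{\alpha}_t(P)$. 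Closing under linear span in $b$ and $c$ (and using $B_t B_e = B_t$) yields
\begin{equation*}
 \bigl( g(h(\tilde{\alpha}_t(P))) - f(h(P)) \bigr) B_t \subset \tilde{\alpha}_t(P).
\end{equation*}
A non-zero scalar would force $B_t B_t^* = I^\cB_t \subset \tilde{\alpha}_t(P)$, contradicting $\tilde{\alpha}_t(P) \in \cO_t$; hence $g(h(\tilde{\alpha}_t(P))) = f(h(P)) = g(\sigma_t(h(P)))$. As $g$ ranges over $C_0(X_t)$ and both points lie in $X_t$, we conclude $h(\tilde{\alpha}_t(P)) = \sigma_t(h(P))$.

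\emph{Main obstacle.} Property (2) is the only delicate step. Two ingredients drive the argument: first, Lemma~\ref{lem:theta and action of Atheta}, which exchanges left multiplication by $\phi(g)$ for insertion of $\phi(f)$ between the factors of $bc$ (this is the one place where centrality of $\phi$ is genuinely used); and second, the reduction $B_t P \subset \tilde{\alpha}_t(P)$ via the factorization $B_t = B_t I^\cB_\tmu$, which allows the Rieffel correspondence to be invoked at a primitive-ideal level. Everything else is essentially bookkeeping with the Dauns-Hofmann formula.
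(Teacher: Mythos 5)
Your construction of $h$ via the Dauns--Hofmann theorem and your proof of (1) coincide with the paper's. For (2) you take a genuinely different route: the paper realizes $\tilde{\alpha}_t(P)$ as the kernel of the induced representation $\rho$ of $B_e$ on $B_t\otimes_{\pi_P}(B_e/P)$ and runs the computation with Lemma \ref{lem:theta and action of Atheta} inside the inner products of that module, whereas you stay at the level of ideals and invoke the Rieffel correspondence $B_t(P\cap I^\cB_\tmu)=(\tilde{\alpha}_t(P)\cap I^\cB_t)B_t$ directly. Your route avoids introducing the induced module and is arguably more economical; the paper's version buys cleaner bookkeeping because every identity is checked by evaluating through $\pi_P$ and $\rho$.

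That said, several of your displayed containments are not literally meaningful and must be restated before the argument closes. The elements $\phi(g)(bc)$, $bc$, and everything in $B_tP$ live in the fibre $B_t$, while $\tilde{\alpha}_t(P)$ is an ideal of $B_e$; so ``$\phi(g)(bc)-g(h(\tilde{\alpha}_t(P)))bc\in\tilde{\alpha}_t(P)$'' and ``$B_tP\subset\tilde{\alpha}_t(P)$'' do not typecheck. The correct target is the closed subspace $(\tilde{\alpha}_t(P)\cap I^\cB_t)B_t=B_t(P\cap I^\cB_\tmu)$ of $B_t$: using $B_t=I^\cB_tB_t$ one gets $\phi(g)x-g(h(\tilde{\alpha}_t(P)))x\in\tilde{\alpha}_t(P)B_t\subset(\tilde{\alpha}_t(P)\cap I^\cB_t)B_t$ for $x\in B_t$ (this extension of the Dauns--Hofmann identity from $B_e$ to $B_t$ is itself a small step you should record), and likewise $B_tP=B_tI^\cB_\tmu P\subset B_t(P\cap I^\cB_\tmu)$. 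With both differences landing in $(\tilde{\alpha}_t(P)\cap I^\cB_t)B_t$, a nonzero scalar forces $B_t\subset(\tilde{\alpha}_t(P)\cap I^\cB_t)B_t$, and only after multiplying on the right by $B_t^*$ do you reach $I^\cB_t\subset\tilde{\alpha}_t(P)$ and the contradiction with $\tilde{\alpha}_t(P)\in\cO_t$ --- which is in fact the inference you wrote. So the repair is mechanical, but as written the intermediate claims are false statements rather than imprecise ones, and the proof is incomplete until they are replaced by their $B_t$-valued counterparts.
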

\begin{proof}
 Recall that the fibre over $t\in G$ of $\cA_\sigma$ is $A_t:=C_0(X_t)\delta_e.$
 As done before we denote $\theta$ (instead of $\theta(\sigma)$) the C*-partial action defined by $\sigma$ on $C_0(X).$ 
 We identify $C_0(X)$ with $A_e$ canonically and think of $C_0(X_t)$ as an ideal of $A_e.$
 A direct computation shows that $A_tA_t^* = C_0(X_t).$

 The map $\phi$ is non degenerate because $\phi(C_0(X))B_e = (C_0(X)\delta_e C_0(X)\delta_e)\cdot B_e = B_e.$
 Then the Dauns-Hoffman Theorem gives a continuous map $h\colon \prim(B_e)\to X$ such that, for all $a\in C_0(X),$ $b\in B_e$ and $P\in \prim(B_e),$ $\pi_P(a\delta_e\cdot b)=a(h(P))\pi_P(b);$ where $\pi_P\colon B_e\to B_e/P$ is the canonical quotient map.
 
 Fix $t\in G.$
 Given $P\in h^{-1}(X_t),$ take $a\in C_0(X_t)$ such that $a(h(P))=1$ and $b\in B_e$ such that $\pi_P(b)\neq 0.$
 Then $\pi_P(a\delta_e\cdot c) = a(h(P))\pi_P(c)\neq 0$ and we conclude that $C_0(X_t)\delta_e \cdot B_e = A_tA_t^*\cdot B_e = B_tB_t^*$ is not contained in $P,$ meaning that $P\in \cO_t.$
 Assume, conversely, that we have $P\in \cO_t.$
 Since $C_0(X_t)\delta_e\cdot B_e=B_tB_t^*\nsubseteq P,$ there exists $a\in C_0(X_t)$ and $b\in B_e$ such that $0\neq \pi_P(a\delta_e \cdot b)=a(h(P))\pi_P(b).$
 Hence $a(h(P)) \neq 0$ and this implies $P\in X_t.$
 
 It is now time to prove claim (2).
 Take $t\in G$ and $P\in \cO_\tmu.$
 By construction (see \cite{abadie2019morita}) the representation
 \begin{equation*}
  \rho\colon B_e\to \bB(B_t\otimes_{\pi_P} (B_e/P)),\  \rho(b)(c\otimes \pi_P(d))=bc\otimes \pi_P(d),
 \end{equation*}
 has kernel $\tilde{\alpha}_t(P).$
 Then, for all $a\in C_0(X_\tmu)$ and $c\in B_e,$ we have $ \rho(a\delta_e\cdot c)=a(h(\tilde{\alpha}_t(P)))\rho(c). $
 Take $z\otimes \pi_P(w),z'\otimes \pi_P(w')\in B_t\otimes_{\pi_P}(B_e/P).$
 Recalling our inner products are linear in the second variable and using Lemma \ref{lem:theta and action of Atheta} we get
 \begin{align*}
 a(h(\tilde{\alpha}_t(P)))  \pi_P(w^* z^*c^* z'   w')
    &= a(h(\tilde{\alpha}_t(P))) \langle c (z\otimes \pi_P(w) ),z'\otimes \pi_P(w')\rangle\\
    & =   \langle \rho(a^*\delta_e \cdot c) (z\otimes \pi_P(w) ),z'\otimes \pi_P(w')\rangle\\
    & =    \pi_P( w^* (a^*\delta_e \cdot cz)^* z'   w' )\\
    & =    \pi_P( \underbrace{w^* z^*c^*}_{\in B_\tmu} a\delta_e\cdot z'   w' )\\
   & =    \pi_P( \theta_\tmu(a)\delta_e\cdot \underbrace{w^* z^*c^* z'   w'}_{\in B_e} )\\
   & =  \theta_\tmu(a)(h(P))  \pi_P(w^* z^*c^* z'   w').
 \end{align*}
 Since $\pi_P(w^* z^*c^* z'   w')$ can not be null for all $z\otimes \pi_P(w),z'\otimes \pi_P(w')\in B_t\otimes_{\pi_P}B_e/P,$ we conclude that
 \begin{equation*}
  a(h(\tilde{\alpha}_t(P))) =a(\sigma_t(h(P)))\,\ \forall a\in C_0(X_t).
 \end{equation*}
 Noticing that $h(\tilde{\alpha}_t(P)),\sigma_t(h(P))\in X_t$ and recalling that $C_0(X_t)$ separates the points of $X_t$ we deduce that $h(\tilde{\alpha}_t(P))=\sigma_t(h(P)).$
\end{proof}

\begin{corollary}\label{cor:Kasparov proper Fell bundles are amenable}
 Every Kasparov proper Fell bundle $\cB=\{B_t\}_{t\in G}$ is amenable (i.e. the canonical map $C^*(\cB)\to C^*_r(\cB)$ is injective).
 In case $\prim(B_e)$ is Hausdorff, the partial action $\tilde{\alpha}$ of $G$ on $\prim(B_e)$ is proper.
\end{corollary}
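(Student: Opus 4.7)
The plan is to use Theorem~\ref{thm:map from the primitive ideal space} as the bridge between $\cB$ and the proper partial action witnessing Kasparov-properness. By Definition~\ref{defi:weakly proper Fell bundle} we fix a LCH proper partial action $\sigma$ of $G$ on some $X$ and a non-degenerate action of $\cA_\sigma$ on $\cB$ by adjointable maps whose structure *-homomorphism $\phi\colon C_0(X)\to M(B_e)$ lands in the centre. Theorem~\ref{thm:map from the primitive ideal space} then supplies a continuous equivariant map $h\colon \prim(B_e)\to X$ such that $h^{-1}(X_t)=\cO_t$ and $h\circ\tilde{\alpha}_t=\sigma_t\circ h$ on $\cO_\tmu$.

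I would dispose of the properness assertion first, since it needs only $h$ and Proposition~\ref{prop:equivalent conditions of properness}. Assuming $\prim(B_e)$ is Hausdorff, it is automatically LCH, so $\tilde{\alpha}$ is a LCH partial action and the natural strategy is to verify criterion~(2) of Proposition~\ref{prop:equivalent conditions of properness}. Given a net $\{(t_i,P_i)\}_i\subset\Gamma_{\tilde{\alpha}}$ with $\{(\tilde{\alpha}_{t_i}(P_i),P_i)\}_i$ convergent in $\prim(B_e)\times\prim(B_e)$, one applies $h$ and uses equivariance to obtain a net $\{(t_i,h(P_i))\}_i\subset\Gamma_\sigma$ with $\{(\sigma_{t_i}(h(P_i)),h(P_i))\}_i$ convergent in $X\times X$. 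Properness of $\sigma$ (same criterion) then yields a subnet with $(t_{i_j},h(P_{i_j}))\to (t,h(P))\in\Gamma_\sigma$; in particular $t_{i_j}\to t$. Combining with the original convergence $P_{i_j}\to P$ in the Hausdorff space $\prim(B_e)$ gives $(t_{i_j},P_{i_j})\to(t,P)$, and $h(P)\in X_\tmu$ together with $h^{-1}(X_\tmu)=\cO_\tmu$ places $(t,P)$ in $\Gamma_{\tilde{\alpha}}$, as required.

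For the amenability statement I would first observe that the basic example $\cA_\sigma$ is itself amenable: by Proposition~\ref{prop:equivalent conditions of properness}, the enveloping action $\sigma^e$ is a LCH proper action; such actions are classically amenable; and the Morita equivalence between the partial and enveloping crossed products (and between their reduced counterparts) transports amenability back to $\theta(\sigma)$, hence to $\cA_\sigma$. The remaining task is to propagate amenability along the centralized action of $\cA_\sigma$ on $\cB$. A natural route is to use a Bruhat-type section of $\sigma$, pull it through the central $\phi$ to a positive element of $ZM(B_e)$, and use it to build an approximate unit of $C^*(\cB)$ consisting of compactly supported positive-type sections; this forces the canonical surjection $C^*(\cB)\to C^*_r(\cB)$ to be injective.

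The main obstacle will be precisely this last transfer step. Centrality of $\phi$ is indispensable: it allows the pulled-back partition of unity to commute past arbitrary sections of $\cB$, which is exactly what makes the averaging argument compatible with both the $C_0(X)$-structure on $B_e$ and the multiplicative structure of $\cB$. For merely weakly proper bundles (non-central $\phi$) this route breaks down, which is in line with the fact that amenability is not expected in that generality, and it is also why the proper partial action $\tilde{\alpha}$ on $\prim(B_e)$ only appears in the Kasparov (central) setting.
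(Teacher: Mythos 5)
Your treatment of the properness assertion is correct and is essentially the paper's own argument: apply $h$ to the net, use criterion (2) of Proposition~\ref{prop:equivalent conditions of properness} for $\sigma$ to extract a convergent subnet of $\{t_i\}$, and use $h^{-1}(X_\tmu)=\cO_\tmu$ together with Hausdorffness of $\prim(B_e)$ to land back in $\Gamma_{\tilde\alpha}$. No issues there.

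The amenability half, however, has a genuine gap. You correctly reduce to the observation that $\sigma^e$ is a LCH proper (hence amenable) action, but the ``transfer step'' from amenability of $\cA_\sigma$ to amenability of $\cB$ is exactly the content of the claim, and you do not carry it out: you propose pulling a Bruhat section through the central $\phi$ to build an approximate unit of positive-type compactly supported sections, and then you explicitly flag this as ``the main obstacle.'' Naming the obstacle is not the same as overcoming it --- in particular, showing that such positive-type sections converge to an approximate unit in the \emph{full} norm $\|\cdot\|_\uni$ (rather than only the reduced norm) is precisely the delicate point, and your sketch gives no argument for it. The paper takes a different and much shorter route here, and it is the one your own setup already hands you: the map $h$ from Theorem~\ref{thm:map from the primitive ideal space}, viewed as a continuous $\tilde\alpha$--$\sigma^e$ equivariant map from $\prim(B_e)$ into the enveloping space $X^e$ of the amenable action $\sigma^e$, puts $\cB$ squarely within the hypotheses of \cite[Theorem 6.3]{abadie2019morita}, which yields amenability directly. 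So you introduced the right tool ($h$) as ``the bridge'' but then set it aside for the amenability statement, where it is in fact the decisive ingredient; your Bruhat-section strategy, while plausible in spirit, would require substantial additional work (essentially reproving a version of that cited theorem) before it constitutes a proof.
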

\begin{proof}
 We can use the notation and hypotheses of Theorem \ref{thm:map from the primitive ideal space}, with the additional assumption that $\sigma$ is proper. 
 The enveloping action of $\sigma$, $\sigma^e,$ is amenable because it is LCH and proper \cite{BssEff14univ}.
 If $X^e$ is the enveloping space for $\sigma,$ then we can think of $h\colon \prim(B_e)\to X$ as a map from $\prim(B_e)$ to $X^e$ which is $\tilde{\alpha}-\sigma^e$ equivariant.
 Then $\cB$ is amenable by \cite[Theorem 6.3]{abadie2019morita}.

 Now assume $\prim(B_e)$ is Hausdorff (hence LCH).
 We will use condition (2) of Proposition~\ref{prop:equivalent conditions of properness} to show $\tilde{\alpha}$ is proper.
 Take a net $\{(t_i,P_i)\}_{i\in I}\subset \Gamma_{\tilde{\alpha}}$ such that $\{ (\tilde{\alpha}_{t_i}(P_i),P_i) \}_{i\in I}$ converges to a point $(Q,R)\in \prim(B_e)\times \prim(B_e).$
 Then $\{ (t_i,h(P_i)) \}_{i\in I}\subset \Gamma_\sigma$ and $\{ (\sigma_{t_i}(h(P_i)),h(P_i)) \}_{i\in I}$ converges to $(h(Q),h(R)).$
 There exists a subnet $\{(t_{i_j},P_{i_j})\}_{j\in J}$ such that $\{ (t_{i_j},h(P_{i_j})) \}_{i\in I}\subset \Gamma_\sigma$ converges to a point $(t,x)\in \Gamma_\sigma.$
 By construction $h(R)=\lim_j h(P_{i_j})=x\in X_\tmu.$
 Since $R\in h^{-1}(X_\tmu)=\cO_\tmu,$ we obtain $(t,R)\in \Gamma_{\tilde{\alpha}}$ is the limit of $\{(t_{i_j},P_{i_j})\}_{j\in J}.$
\end{proof}

Now we show there is not such a thing like a LCH partial action which is not proper but it's associated Fell bundle is weakly proper, what is quite comforting.

\begin{corollary}\label{cor:monsters do not exist}
 Let $\sigma$ be a LCH partial action of $G$ on $C_0(X).$
 Then $\cA_\sigma$ is a weakly proper Fell bundle if and only if $\sigma$ is proper.
\end{corollary}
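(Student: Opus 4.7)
The forward implication is the content of the definition: if $\sigma$ is proper then $\cA_\sigma$ is itself a basic example of a weakly proper Fell bundle, via the multiplication $\cA_\sigma \times \cA_\sigma \to \cA_\sigma$ viewed as a non degenerate action by adjointable maps.

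For the converse, suppose $\cA_\sigma$ is weakly proper. Then Definition~\ref{defi:weakly proper Fell bundle} provides a LCH proper partial action $\tau$ of $G$ on a space $Y$ together with a non degenerate action by adjointable maps $\cA_\tau \times \cA_\sigma \to \cA_\sigma$. The associated *-homomorphism $\phi\colon C_0(Y)\to M(C_0(X))$ has codomain a commutative C*-algebra, so $\phi$ is automatically central. Hence the hypotheses of Theorem~\ref{thm:map from the primitive ideal space} (applied with $\cB = \cA_\sigma$) are satisfied, and we obtain a continuous map $h\colon \prim(C_0(X)) \to Y$ with $h^{-1}(Y_t) = \cO_t$ and $h(\tilde{\alpha}_t(P)) = \tau_t(h(P))$ for $P \in \cO_{t^{-1}}$. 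Identifying $\prim(C_0(X))$ with $X$ in the usual way, the partial action $\tilde{\alpha}$ becomes $\sigma$ itself, and the ideals $B_t B_t^* = C_0(X_t)$ correspond to the open sets $X_t \subset X$, so $\cO_t$ becomes $X_t$. Thus $h\colon X\to Y$ is continuous, satisfies $h^{-1}(Y_t) = X_t$, and intertwines $\sigma_t$ with $\tau_t$.

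To conclude that $\sigma$ is proper, I will verify condition (2) of Proposition~\ref{prop:equivalent conditions of properness}. Take a net $\{(t_i,x_i)\}_{i\in I}\subset \Gamma_\sigma$ such that $(\sigma_{t_i}(x_i),x_i) \to (y,x)$ in $X\times X$. Applying $h$ and using equivariance plus $x_i \in X_{t_i^{-1}} = h^{-1}(Y_{t_i^{-1}})$, the net $\{(t_i, h(x_i))\}_{i\in I}$ lies in $\Gamma_\tau$ and $(\tau_{t_i}(h(x_i)), h(x_i)) = (h(\sigma_{t_i}(x_i)), h(x_i))$ converges to $(h(y), h(x))$ in $Y\times Y$. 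Since $\tau$ is proper, Proposition~\ref{prop:equivalent conditions of properness} gives a subnet $\{(t_{i_j}, h(x_{i_j}))\}_{j\in J}$ converging to some $(t,z)\in \Gamma_\tau$; in particular $t_{i_j}\to t$ in $G$. Because $x_{i_j} \to x$ in $X$ by hypothesis, we get $(t_{i_j}, x_{i_j}) \to (t,x)$ in $G\times X$, and $h(x) = \lim_j h(x_{i_j}) = z \in Y_{t^{-1}}$, so $x \in h^{-1}(Y_{t^{-1}}) = X_{t^{-1}}$. Hence $(t,x)\in \Gamma_\sigma$, which is the required subnet limit.

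The main conceptual point is recognizing that commutativity of $C_0(X)$ makes the centrality hypothesis of Theorem~\ref{thm:map from the primitive ideal space} automatic, so its full conclusion (the equivariant map $h$) is available even without assuming $\cA_\sigma$ is Kasparov proper. Once $h$ is in hand, properness of $\sigma$ is just pulled back from properness of $\tau$ through the characterization in Proposition~\ref{prop:equivalent conditions of properness}(2); no delicate estimate is needed.
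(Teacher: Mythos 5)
Your proof is correct and follows essentially the same route as the paper: the paper observes that $M(C_0(X)\delta_e)\cong C_b(X)$ is commutative, so $\cA_\sigma$ is automatically Kasparov proper, and then cites the second part of Corollary~\ref{cor:Kasparov proper Fell bundles are amenable} (whose proof is exactly the net argument you write out, via Theorem~\ref{thm:map from the primitive ideal space} and condition (2) of Proposition~\ref{prop:equivalent conditions of properness}). You have simply inlined that citation; the reverse implication via the multiplication action is also identical.
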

\begin{proof}
 The multiplier algebra of $C_0(X)\delta_e$ is (C*-isomorphic to) $C_b(X)$ and hence commutative.
 Thus the direct implication follows from Corollary \ref{cor:Kasparov proper Fell bundles are amenable} because $\tilde{\alpha} = \sigma$.
 For the converse just consider $\cA_\sigma$ acting on $\cA_\sigma$ by multiplication.
\end{proof}

\section{Fixed point algebras}

As we mentioned in the Introduction, the analysis of the basic examples of weakly proper Fell bundles plays an important role in the general theory.

\subsection{Basic examples}\label{ssec:Basic examples}

Let $\sigma$ be a LCH and proper partial action of $G$ on $X$ and denote $\theta$ the respective partial action on $C_0(X).$
The enveloping action and enveloping spaces of $\sigma$ will be denoted $\sigma^e$ and $X^e,$ respectively.
We view $C_0(X)$ as an ideal of $C_0(X^e)$ and $\theta^e:=\theta(\sigma^e)$ as the enveloping action of $\theta$ (in the C*-algebraic sense \cite{Ab03}).

The partial crossed product $C_0(X)\rtimes_\sigma G$ is, by definition, the cross sectional C*-algebra of $\cA_\sigma,$ $C^*(\cA_\sigma).$
The bundle $\cA_\sigma$ is an hereditary subbundle of $\cA_{\sigma^e}$ and thus we can view $C_0(X)\rtimes_\sigma G$ as a full hereditary C*-subalgebra of $C_0(X^e)\rtimes_{\sigma^e}G$ \cite{abadie2017applications}.
Recall from \ref{cor:Kasparov proper Fell bundles are amenable} that the full and reduced cross sectional C*-algebras are the same in the present situation.

The set of compactly supported continuous cross sections of $\cA_\sigma,$ $C_c(\cA_\sigma),$ is formed by functions of the form $f^\dagger\colon G\to \cA_\sigma,\ t\mapsto f(t)\delta_t,$ with 
\begin{equation*}
 f\in C_c^\sigma(G,C_0(X)):=\{ g\in C_c(G,C_0(X))\colon g(t)\in C_0(X_t),\ \forall \ t\in G\}.
\end{equation*}

Let's recall the construction in \cite{Rf82Green} of a $C_0(X^e/\sigma^e)-C_0(X^e)\rtimes_{\sigma^e}G-$bimodule $E_{\sigma^e}.$
We are not assuming $\sigma^e$ is free, then we will not be able to guarantee $E_{\sigma^e}$ is an equivalence bimodule.

The $\sigma^e-$orbit of a point $x\in X^e$ will be denoted $[x].$
Recall from Proposition \ref{prop:orbtit space of partial action and enveloping actions} that we may think $X/\sigma=X^e/\sigma^e$ by identifying $[x]=[x]_{\sigma^e}$ with $[x]_\sigma,$ for all $x\in X.$

Consider $E_{\sigma^e}:=C_c(X^e)$ with the pre $C_0(X/\sigma)-$left Hilbert module structure given by
\begin{equation*}
 fg(x)=f([x])g(x) \qquad \qquad \langle g,h\rangle([x]):=\int_G (g\overline{h})(\sigma^e_\tmu(x))\, dt, 
\end{equation*}
for all $f\in C_0(X/\sigma),$ $g,h\in C_c(X^e)$ and $x\in X.$

Routine computations show that the operations above are compatible and that one can complete $C_c(X^e)$ to get a left $C_0(X/\sigma)-$Hilbert module $E_{\sigma^e}.$
In fact one can use Stone-Weierstrass Theorem to show the ideal 
\begin{equation*}
 \spn\{\langle g,h\rangle \colon g,h\in C_c(X)\}
\end{equation*}
is dense in $C_c(X/\sigma)$ in the inductive limit topology.
Thus $E_{\sigma^e}$ is in fact a full left Hilbert module.

Full and reduced crossed products agree here, then one may appeal to \cite{Rf90} to describe the right $C_0(X^e)\rtimes_{\sigma^e}G$ structure of $E_{\sigma^e}$ (even if $\sigma^e$ is not free).
For $g,h\in C_c(X^e)$ and $k^\dagger \in C_c(\cA_{\sigma^e})$ (with $k\in C_c(G,C_0(X^e))$) the action and inner products are given by
\begin{align}
 fk^\dagger(x) & = \int_G f(\sigma^e_t(x)) k(t)(\sigma^e_t(x))\Delta(t)^{-1/2}\, dt\, \quad \forall \ x\in X^e.\label{equ:right action}\\
  \laa f,g\raa_{\sigma^e} (t) & = \Delta(t)^{-1/2}f^*\theta^e_t(g)\delta_t\quad  \forall \ t\in G.\label{equ:right inner product}
\end{align}

In case $\sigma^e$ is free we have $E_{\sigma^e}$ is a $C_0(X/\sigma)-C_0(X^e)\rtimes_{\sigma^e}G-$equivalence bimodule.

Our goal now is to show the closure of $C_c(X)$ in $E_{\sigma^e},$ henceforth denoted $E_\sigma,$ inherits a $C_0(X/\sigma)-C_0(X)\rtimes_\sigma G-$bimodule from $E_{\sigma^e}$.

\begin{proposition}\label{prop:alternative description of E0sigma}
 $E_\sigma$ is the closure of $E^0_\sigma:=C_c(X^e)\cap C_0(X)$  in $E_{\sigma^e}.$
\end{proposition}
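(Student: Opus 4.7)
The plan is to verify the two inclusions $\overline{C_c(X)} \subseteq \overline{E^0_\sigma}$ and $\overline{E^0_\sigma} \subseteq \overline{C_c(X)}$ inside $E_{\sigma^e}$. The first is essentially immediate: since $X$ is open in $X^e$, any $f \in C_c(X)$ has its compact support inside $X$, which remains compact in $X^e$; extending $f$ by zero yields a continuous function in $C_c(X^e)$ lying in the ideal $C_0(X) \subseteq C_0(X^e)$. Hence $C_c(X) \subseteq E^0_\sigma$, and closures respect this inclusion.

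For the nontrivial inclusion, I would first establish a uniform norm estimate for compactly supported elements: for any compact $K \subseteq X^e$ and any $h \in C_c(X^e)$ with $\mathrm{supp}(h) \subseteq K$,
\begin{equation*}
\|h\|_{E_{\sigma^e}}^2 \;=\; \sup_{[x]\in X^e/\sigma^e}\, \int_G |h(\sigma^e_{t^{-1}}(x))|^2\, dt \;\leq\; C_K\, \|h\|_\infty^2,
\end{equation*}
where $C_K < \infty$. The bound on the integral comes from properness of $\sigma^e$: the integrand is supported in $\{t : \sigma^e_{t^{-1}}(x)\in K\}$, which for any $x$ in the $\sigma^e$-orbit of $K$ is a translate of $((K,K))^{-1}_{\sigma^e}$, a set with compact closure by Proposition~\ref{prop:equivalent conditions of properness}. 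So $C_K$ may be taken to be the Haar measure of any compact neighbourhood of this set.

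Next, given $f \in E^0_\sigma$ and $\varepsilon > 0$, I would build an approximant $g \in C_c(X)$ as follows. Since $f|_X \in C_0(X)$, there is a compact $L \subseteq X$ with $|f(x)| < \varepsilon$ for $x \in X \setminus L$. Choose $\eta \in C_c(X)$ with $0 \leq \eta \leq 1$ and $\eta \equiv 1$ on $L$, and set $g := \eta f$. Then $g \in C_c(X)$, and $f - g = f(1-\eta)$ — extended by zero on $X^e \setminus X$ — is continuous on $X^e$ (this uses that $f$ itself vanishes continuously on $X^e \setminus X$, i.e.\ the fact that $f \in C_0(X)$ as an ideal of $C_0(X^e)$), has support contained in $\mathrm{supp}(f)$, and satisfies $\|f - g\|_\infty \leq \varepsilon$. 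The estimate above with $K = \mathrm{supp}(f)$ gives $\|f-g\|_{E_{\sigma^e}} \leq \sqrt{C_K}\,\varepsilon$, so $f$ lies in the closure of $C_c(X)$.

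The main delicate point is checking that $f-g$ remains continuous on $X^e$ after extending by zero: this is precisely where the hypothesis $f \in C_0(X)$ (as an ideal of $C_0(X^e)$) is used, since a generic element of $C_c(X^e)$ whose support meets $X^e\setminus X$ cannot be cut off to $C_c(X)$ continuously. The second delicate point is the properness-based $L^\infty$-to-$E_{\sigma^e}$ norm comparison, which is what converts a supremum-norm approximation into a Hilbert-module-norm approximation.
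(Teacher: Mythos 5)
Your proof is correct and follows essentially the same route as the paper: you cut $f\in C_c(X^e)\cap C_0(X)$ down to $C_c(X)$ by multiplying with a compactly supported cutoff (the paper uses an approximate unit of $C_0(X)$ contained in $C_c(X)$, you use a Urysohn function equal to $1$ where $|f|\geq\varepsilon$), and you convert the resulting sup-norm estimate into an $E_{\sigma^e}$-norm estimate by using properness of $\sigma^e$ to bound, uniformly in the orbit, the Haar measure of the set of $t$ contributing to the integral. The remaining differences (explicit $\varepsilon$-cutoff versus a limit over an approximate unit, and describing the relevant compact set as a translate of $((K,K))^{-1}$ instead of $((\supp f,K))$) are cosmetic.
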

\begin{proof}
 It suffices to show that every $f\in C_c(X^e)\cap C_0(X)$ can be approximated in $E_{\sigma^e}$ by elements of $C_c(X).$ 
 Fix $f\in C_c(X^e)\cap C_0(X)$ and take an approximate unit $\{g_i\}_{i\in I}$ of $C_0(X)$ contained in $C_c(X).$
 Since the projection of $\supp(f)$ into $X^e/\sigma^e$ is compact, there exists a compact set $K\subset X^e$ such that $f(x)=0$ if $[x]\cap K=\emptyset.$ 
 Then, for all $i\in I,$
 \begin{equation*}
  \|f- g_if\|_{E_{\sigma^e}}^2 =\sup_{x\in K} \int_G |f-g_if|^2(\sigma^e_\tmu(x))\, dt . 
 \end{equation*}
 
 The trick now is to restrict the integral over $G$ to a compact subset of $G.$
 To do this note that if $x\in K$ and $|f-g_if|^2(\sigma^e_\tmu(x))\neq 0,$ then $\sigma^e_\tmu(x)\in \supp(f).$
 Thus $t\in L:=\{s\in G\colon K\cap \sigma^e_t(\supp(f))\neq \emptyset\}$ and $L$ is compact because $\sigma^e$ is LCH and proper.
 If $\mu(L)$ is the measure of $L,$ then for all $i\in I$
 \begin{equation*}
  \|f- g_if\|_{E_{\sigma^e}}^2 \leq \mu(L)\|f-g_if\|^2_\infty 
 \end{equation*}
 The proof now follows directly by taking limit in $i.$
\end{proof}

Continuing our discussion note that $C_0(X/\sigma)E^0_\sigma\subset E^0_\sigma$ because $E^0_\sigma$ is an ideal of $C_c(X^e).$
Thus $E_\sigma$ has a natural $C_0(X/\sigma)-$left Hilbert module structure inherited from $E_{\sigma^e}.$
Note also that when we showed $E_{\sigma^e}$ is left full we actually showed $E_\sigma$ is left full.

Given $f\in E^0_\sigma,$ $k^\dagger\in C_c(\cA_\sigma)$ and $x\in X^e\setminus X$ we have, by \eqref{equ:right action}, $fk^\dagger(x)=0.$
This proves $E^0_\sigma C_c(\cA_\sigma)\subset E^0_\sigma.$
Besides, for all $f,g\in E^0_\sigma$ and $t\in G$ we have $f^*\theta^e_t(g)\in C_0(X)\cap \theta^e_t(C_0(X))=C_0(X_t).$
This implies, by \eqref{equ:right inner product}, that $\laa f,g\raa_{\sigma^e}\in C_c(\cA_\sigma).$
Then we conclude that $E_\sigma$ has a natural $C_0(X/\sigma)-C_0(X)\rtimes_\sigma G-$bimodule structure (inherited from $E_{\sigma^e}$).

The natural choice for the fixed point algebra of $\sigma,$ or $\cA_\sigma,$ is $C_0(X/G).$
To ensure it is strongly Morita equivalent to $C^*(\cA_\sigma)=C_0(X)\rtimes_\sigma G$ one needs to show the ideal generated by the $C_0(X)\rtimes_\sigma G-$valued inner products span a dense subset of $C_0(X)\rtimes_\sigma G.$
As for global actions this can be done by assuming the partial actions is free.

\begin{definition}
 A topological partial action $\tau$ of $H$ on $Y$ is free if for all $t\in G\setminus \{e\},$ $\{y\in Y_\tmu\colon \tau_t(y)=y\}=\emptyset.$
\end{definition}

In terms of topological freeness for partial actions, as defined in \cite{ExLaQg02}, a topological partial action is free if it is free considered as an action of a discrete group on a discrete space.

\begin{proposition}\label{prop:freenes}
 A topological partial action is free if and only if it's enveloping action is free.
\end{proposition}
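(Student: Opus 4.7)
The plan is to prove the two implications separately, using only the four defining properties of the enveloping action recalled above Proposition~\ref{prop:orbtit space of partial action and enveloping actions}.

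For the easy direction, assume $\tau^e$ is free and take $t\in H\setminus\{e\}$ and $y\in Y_\tmu$. Since $Y_\tmu\subset Y\subset Y^e$ and $\tau_t(y)=\tau^e_t(y)$, any fixed point of $\tau_t$ would be a fixed point of $\tau^e_t$ in $Y^e$; freeness of $\tau^e$ gives the conclusion.

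For the converse, assume $\tau$ is free and suppose, towards a contradiction, that there exist $t\in H\setminus\{e\}$ and $y\in Y^e$ with $\tau^e_t(y)=y$. Because $Y^e=\bigcup_{s\in H}\tau^e_s(Y)$, we may pick $s\in H$ such that $z:=\tau^e_{\smu}(y)$ lies in $Y$. Conjugating the identity $\tau^e_t(y)=y$ by $\tau^e_s$ produces $\tau^e_u(z)=z$ with $u:=\smu t s\neq e$. The key verification is that $z\in Y_\umu$: we already have $z\in Y$ and, since $\tau^e_u(z)=z\in Y$, the characterization $Y_\umu = Y\cap \tau^e_\umu(Y)$ gives $z\in Y_\umu$. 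But then $\tau_u(z)=\tau^e_u(z)=z$, contradicting freeness of $\tau$.

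The only potential pitfall is confusing the two different domains involved (one must be careful that $z$ belongs to the correct partial domain $Y_\umu$, not just to $Y$), but this is handled immediately by the definition $Y_\umu = Y\cap \tau^e_\umu(Y)$, so no real obstacle arises.
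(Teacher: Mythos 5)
Your proof is correct and follows essentially the same route as the paper: the easy direction is immediate because $\tau$ is a restriction of $\tau^e$, and for the converse you translate a fixed point of $\tau^e$ into $Y$ by conjugation and use the identity $Y_t=Y\cap\tau^e_t(Y)$ to land in the correct partial domain before invoking freeness of $\tau$. The only cosmetic differences are the direction of the conjugating element ($s^{-1}ts$ versus the paper's $rtr^{-1}$) and your phrasing as a contradiction rather than concluding $t=e$ directly.
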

\begin{proof}
 Assume $\tau$ is a free topological partial action of $H$ on $Y$ and let $\tau^e$ be it's enveloping action, with enveloping space $Y^e.$
 Take $y\in Y^e$ and $t\in H$ such that $\sigma^e_t(y)=y.$
 There exists $r\in H$ such that $x:=\sigma^e_r(y)\in Y.$
 Then $\sigma^e_{rt \rmu}(x) =x \in Y,$ this implies $x\in Y_{r\tmu \rmu}$ and $\sigma_{rt \rmu}(x)=x,$ thus $rt \rmu=e$ and we get $t=e.$
 The converse is trivial because $\sigma$ is a restriction of $\sigma^e.$
\end{proof}

The Morita equivalence between the fixed point algebra and the cross sectional C*-algebra is now available, at least for the basic examples of weakly proper Fell bundles coming from free partial actions.

\begin{theorem}
 Let $\sigma$ be a LCH free and proper partial action of $G$ on $X.$
 Then the bimodule $E_\sigma$ described before in this section is a $C_0(X/\sigma)-C_0(X)\rtimes_\sigma G-$equivalence bimodule.
\end{theorem}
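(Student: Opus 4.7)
The plan is to verify the only remaining axiom, right fullness, since the bimodule structure, compatibility of the two inner products, and left fullness of $E_\sigma$ have already been established prior to the statement. Writing $A:=C_0(X)\rtimes_\sigma G$ and $B:=C_0(X^e)\rtimes_{\sigma^e}G$, the task is to show that the closed subspace $J:=\overline{\spn}\{\laa f,g\raa_{\sigma^e}: f,g\in E^0_\sigma\}\subseteq A$ is all of $A$.

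I would begin with Proposition~\ref{prop:freenes}: freeness of $\sigma$ implies freeness of $\sigma^e$, and combined with properness this makes $E_{\sigma^e}$ a $C_0(X/\sigma)$-$B$-equivalence bimodule (as recorded just after \eqref{equ:right inner product}). In particular $\overline{\spn}\{\laa f,g\raa_{\sigma^e}: f,g\in C_c(X^e)\}=B$.

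The core observation would be the identity
\[
  (\phi\delta_e)\,\laa f,g\raa_{\sigma^e}\,(\psi\delta_e)\;=\;\laa\bar\phi f,\,\psi g\raa_{\sigma^e},\qquad \phi,\psi\in C_0(X),\ f,g\in E_{\sigma^e},
\]
a direct computation from $\laa f,g\raa_{\sigma^e}(t)=\Delta(t)^{-1/2}\bar f\,\theta^e_t(g)\delta_t$ and the multiplication rules in $\cA_{\sigma^e}$ (viewing $\phi\delta_e,\psi\delta_e\in B$ via the ideal inclusion $C_0(X)\subset C_0(X^e)$). Since $C_0(X)$ is an ideal of $C_0(X^e)$, for $f\in C_c(X^e)$ the product $\bar\phi f$ lies in $C_c(X^e)\cap C_0(X)=E^0_\sigma$, and likewise for $\psi g$. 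Combining the identity with the density from the previous paragraph and continuity of multiplication in $B$, I would obtain
\[(C_0(X)\delta_e)\,B\,(C_0(X)\delta_e)\;\subseteq\;J.\]

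To conclude, I would show that the closed linear span of $(C_0(X)\delta_e)\,B\,(C_0(X)\delta_e)$ is all of $A$. The inclusion in $A$ is clear: for $\phi,\psi\in C_0(X)$ and $k\in C_c(\cA_{\sigma^e})$, the section $t\mapsto\phi\,k(t)\,\theta^e_t(\psi)$ is supported in $X_t$ because $\phi$ is supported in $X=X_e$ and $\theta^e_t(\psi)$ in $X_t$. Conversely, any $k\in C_c(\cA_\sigma)$ with compact support in $\Gamma_\sigma$ factors as $k=(\phi\delta_e)\,k\,(\psi\delta_e)$ when $\phi,\psi\in C_c(X)$ are chosen to equal $1$ on the compact images of $\supp(k)$ under $(t,x)\mapsto x$ and $(t,x)\mapsto\sigma_\tmu(x)$; and an arbitrary element of $C_c(\cA_\sigma)$ supported in a fixed compact $K\subset G$ is approximable by such sections in the C*-norm of $A$, since $\{k(t):t\in K\}$ is a compact subset of $C_0(X)$ and so is uniformly vanishing at infinity. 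I expect the main obstacle to be the bookkeeping in this final approximation and the verification that the cut-down from $B$ to $A$ obtained by multiplying on both sides by $C_0(X)\delta_e$ really does exhaust $A$ (rather than producing a proper ideal of it)---this is precisely where freeness of $\sigma$ enters essentially, through the equivalence bimodule structure of $E_{\sigma^e}$ used in the second paragraph.
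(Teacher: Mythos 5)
Your argument is correct, and it follows the same overall strategy as the paper --- reduce right fullness of $E_\sigma$ to right fullness of $E_{\sigma^e}$ (available because $\sigma^e$ is free and proper, by Proposition~\ref{prop:freenes}) and then cut the resulting inner products down to $E^0_\sigma$ --- but your cutting-down mechanism is genuinely different. The paper sandwiches with $A:=C_0(X)\rtimes_\sigma G$ itself: it invokes the fact that $A$ is a full hereditary C*-subalgebra of $B:=C_0(X^e)\rtimes_{\sigma^e}G$ to write $A=\cspn\, A\laa C_c(X^e),C_c(X^e)\raa_{\sigma^e}A$, and then absorbs the outer factors into the module via the right module action, using $C_c(X^e)C_c(\cA_\sigma)\subset E^0_\sigma$. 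You instead sandwich with the multipliers $C_0(X)\delta_e$ (note these live in $M(B)$ rather than in $B$ when $G$ is not discrete --- this does not affect your computation) and absorb them by pointwise multiplication, using $C_0(X)\cdot C_c(X^e)\subset E^0_\sigma$; in exchange you must prove by hand that $\cspn\,(C_0(X)\delta_e)B(C_0(X)\delta_e)=A$, which you do with an approximate-unit argument on $C_c(\cA_\sigma)$. Your route is more self-contained, avoiding the hereditarity citation beyond the isometric inclusion $A\subset B$, at the price of that final bookkeeping. Two small imprecisions there: $\theta^e_t(\psi)$ is supported in $\sigma^e_t(X)$, not in $X_t$ (it is the product $\phi\,k(t)\,\theta^e_t(\psi)$ that lands in $C_0(X_t)$), and the exact factorization $k=(\phi\delta_e)k(\psi\delta_e)$ is only available when the values $k(t)$ are uniformly compactly supported in $X$, so the approximate-unit limit is really what carries the general case. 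Both proofs use freeness only through the fullness of $E_{\sigma^e}$ on the right.
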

\begin{proof}
 All we need to do is to show the ideal generated by the $C_0(X)\rtimes_\sigma G-$valued inner products is dense in $C_0(X)\rtimes_\sigma G.$
 We know, by Propositions \ref{prop:freenes} and \ref{prop:equivalent conditions of properness}, that $\sigma^e$ is free and proper.
 Thus $E_{\sigma^e}$ is a $C_0(X/\sigma)-C_0(X^e)\rtimes_{\sigma^e} G-$equivalence bimodule (see for example \cite{Rf82Green}).
 
 Using \eqref{equ:right action} it is straightforward to prove that $C_c(X^e)C_c(\cA_{\sigma})\subset E^0_\sigma.$
 Recalling that $C_0(X)\rtimes_\sigma G$ is a full hereditary C*-subalgebra of $C_0(X^e)\rtimes_{\sigma^e} G$ we get that
 \begin{align*}
  C_0(X)\rtimes_\sigma G
    &= \cspn \ C_0(X)\rtimes_\sigma G\laa C_c(X^e),C_c(X^e)\raa_{\sigma^e} C_0(X)\rtimes_\sigma G\\
    & = \cspn \laa E^0_\sigma,E^0_\sigma\raa_{\sigma^e}
    \subset C_0(X)\rtimes_\sigma G.
 \end{align*}
\end{proof}

\begin{notation}
 The $C_0(X)\rtimes_\sigma G-$valued inner product of $E_\sigma$ will be denoted $\laa \ ,\ \raa_\sigma.$
 By construction $\laa f,g\raa_\sigma = \laa f,g\raa_{\sigma^e}$ for all $f,g\in E_\sigma.$
\end{notation}

\subsection{General weakly proper Fell bundles}\label{ssec:general weakly proper Fell bundles}

Take now a Fell bundle $\cB=\{B_t\}_{t\in G}$ which is weakly proper with respect to the action $\cA_\sigma\times \cB\to \cB,\ (a,b)\mapsto a\cdot b,$ where $\sigma=(\{X_t\}_{t\in G},\{\sigma_t\}_{t\in G})$ is a LCH and proper partial action of $G$ on $X.$
As usual we set $\theta:=\theta(\sigma),$ $\sigma^e$ is the enveloping action of $\sigma,$ $X^e$ the enveloping space and the enveloping action of $\theta,$ $\theta^e,$ is the action on $C_0(X^e)$ defined by $\sigma^e.$

To avoid repetition, whenever we write $\sigma,$ $\cB,$ $\theta$ (and any other mathematical symbol appearing in the paragraph above) we will be implicitly assuming the situation is the one we described before.
The same will happen for objects constructed out of $\sigma,$ $\cB,$ $\theta,$ etc; like the space $E^0_\cB$ or the fixed point algebras we will construct some lines below.

Unfortunately, the construction of the fixed point algebra for $\cB$ depends on $\sigma,$ but this is no surprise because something similar happens for weakly proper actions on C*-algebras \cite{BssEff14univ}.

The map $\phi\colon C_0(X)\to M(B_e),\ \phi(f)b=f\delta_e\cdot b,$ is a non degenerate *-homomor\-phism and, since $C_0(X)$ is a C*-ideal of $C_0(X^e),$ there exists a unique extension $\phi^e$ of $\phi$ to $C_0(X^e).$
Motivated by Proposition \ref{prop:alternative description of E0sigma} we define 
\begin{equation*}
 E^0_\cB:=\{\phi^e(f)b\colon f\in C_c(X^e),\ b\in B_e\}.
\end{equation*}

For future reference we set the following.

\begin{lemma}
 $E^0_\cB$ is a subspace of $B_e$ and for all $b\in E^0_\cB$ there exists $f\in E^0_\sigma = C_c(X^e)\cap C_0(X)$ and $b'\in B_e$ such that $b = f\delta_e\cdot b'.$
\end{lemma}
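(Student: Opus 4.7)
The plan is to verify the two assertions separately, using only the non-degeneracy of $\phi$ and the local compactness of $X^e.$ Throughout, recall that $\phi^e\colon C_0(X^e)\to M(B_e)$ extends $\phi$ uniquely because $C_0(X)$ is an essential ideal of $C_0(X^e),$ and that for $f\in C_0(X)$ the actions coincide: $\phi^e(f)b=\phi(f)b=f\delta_e\cdot b.$

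First I would handle the subspace claim. Scalar multiplication is immediate since $\lambda\phi^e(f)b=\phi^e(\lambda f)b.$ For additivity, take $b_1=\phi^e(f_1)c_1$ and $b_2=\phi^e(f_2)c_2$ in $E^0_\cB$ with $f_i\in C_c(X^e)$ and $c_i\in B_e.$ Since $X^e$ is LCH, Urysohn's lemma produces $g\in C_c(X^e)$ with $g\equiv 1$ on the compact set $\supp(f_1)\cup\supp(f_2),$ so $gf_i=f_i$ and hence $\phi^e(g)b_i=\phi^e(gf_i)c_i=b_i$ for $i=1,2.$ Therefore $b_1+b_2=\phi^e(g)(b_1+b_2)\in E^0_\cB.$

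Next I would prove the factorization. Given $b=\phi^e(f_0)b_0\in E^0_\cB$ with $f_0\in C_c(X^e)$ and $b_0\in B_e,$ apply the Cohen-Hewitt factorization theorem to the non-degenerate $*$-homomorphism $\phi\colon C_0(X)\to M(B_e)$ (non-degeneracy is exactly the statement $\phi(C_0(X))B_e=B_e$ proved at the start of the proof of Theorem \ref{thm:map from the primitive ideal space}). This yields $g_0\in C_0(X)$ and $b'\in B_e$ with $b_0=\phi(g_0)b'.$ Setting $f\defeq f_0g_0,$ one checks that $f\in C_0(X^e)$ has support inside $\supp(f_0),$ which is compact in $X^e,$ and vanishes off $X$ because $g_0$ does; thus $f\in C_c(X^e)\cap C_0(X)=E^0_\sigma.$ Since $f\in C_0(X),$ we finally get $b=\phi^e(f_0)\phi(g_0)b'=\phi^e(f)b'=f\delta_e\cdot b',$ as required.

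No step is genuinely delicate; the only mild subtlety is remembering the natural identification of $C_0(X)$ with the ideal of $C_0(X^e)$ consisting of functions vanishing outside $X,$ so that the pointwise product $f_0g_0$ simultaneously inherits compact $X^e$-support from $f_0$ and the vanishing condition from $g_0.$
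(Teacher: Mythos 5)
Your argument is correct and follows essentially the same route as the paper: a common compactly supported local unit $g\in C_c(X^e)$ with $gf_i=f_i$ handles closure under linear combinations, and Cohen--Hewitt applied to the nondegenerate action of $C_0(X)$ on $B_e$ yields the factorization, with $f_0g_0\in C_c(X^e)\cap C_0(X)=E^0_\sigma.$ One small quibble: the unique extension $\phi^e$ exists because $\phi$ is a nondegenerate \Star{}homomorphism defined on the ideal $C_0(X)$ of $C_0(X^e),$ not because that ideal is essential --- in general $X$ need not be dense in $X^e$ --- but this side remark does not affect the validity of your proof.
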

\begin{proof}
 Given $b,c\in E^0_\cB$ and $\lambda \in \bC$ take $b',c'\in B_e$ and $f,g\in C_c(X^e) $ such that $b=\phi^e(f)b'$ and $c=\phi^e(g)c'.$
 Now take $h\in C_c(X^e)$ such that $hf=f$ and $hg=g.$
 Then $b+\lambda c = \phi^e(h)b + \lambda \phi^e(h)c = \phi^e(h)(b+\lambda c)\in E^0_\cB.$
 
 By Cohen-Hewitt's factorization Theorem there exists $k\in C_0(X)$ and $b''\in B_e$ such that $b'=k\delta_e\cdot b'.$
 Then $b = \phi^e(h)b' = \phi^e(h)k\delta_e \cdot b'' = (hk)\delta_e\cdot  b'',$ and $hk \in E^0_\sigma.$
\end{proof}

Now we construct the $C_c(\cB)$ valued inner product of $E^0_\cB$ using the $C_c(\cA_\sigma)-$valued inner product of $E^0_\sigma.$

\begin{proposition}\label{prop:construction of laa raa for B}
 There exists a unique function 
 \begin{equation*}
  \laa\ ,\ \raa_\cB\colon E^0_\cB\times E^0_\cB\to C_c(\cB),\ (a,b)\mapsto \laa a ,b\raa_\cB,
 \end{equation*}
 such that for all $f,g\in E^0_\sigma,$ $a,b\in B_e$ and $t\in G,$
  \begin{equation}\label{equ:Cc(B) valued inner product}
   \laa f\delta_e\cdot a ,g\delta_e\cdot b\raa_\cB(t) = a^*( \laa f,g\raa_\sigma(t)\cdot b ).
  \end{equation}
  Moreover,
 \begin{enumerate}
  \item $\laa\ ,\ \raa_\cB$ is linear in the second variable.
  \item For all $a,b\in E^0_\cB,$ $\laa a ,b\raa_\cB^*=\laa b ,a\raa_\cB.$
  \item Given $f,g\in C_0(X^e)$ and a net $\{(a_j,b_j)\}_{j\in J}\subset B_e\times B_e$ converging to $(a,b)\in B_e\times B_e,$ the net $\{\laa \phi^e(f)a_j , \phi^e(g)b_j\raa_\cB\}_{j\in J}$ converges to $\laa \phi^e(f)a , \phi^e(f)b\raa_\cB$ in the inductive limit topology of $C_c(\cB).$
 \end{enumerate}
\end{proposition}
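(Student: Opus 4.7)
The plan is to define $\laa\ ,\ \raa_\cB$ via formula~\eqref{equ:Cc(B) valued inner product} on ``simple tensor'' pairs $(f\delta_e\cdot a,g\delta_e\cdot b)$, extend sesquilinearly to arbitrary finite sums, and then show the result descends to a well-defined function on $E^0_\cB\times E^0_\cB$. Uniqueness is automatic once existence is proved, because the preceding Lemma says every element of $E^0_\cB$ has the form $f\delta_e\cdot a$ with $f\in E^0_\sigma$, $a\in B_e$. Also, $a^*(\laa f,g\raa_\sigma(t)\cdot b)$ indeed lies in $B_t$ and depends continuously on $t$ with compact support, by joint continuity of the action and $\laa f,g\raa_\sigma\in C_c(\cA_\sigma)$, so the right-hand side lies in $C_c(\cB)$.

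The main obstacle I expect is well-definedness in the first variable: given $\sum_i f_i\delta_e\cdot a_i=0$ in $B_e$, I must show $\sum_i a_i^*(\laa f_i,g\raa_\sigma(t)\cdot b)=0$ for every $g\in E^0_\sigma$, $b\in B_e$, $t\in G$. The idea is to use condition~(\ref{item: defi action by adjointable map}) of Definition~\ref{defi:action of Fell bundle by adjointable maps} together with associativity to observe that, for any $h\in C_0(X_t)$,
\begin{equation*}
 \sum_i a_i^*\bigl((f_i^*h\delta_t)\cdot b\bigr)=\Bigl(\sum_i f_i\delta_e\cdot a_i\Bigr)^{\!*}\!\bigl((h\delta_t)\cdot b\bigr)=0.
\end{equation*}
The subtle point is that $\theta^e_t(g)\in C_0(X^e)$ is generally not in $C_0(X_t)$, so I cannot simply set $h=\theta^e_t(g)$ above. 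The key geometric observation that rescues the argument is $\mathrm{supp}(\theta^e_t(g))\cap X\subset \sigma^e_t(X)\cap X=X_t$, so the restriction $h:=\theta^e_t(g)|_X$ (extended by zero to $X^e$) lies in $C_c(X_t)$ and satisfies $f^*h=f^*\theta^e_t(g)$ for every $f\in C_0(X)$. Using this $h$ closes the argument. Sesquilinearity on simple tensors together with well-definedness then yields existence of $\laa\ ,\ \raa_\cB$ and statement~(1).

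For~(2), I combine condition~(\ref{item: defi action by adjointable map}) with the fact that $\laa\ ,\ \raa_\sigma$ is the right Hilbert-module inner product of $E_{\sigma^e}$, so $\laa f,g\raa_\sigma(\tmu)^*=\laa g,f\raa_\sigma(t)$; then a direct computation gives $\laa f\delta_e\cdot a,g\delta_e\cdot b\raa_\cB^*(t)=b^*(\laa g,f\raa_\sigma(t)\cdot a)=\laa g\delta_e\cdot b,f\delta_e\cdot a\raa_\cB(t)$, and well-definedness in the second slot drops out by taking adjoints. Finally, for~(3), properness of $\sigma^e$ ensures that $K:=\{t\in G:\sigma^e_t(\mathrm{supp}(g))\cap\mathrm{supp}(f)\neq\emptyset\}$ is compact. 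Factoring $a=\phi(u)a_0$, $b=\phi(v)b_0$ with $u,v\in C_c(X)$ via Cohen-Hewitt, I rewrite $\phi^e(f)a=(fu)\delta_e\cdot a_0$ and $\phi^e(g)b=(gv)\delta_e\cdot b_0$, reducing the computation to~\eqref{equ:Cc(B) valued inner product} and yielding both support in $K$ and a bound $\|\laa\phi^e(f)a,\phi^e(g)b\raa_\cB(t)\|\leq C\|a\|\|b\|$ uniform in $t\in K$. A bilinear decomposition of $\laa\phi^e(f)a_j,\phi^e(g)b_j\raa_\cB-\laa\phi^e(f)a,\phi^e(g)b\raa_\cB$ into terms involving $a_j-a$ and $b_j-b$ then produces uniform convergence on $K$, that is, inductive-limit convergence in $C_c(\cB)$.
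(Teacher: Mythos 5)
Your overall strategy (define $\laa\ ,\ \raa_\cB$ by formula \eqref{equ:Cc(B) valued inner product}, then verify (1)--(3) by direct computation) is the paper's, and your treatments of uniqueness, of item (2), and of item (3) are essentially the ones given there. The genuine problem is in your well-definedness argument. You set $h:=\theta^e_t(g)|_X$, extended by zero, and claim $h\in C_c(X_t)$, so that $h\delta_t$ is an element of $\cA_\sigma$ and $(h\delta_t)\cdot b$ makes sense. This is false in general: $\theta^e_t(g)$ vanishes on $X^e\setminus\sigma^e_t(X)$, but it has no reason to vanish on $\partial X\cap\sigma^e_t(X)$, so its restriction to $X$ need not vanish at infinity of $X$ and hence need not lie in $C_0(X)$, let alone in the ideal $C_0(X_t)$. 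Concretely, let $G=\bR$ act on $X^e=\bR$ by translation and let $X=(0,\infty)$, so that $X_t=(\max(0,t),\infty)$; for $t=-1$ and $g\in C_c((0,2))\subset E^0_\sigma$ with $g(1)=1$ one has $\theta^e_{-1}(g)(x)=g(x+1)$ and $h(x)\to 1$ as $x\to 0^+$, so $h\notin C_0((0,\infty))=C_0(X_{-1})$. Consequently $(h\delta_t)\cdot b$ is undefined and the identity $\sum_i a_i^*\bigl((f_i^*h\delta_t)\cdot b\bigr)=\bigl(\sum_i f_i\delta_e\cdot a_i\bigr)^*\bigl((h\delta_t)\cdot b\bigr)$ cannot be invoked as stated. (The products $f_i^*\theta^e_t(g)$ themselves \emph{do} lie in $C_0(X_t)$, because $f_i\in C_0(X)$ kills the boundary; it is only your intermediate factor $h$ that escapes the algebra.)

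The repair is the approximate-unit device used in the paper: with $\{u_i\}_{i\in I}$ an approximate unit of $C_0(X_t)$ one has $f^*\theta^e_t(g)\delta_t=\lim_i (f\delta_e)^*(u_i\delta_e)^*(u_i\delta_t)(g\delta_e)$ inside $\cA_\sigma$, every factor now being a legitimate element of the bundle; condition (\ref{item: defi action by adjointable map}) of Definition~\ref{defi:action of Fell bundle by adjointable maps} then moves $(f\delta_e)^*$ across the action, the expression collapses to one depending only on $f\delta_e\cdot a$ and $g\delta_e\cdot b$, and one passes to the limit at the end. (You cannot avoid the limit by choosing a single $u\in C_0(X_t)$ with $uf_i^*=f_i^*$, since $\supp(f_i)$ may touch $\partial X$.) Note also that the paper checks independence of the factorization directly on single elements $f\delta_e\cdot a=h\delta_e\cdot c$ rather than on finite sums; this suffices because the preceding Lemma exhibits every element of $E^0_\cB$ in that form, so no sesquilinear extension is needed.
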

\begin{proof}
 To show existence take $f,g,h,k\in E^0_\sigma$ and $a,b,c,d\in B_e$ such that $f\delta_e\cdot a=h\delta_e\cdot c$ and $g\delta_e\cdot b=k\delta_e\cdot d.$
 Fix $t\in G$ and take an approximate unit of $C_0(X_t),$ $\{u_i\}_{i\in I}.$
 Then,
 \begin{align*}
  a^*( \laa f,g\raa_\sigma(t)\cdot b )
   & = \Delta(t) a^*( f^*\theta^e_t(g)\delta_t\cdot b )
     = \lim_i \Delta(t) a^*( (u_if)^*\theta^e_t(\theta_\tmu(u_i)g)\delta_t\cdot b )\\
   & = \lim_i \Delta(t) a^*( (f\delta_e)^*\cdot (u_i\delta_e)^*\cdot (u_i\delta_t)\cdot  (g\delta_e)\cdot b )\\
   & = \lim_i \Delta(t) (f\delta_e\cdot a)^*( (u_i^2\delta_t)\cdot  (g\delta_e)\cdot b ) \\
   & = \lim_i \Delta(t) (h\delta_e\cdot c)^*( (u_i^2\delta_t)\cdot  (k\delta_e)\cdot d )
     = c^*( \laa h,k\raa_\sigma(t)\cdot d ).
 \end{align*}
 The identities above imply formula \eqref{equ:Cc(B) valued inner product} can actually be used as a definition and can  also be used to show that $\laa \ ,\ \raa_\cB$ is linear in the second variable.
 
 The following identities prove claim (2):
 \begin{align*}
  \laa f\delta_e\cdot a,g\delta_e\cdot b\raa_\cB^*(t)
    & = \Delta(t)^{-1}\laa f\delta_e\cdot a,g\delta_e\cdot b\raa_\cB(\tmu)^*\\
    & = \Delta(t)^{-1}[a^* (\laa f,g\raa_\sigma (\tmu)\cdot b)]^*\\
    & = \Delta(t)^{-1}(\laa f,g\raa_\sigma (\tmu)\cdot b)^*a \\
    &  =  b^*(\Delta(t)^{-1}\laa f,g\raa_\sigma (\tmu)^*\cdot a)\\
    & =  b^*(\laa g,f\raa_\sigma (t)\cdot a)
      = \laa g\delta_e\cdot b,f\delta_e\cdot a\raa_\cB(t).
 \end{align*}
 
 Before proving claim (3) we develop an alternative way of computing $\laa x,y\raa_\cB(t),$ for $x,y\in E^0_\cB$ and $t\in G.$
 Take an approximate unit of $C_0(X),$ $\{u_i\}_{i\in I},$ and factorizations $x=f\cdot a$ and $y=g\cdot b$ with $f,g\in E^0_\sigma$ and $a,b\in B_e.$
 Then
 \begin{multline}\label{equ:limit of laa raa for net in C(X)}
  \lim_i \laa u_i\cdot x,u_i\cdot y\raa_\cB(t)
     = \lim_i \laa u_if\cdot a,u_ig\cdot b\raa_\cB(t)\\
     = \lim_i \Delta(t)^{-1}a^* (\laa u_if,u_ig\raa_\sigma (\tmu)\cdot b)\\
     = \lim_i \Delta(t)^{-1} a^* (f^*\theta^e_t(g)u_i\theta^e_t(u_i)\delta_t\cdot b)
     = \laa x,y\raa_\cB(t),
 \end{multline}
 where the last identity holds because $\{u_i\theta^e_t(u_i)\}_{i\in I}$ is an approximate unit to $C_0(X_t)$ and $f^*\theta^e_t(g)\in C_0(X_t).$

 Now take a net $\{(a_j,b_j)\}_{j\in J}\subset B_e\times B_e$ and $f,g\in C_0(X^e)$ as in claim (3).
 Using \eqref{equ:limit of laa raa for net in C(X)} we deduce that, for all $j\in J,$ $\supp \laa f\delta_e\cdot a_j,g\delta_e\cdot b_j\raa_\cB\subset \supp \laa f,g\raa_{\sigma^e}.$
 Thus it suffices to prove $\{\laa f\delta_e\cdot a_j,g\delta_e\cdot b_j\raa_\cB\}_{j\in J}$ converges uniformly to $\laa f\delta_e\cdot a_j,g\delta_e\cdot b_j\raa_\cB.$
 Again by \eqref{equ:limit of laa raa for net in C(X)} we have, for all $t\in G,$
 \begin{multline*}
  \| \laa f\delta_e\cdot a_j,g\delta_e\cdot b_j\raa_\cB(t)-\laa f\delta_e\cdot a,g\delta_e\cdot b\raa_\cB(t) \| \leq \\
    \leq \|  \laa f\delta_e\cdot (a_j-a),g\delta_e\cdot b_j\raa_\cB(t)\| + \|\laa f\delta_e\cdot a,g\delta_e\cdot (b_j-b)\raa_\cB(t) \| \\
      \leq \|a_j-a\| \|b_j\|\|\laa f,g\raa_{\sigma^e}\|_\infty + \|a\|\|b_j-b\|\|\laa f,g\raa_{\sigma^e}\|_\infty.
 \end{multline*}
 It is then straightforward to show that 
 \begin{equation*}
 \lim_j \| \laa f\delta_e\cdot a_j,g\delta_e\cdot b_j\raa_\cB-\laa f\delta_e\cdot a,g\delta_e\cdot b\raa_\cB \|_\infty=0 
 \end{equation*}
\end{proof}

Our intention is to use $\laa \ ,\ \raa_\cB$ as a $C^*(\cB)-$valued inner product and construct a Hilbert module with it. 
To do so we will need to show $\laa \ ,\ \raa_\cB$ is positive.

\begin{lemma}\label{lem:extension of representation}
 Consider two Fell bundles over $G,$ $\cC=\{C_t\}_{t\in G}$ and $\cD=\{D_t\}_{t \in G},$ and an non degenerate action by adjointable maps $\cC\times \cD\to \cD,\ (c,d)\mapsto c\cdot d.$
 Then for every non degenerate *-representation $T\colon \cD\to \bB(V)$ there exists a unique *-representation $\hat{T} \colon \cC\to \bB(V)$ such that $\hat{T}_c T_d\xi = T_{c\cdot d}\xi,$ for all $(c,d,\xi)\in \cC\times \cD\times V.$
 Moreover, $\hat{T}$ is non degenerate.
\end{lemma}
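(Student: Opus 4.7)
The plan is to build $\hat T$ fibrewise by prescribing its action on the subspace $W:=\spn\{T_d\xi\colon d\in\cD,\ \xi\in V\}$ of $V$, which is dense by the non-degeneracy of $T$. On $W$ the identity in the statement forces us to put
\[
 \hat T_c\bigl(\textstyle\sum_i T_{d_i}\xi_i\bigr):=\sum_i T_{c\cdot d_i}\xi_i,
\]
and so the whole question reduces to showing that this prescription is well defined, is bounded by $\|c\|$, and that the resulting field $c\mapsto \hat T_c$ assembles into a non-degenerate *-representation of $\cC$.

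The core computation, and what I expect to be the main obstacle, is the boundedness/well-definedness estimate. For $c\in C_s$, using adjointability and associativity of the action one gets
\[
 \bigl\|\textstyle\sum_i T_{c\cdot d_i}\xi_i\bigr\|^2
  =\sum_{i,j}\langle\xi_i,T_{(c\cdot d_i)^*(c\cdot d_j)}\xi_j\rangle
  =\sum_{i,j}\langle\xi_i,T_{d_i^*(c^*c\cdot d_j)}\xi_j\rangle.
\]
The bridge to the bound $\|c\|^2$ is the *-homomorphism $\phi\colon C_e\to M(D_e)$ determined by $\phi(x)y=x\cdot y$. Via Cohen--Hewitt factorisation $d=ey$ (with $e\in D_e$) and item (\ref{item: B linear}) of the proposition following Definition~\ref{defi:action of Fell bundle by adjointable maps}, I would check that $x\cdot d=\phi(x)d$ for every $x\in C_e$ and $d\in\cD$. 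Letting $\overline T\colon M(D_e)\to\bB(V)$ be the canonical extension of the non-degenerate representation $T|_{D_e}$, the double sum above rewrites as
\[
 \Bigl\langle\textstyle\sum_i T_{d_i}\xi_i,\,\overline T(\phi(c^*c))\sum_j T_{d_j}\xi_j\Bigr\rangle.
\]
Since $\phi$ is a *-homomorphism, $\phi(c^*c)\geq 0$ in $M(D_e)$ with $\|\phi(c^*c)\|\leq\|c\|^2$, so $\overline T(\phi(c^*c))\leq\|c\|^2\id_V$, which yields $\|\hat T_c(w)\|\leq\|c\|\|w\|$ on $W$. Well-definedness is then automatic from the bound, and $\hat T_c$ extends uniquely to a bounded operator on all of $V$ with $\|\hat T_c\|\leq\|c\|$.

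Once $\hat T$ is in hand, the remaining conditions will be routine to verify by evaluation on $W$: linearity of $c\mapsto\hat T_c$ on each fibre is immediate; the product rule $\hat T_{cc'}=\hat T_c\hat T_{c'}$ follows from associativity of the action; the involution law $(\hat T_c)^*=\hat T_{c^*}$ is precisely adjointability $(c\cdot d)^*d'=d^*(c^*\cdot d')$; and strong continuity of $c\mapsto\hat T_c$ follows from the continuity axiom of the action together with the uniform fibrewise bound $\|\hat T_c\|\leq\|c\|$ via \cite[II 13.12]{FlDr88}. Uniqueness of $\hat T$ is clear since any candidate is determined by its values on the dense subspace $W$. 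Finally, the non-degeneracy hypothesis on the action, specialised to $t=e$ in Definition~\ref{defi:action of Fell bundle by adjointable maps}, gives $C_e\cdot D_e=D_e$, whence
\[
 \hat T_{C_e}T_{D_e}V=T_{C_e\cdot D_e}V=T_{D_e}V,
\]
a dense subspace of $V$ by non-degeneracy of $T$; hence $\hat T$ is non-degenerate.
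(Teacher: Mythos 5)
Your proposal is correct and follows essentially the same route as the paper: define $\hat T_c$ on the dense span of $\{T_d\xi\}$, and obtain well-definedness and the bound $\|\hat T_c\|\leq\|c\|$ from the adjointability axiom together with the \Star homomorphism $C_e\to M(D_e)$ and positivity. The only cosmetic difference is that the paper realizes $\|c\|^2\,\id-\overline T(\phi(c^*c))$ as a sum of squares via the square root of $\|c\|^2-c^*c$ in $M(C_e)$, whereas you invoke the operator inequality $\overline T(\phi(c^*c))\leq\|c\|^2\,\id$ directly; these are equivalent.
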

\begin{proof}
 Fix $c\in \cC,$ $d_1,\ldots,d_n\in D_e$ and $\xi_1,\ldots,\xi_n\in V.$
 Let $w$ be a square root of $\|c\|^2-c^*c\in M(C_e).$
 Using the arguments of the proof of Proposition \ref{prop:action on action} we get
 \begin{align*}
 \|c\|^2\| \sum_{i=1}^n T_{d_i}\xi_i \| -  \|\sum_{i=1}^n T_{c\cdot d_i}\xi_i \|^2
    & = \sum_{i,j=1}^n \|c\|^2\langle T_{d_i}\xi_i,T_{d_j}\xi_j\rangle- \langle T_{c\cdot d_i}\xi_i,T_{c\cdot d_j}\xi_j\rangle\\
    & = \sum_{i,j=1}^n \langle \xi_i,T_{\|c\|^2d_j^*d_j - d_j^*cc^*d_j }\xi_j\rangle\\
    & = \sum_{i,j=1}^n \langle \xi_i,T_{d_j^*w^*wd_j}\xi_j\rangle
      = \|\sum_{i=1}^n T_{wd_i}\xi_i\|\geq 0.
 \end{align*} 
 Since the restriction $T|_{D_e}$ is non degenerate, the inequalities above imply there exists a unique operator $\hat{T}_c\in \bB(V)$ such that $\hat{T}_c T_d\xi = T_{c\cdot d}\xi,$ for all $d\in D_e$ and $\xi\in V.$
 Given any $d\in \cD,$ taking an approximate unit $\{d_j\}_{j\in J}$ of $D_e,$ it follows that $\hat{T}_c T_d \xi =\lim_j \hat{T}_c T_{d_j} T_d \xi=\lim_j T_{c\cdot d_j\cdot d} \xi  = T_{c\cdot d}\xi.$

 Having defined the operators $\hat{T}_c$ (for all $a\in \cC$) we leave the rest of the proof to the reader.
\end{proof}

\begin{lemma}\label{lem:zero inner product}
 For all $x\in E^0_\cB,$ $\laa x,x\raa_\cB\geq 0$ in $C^*(\cB).$
 Moreover, $\laa x,x\raa_\cB = 0$ if and only if $x=0.$
\end{lemma}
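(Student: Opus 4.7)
The plan is to reduce positivity of $\laa x, x\raa_\cB$ to the already known positivity of $\laa f,f\raa_\sigma$ in $C^*(\cA_\sigma)$ via a representation-theoretic computation, and then derive the faithfulness claim by pushing down to the unit fibre through the canonical conditional expectation on the reduced C*-algebra.

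First I would factor $x = f\delta_e\cdot a$ with $f\in E^0_\sigma$ and $a\in B_e$, which is legitimate by the Cohen--Hewitt type lemma preceding Proposition \ref{prop:construction of laa raa for B}. Let $T\colon\cB\to\bB(V)$ be an arbitrary non-degenerate \Star representation and let $\hat T\colon\cA_\sigma\to\bB(V)$ be the non-degenerate \Star representation produced by Lemma \ref{lem:extension of representation}, which satisfies $\hat T_c T_d=T_{c\cdot d}$. Write $T$ and $\hat T$ also for the integrated forms on $C^*(\cB)$ and $C^*(\cA_\sigma)$. Using formula \eqref{equ:Cc(B) valued inner product}, the fact that $T$ is multiplicative on $\cB$ (so $T_{a^*c}=T_a^*T_c$ for $c\in B_t$), and the intertwining property of $\hat T$, one obtains
\[
T(\laa x,x\raa_\cB)
 =\int_G T_{a^*(\laa f,f\raa_\sigma(t)\cdot a)}\,dt
 = T_a^*\int_G \hat T_{\laa f,f\raa_\sigma(t)}\,dt\,T_a
 = T_a^*\hat T(\laa f,f\raa_\sigma) T_a.
\]
Since $E_\sigma$ is a right Hilbert $C^*(\cA_\sigma)$-module (Subsection \ref{ssec:Basic examples}), $\laa f,f\raa_\sigma\geq 0$ in $C^*(\cA_\sigma)$, hence $\hat T(\laa f,f\raa_\sigma)\geq 0$ in $\bB(V)$ and therefore $T(\laa x,x\raa_\cB)\geq 0$. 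As every \Star representation of $C^*(\cB)$ arises this way, $\laa x,x\raa_\cB\geq 0$ in $C^*(\cB)$.

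For the final assertion, evaluate at the unit: since $\laa f,f\raa_\sigma(e)=f^*f\delta_e$, formula \eqref{equ:Cc(B) valued inner product} gives
\[
\laa x,x\raa_\cB(e)=a^*(f^*f\delta_e\cdot a)=a^*\phi(f^*f)a=x^*x.
\]
Let $\Lambda\colon C^*(\cB)\to C^*_\red(\cB)$ be the canonical \Star homomorphism and $E\colon C^*_\red(\cB)\to B_e$ the canonical faithful conditional expectation; on $C_c(\cB)\subset C^*_\red(\cB)$ the map $E$ is given by evaluation at $e$. Thus $E\circ\Lambda(\laa x,x\raa_\cB)=x^*x$, and $\laa x,x\raa_\cB=0$ in $C^*(\cB)$ immediately forces $x^*x=0$, hence $x=0$.

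The only genuinely delicate point is the interchange of integral and representation carried out in the first display, which rests on the continuity of $\hat T$ together with the inductive-limit continuity of $\laa\,,\,\raa_\cB$ established in Proposition \ref{prop:construction of laa raa for B}(3); everything else is routine manipulation of the integrated forms of Fell bundle representations.
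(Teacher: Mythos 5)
Your proof is correct and follows essentially the same route as the paper: factor $x=f\delta_e\cdot a$, use Lemma \ref{lem:extension of representation} to rewrite the integrated form of $\laa x,x\raa_\cB$ as $T_a^*\widetilde{\hat{T}}_{\laa f,f\raa_\sigma}T_a$ and invoke positivity of $\laa f,f\raa_\sigma$ in $C^*(\cA_\sigma)$, then evaluate at $e$ to get $x^*x=\laa x,x\raa_\cB(e)$. The only difference is cosmetic: the paper works with vector states of one faithful representation and leaves implicit the injectivity of $C_c(\cB)\to C^*(\cB)$ used in the second claim, which you justify explicitly via the regular representation and the faithful conditional expectation onto $B_e$.
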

\begin{proof}
 Take a faithful and non degenerate *-representation $T\colon \cB\to \bB(V)$ with faithful integrated form $\tilde{T}\colon C^*(\cB)\to \bB(V).$
 Let $\hat{T}\colon \cA_\sigma\to \bB(V)$ be *-representation given by Lemma \ref{lem:extension of representation} and take $f\in E^0_\sigma$ and $b\in B_e$ such that $x=f\delta_e\cdot b.$
 Then, for all $\xi\in V,$
 \begin{equation}\label{equ:T and That for inner product}
 \begin{split}
  \langle \tilde{T}_{\laa x,x\raa_\cB}\xi,\xi\rangle
   & = \int_G  \langle T_{\laa x,x\raa_\cB(t)}\xi,\xi\rangle\, dt
     = \int_G  \langle T_{b^*(\laa f,f\raa_\sigma(t)\cdot b)}\xi,\xi\rangle\, dt\\
   & = \int_G  \langle \hat{T}_{\laa f,f\raa_\sigma(t)}T_b\xi,T_b\xi\rangle\, dt
     = \langle \tilde{\hat{T}}_{\laa f,f\raa_\sigma} T_b\xi,T_b\xi\rangle \geq 0,
 \end{split}
 \end{equation}
 where $\tilde{\hat{T}}$ is the integrated form of $\hat{T}$ and the last inequality holds because $\laa f,f\raa_\sigma\geq 0$ in $C^*(\cA_\sigma).$
 
 In case $\laa x,x\raa_\cB=0,$ $x^*x = \laa x,x\raa_\cB(e)=0$ and this implies $x=0.$
 The converse is immediate.
\end{proof}

Now we define an action $\diamond$ of $C_c(\cB)$ on $E^0_\cB$ on the right.

\begin{lemma}\label{lem:construction of the action of cCgamma}
 For each $x\in E^0_\cB$ and $f\in C_c(\cB)$ there exists a unique function $x\triangleleft f\in C_c(G,B_e)$ such that given any approximate unit $\{u_i\}_{i\in I}$ of
 \begin{equation*}
  C_0^\sigma(G,X):=\{f\in C_0(G,C_0(X))\colon f(t)\in C_0(X_t),\ \forall \ t\in G\},
 \end{equation*}
 the net $\{r\mapsto u_i(r^{-1})\delta_{r^{-1}} \cdot x f(r)\}_{i\in I}\subset C_c(G,B_e)$ converges to $x\triangleleft f$ in the inductive limit topology.
 Besides, 
 \begin{equation}\label{equ:the action on the right}
  x\diamond f :=\int_{G}  \Delta(r)^{-1/2}x\triangleleft f(r)\, dr \in E^0_\cB.
 \end{equation}
\end{lemma}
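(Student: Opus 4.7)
The plan is to use the factorization $x = f_0 \delta_e \cdot b$ with $f_0 \in E^0_\sigma = C_c(X^e) \cap C_0(X)$ and $b \in B_e$ granted by the preceding lemma. Bilinearity and associativity of the action reduce the general term of the net to
\[
u_i(r^{-1}) \delta_{r^{-1}} \cdot x f(r) = \bigl(\theta_{r^{-1}}(\theta_r(u_i(r^{-1}))\, f_0)\,\delta_{r^{-1}}\bigr) \cdot b f(r),
\]
the inner computation being the Fell-bundle product in $\cA_\sigma$. For each fixed $r$, writing $b f(r) = u_r \delta_r \cdot w_r$ by non-degeneracy and collapsing $(\,\cdot\,\delta_{r^{-1}})(u_r \delta_r)$ in $\cA_\sigma$ turns the right-hand side into $\theta_{r^{-1}}(\theta_r(u_i(r^{-1}))\, f_0 u_r)\, \delta_e \cdot w_r$. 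Since $\{u_i\}$ is an approximate unit of $C_0^\sigma(G, X)$, the net $\theta_r(u_i(r^{-1}))$ acts on compactly-supported elements of $C_0(X_r)$ as an approximate unit; in particular $\theta_r(u_i(r^{-1}))\, f_0 u_r \to f_0 u_r$ in $C_0(X_r)$. I then define $x \triangleleft f(r)$ as this common pointwise limit, namely $\theta_{r^{-1}}(f_0 u_r)\, \delta_e \cdot w_r$; uniqueness of limits makes it independent of the chosen factorization of $x$.

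For the topological claims, properness of $\sigma^e$ applied to the compact set $\supp f_0 \subset X^e$ ensures that $K := \{r \in G : \sigma^e_r(\supp f_0) \cap X \neq \emptyset\}$ is compact, and the candidate vanishes outside $K \cap \supp f$. Continuity in $r$ follows from joint continuity of the action together with a Cohen--Hewitt-style argument producing a continuous factorization $b f(r) = u_r \delta_r \cdot w_r$ locally in $r$. For inductive-limit convergence, the norm inequality $\|a \cdot c\| \leq \|a\|\|c\|$ reduces the task to uniform convergence of $\theta_r(u_i(r^{-1}))\, f_0 u_r \to f_0 u_r$ as $r$ ranges over $K \cap \supp f$, and this is precisely the approximate-unit property of $\{u_i\}$ evaluated on the compact subset $\{(r^{-1}, \sigma_{r^{-1}}(y)) : r \in K \cap \supp f,\ y \in \supp f_0 \cap X_r\}$ of the topological graph of $\sigma$ (compact by properness).

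For membership in $E^0_\cB$, the integral in \eqref{equ:the action on the right} converges in $B_e$ because the integrand lies in $C_c(G, B_e)$. Pick $g \in C_c(X^e)$ equal to $1$ on the compact set $\bigcup_{r \in K \cap \supp f} \sigma^e_{r^{-1}}(\supp f_0)$. The pointwise description $x \triangleleft f(r) = \phi(\theta_{r^{-1}}(f_0 u_r))\, w_r$ gives $x \triangleleft f(r) \in \phi(C_0(X_{r^{-1}})) B_e$, and $g \cdot \theta_{r^{-1}}(f_0 u_r) = \theta_{r^{-1}}(f_0 u_r)$ in $C_0(X^e)$ forces $\phi^e(g)(x \triangleleft f(r)) = x \triangleleft f(r)$ for every $r$ in the relevant compact set. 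Pulling $\phi^e(g)$ outside the norm-convergent integral then gives $x \diamond f = \phi^e(g) \int_G \Delta(r)^{-1/2}\,(x \triangleleft f(r))\, dr \in \phi^e(C_c(X^e)) B_e = E^0_\cB$.

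The main obstacle is the uniform-convergence step: converting ``$\{u_i\}$ is an approximate unit of $C_0^\sigma(G, X)$'' into uniform convergence on a precisely identified compact subset of the graph of $\sigma$ requires threading $\supp f_0$ through the action by means of properness of $\sigma^e$, and it is here that the LCH-and-proper hypothesis on $\sigma$ does the real work; the remaining items are standard consequences of the bounded-action inequality, joint continuity of the action, Cohen--Hewitt, and the description of approximate units on commutative C*-algebras.
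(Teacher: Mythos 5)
Your overall architecture is sound, and the last step (cutting down with a function $g\in C_c(X^e)$ that is $1$ on $\bigcup_{r}\sigma^e_{\rmu}(\supp f_0)$ to conclude $x\diamond f\in E^0_\cB$) is essentially the paper's argument. The gap is in the uniform-convergence step, and it is a genuine one. You factor $bf(r)=u_r\delta_r\cdot w_r$ separately for each $r$ and then assert that uniform convergence is ``the approximate-unit property of $\{u_i\}$ evaluated on the compact subset $\{(\rmu,\sigma_\rmu(y))\colon r\in K\cap\supp f,\ y\in\supp f_0\cap X_r\}$ of the graph (compact by properness).'' That set need not be relatively compact inside the open set $\Gamma=\{(t,z)\colon z\in X_t\}$ whose $C_0$-algebra is $C_0^\sigma(G,X)$: take $G=\bR$ acting by translation on $\bR$ restricted to $X=(0,1)$ (proper by Proposition \ref{prop:equivalent conditions of properness}), $f_0$ supported in $[1/4,3/4]$, and $\rmu=1/2$; then $\sigma_{1/2}(y)=y+1/2$ sweeps out $[3/4,1)$ as $y$ ranges over $\supp f_0\cap X_{-1/2}$, and these points leave every compact subset of $X_{1/2}=(1/2,1)$. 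No approximate unit of $C_0^\sigma(G,X)$ converges to $1$ uniformly there, since $u_i(1/2)\in C_0((1/2,1))$ vanishes at $1$. The estimate survives only because the cofactor $u_r\in C_0(X_r)$ decays at $\partial X_r$, and to make that decay uniform in $r$ you need the family $r\mapsto f_0u_r$ to assemble into a single element of $C_0^\sigma(G,X)$ together with $\sup_r\|w_r\|<\infty$; that is, you need a factorization of the whole section $r\mapsto bf(r)$ that is continuous in $r$. Pointwise Cohen--Hewitt at each $r$ gives no such coherence, and ``a Cohen--Hewitt-style argument producing a continuous factorization locally in $r$'' is exactly the assertion that requires proof (and even granted locally you would still have to patch). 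A smaller slip: your $K=\{r\colon\sigma^e_r(\supp f_0)\cap X\neq\emptyset\}$ need not be compact since $X$ is not compact, though this is harmless because only $K\cap\supp f$ enters.

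The paper closes precisely this hole by applying the genuine Cohen--Hewitt theorem once, globally: it shows that $C_0(\cB)$ is a nondegenerate Banach module over the commutative C*-algebra $C_0^\sigma(G,X)$ via $(g\star h)(r)=g(r)\delta_e\cdot h(r)$ (nondegeneracy via \cite[II 14.6]{FlDr88}), factors the whole section $xf=g\star h$ with $g\in C_c^\sigma(G,X)$ and $h\in C_c(\cB)$, and identifies the limit as $x\triangleleft f(r)=\theta_\rmu(g(r))\delta_\rmu\cdot h(r)$, which is continuous by inspection; the error is then bounded by $\|\mu(u_i)g-g\|_\infty\|h\|_\infty$, where $\mu(z)(r)=\theta_r(z(\rmu))$ is an automorphism of $C_0^\sigma(G,X)$, so $\{\mu(u_i)\}$ is again an approximate unit and uniform convergence is immediate. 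I recommend replacing your pointwise factorization by this global one; your computation of the pointwise limit and your concluding cutoff argument can then remain as written.
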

\begin{proof}
 The set of continuous sections of $\cB$ vanishing at $\infty,$ $C_0(\cB),$ is a Banach space with the norm $\|\ \|_\infty.$
 The function $C_0^\sigma(G,X)\times C_0(\cB)\to C_0(\cB),\ (f,g)\mapsto f\star g,$ with $f\star g(r)=f(r)\delta_e \cdot g(r),$ is a linear action such that $\|f\star g\|_\infty\leq \|f\|_\infty\|g\|_\infty.$
 We claim that $\star$ in non degenerate in the sense that $C_0^\sigma(G,X)\star C_0(\cB)=C_0(\cB).$
 Indeed, let $S:=\spn \{f\star g\colon f\in C_c^\theta(G,X),\ g\in C_c(\cB)\}.$
 It is clear that $\{vf\colon v\in C_c(G),\ v\in S\} \subset S$ and, for all $t\in G,$ 
 \begin{equation*}
   \{ f(t)\colon f\in S \} = \spn \{ u\cdot v\colon u\in C_0(X_r)\delta_e, \ v\in B_r \}.
 \end{equation*}
 The non degeneracy condition of the action of $\cA_\sigma$ on $\cB$ implies
 \begin{equation*}
  B_r = B_rB_r^* B_eB_r =  C_0(X_r)\delta_e \cdot B_e B_r \subset C_0(X_r)\delta_e \cdot B_r\subset B_r. 
 \end{equation*}
 Hence $\{ f(t)\colon f\in S \}$ is dense in $B_r.$
 By \cite[II 14.6]{FlDr88} the conditions above imply $S$ is dense in $C_c(\cB)$ in the inductive limit topology.
 
 For all $f\in S$ and approximate unit $\{u_i\}_{i\in I}$ of $C_0^\sigma(G,X)$ we have $\lim_i \| u_i\star f- f \|_\infty =0$ and this implies the same holds for all $f\in C_0(\cB).$
 Now the Cohen-Hewitt Theorem implies for all $f\in C_0(\cB)$ there exists $g\in C_0^\sigma(G,X)$ and $f'\in C_0(\cB)$ such that $f=g\star f'.$
 
 Fix $x\in E^0_\cB$ and $f\in C_c(\cB).$
 The function $xf\in C_c(\cB),$ given by $(xf)(r)=x f(r),$ admits a factorization $g\star h$ with $g\in C_c^\sigma(G,X)$ and $h\in C_c(\cB).$
 Consider an approximate unit $\{u_i\}_{i\in I}$ of $C_0^\sigma(G,X)$ and define, for each $i\in I,$ the function $[xf]_i\in C_c(G,B_e)$ by $[xf]_i(r):=u_i(r^{-1})\delta_\rmu \cdot xf(r).$
 Clearly, $\supp [xf]_i\subset \supp f.$
 Thus to show $\{[xf]_i\}_{i\in I}$ converges in the inductive limit topology it suffices to prove it converges uniformly.
 
 Define $k\colon G\to B_e$ by $k(r):=\theta_\rmu(g(r))\delta_\rmu\cdot h(r).$
 Then $k\in C_c(G,B_e)$ and, for all $r\in G,$
 \begin{align*}
  \| [xf]_i(r)-k(r)\|
    & = \| u_i(r^{-1})\delta_\rmu \cdot xf(r) - \theta_\rmu(g(r))\delta_\rmu\cdot h(r) \|\\
    & = \| u_i(r^{-1})\delta_\rmu \cdot g(r)\delta_e \cdot h(r) - \theta_\rmu(g(r))\delta_\rmu\cdot h(r) \|\\
    & \leq \| u_i(r^{-1})\delta_\rmu g(r)\delta_e - \theta_\rmu(g(r))\delta_\rmu \|\|h\|_\infty\\
    & \leq \| \theta_r(u_i(r^{-1})) g(r)- g(r)\|\|h\|_\infty\\
 \end{align*}

 The function $\mu\colon C_0^\sigma(G,X)\to C_0^\sigma(G,X)$ given by $\mu(z)(r)=\theta_r(z(r^{-1}))$ is an isomorphism of C*-algebras.
 Then $\{\mu(u_i)\}_{i\in I}$ is an approximate unit of $C_0^\sigma(G,X)$ and the inequalities above imply $  \| [xf]_i -k\| \leq \| \mu(u_i)g- g\|\|h\|_\infty.$
 Thus $\{[xf]_i\}_{i\in I}$ converges to $k$ in the inductive limit topology.

 We set, by definition, $k:=x\triangleleft f.$
 In order to prove \eqref{equ:the action on the right} choose $w\in C_c(X^e)$ such that $w\delta_e\cdot x = x.$
 Then $(wg)\star h (r) = wg(r)\delta_r \cdot h(r) = w\delta_e \cdot g(r)\delta_r\cdot h(r)=w\delta_e \cdot xf(r)=xf(r).$
 Performing the construction of $x\triangleleft f$ using the factorization $xf=(wg)\star h$ we obtain $x\triangleleft f(r)= \theta_\rmu(wg(r))\delta_\rmu \cdot h(r).$
 
 For every $t\in \supp(h)$ we have $\supp (\theta_\rmu(wg(r)))\subset \sigma^e_\rmu(\supp (w)).$
 Since $\sigma^e$ is proper there exist a compact subset of the enveloping space $X^e$ containing $\bigcup\{ \supp (\theta_\rmu(wg(r)))\colon\ t\in \supp(h) \}.$
 Thus we may find $z\in C_c(X^e)$ such that $z\theta_\rmu(wg(r))=\theta_\rmu(wg(r)),$ for all $r\in G.$
 This construction of $z$ guarantees that $z\delta_e \cdot (x\triangleleft f(r))=x\triangleleft f(r),$ for all $r\in G.$
 Then we have
 \begin{equation*}
  x\diamond f= \int_{G}  \Delta(r)^{-1/2} z\delta_e\cdot ( x\triangleleft f(r))\, dr 
     = z\delta_e\cdot (x\diamond f) \in E^0_\cB.
 \end{equation*}
\end{proof} 

We want to construct a right $C_c(\cB)-$module with inner product out of $E^0_\cB.$
For this we need to show the following.

\begin{lemma}
 For all $x,y\in E^0_\cB$ and $f,g\in C_c(\cB),$ the identities
 \begin{equation*}
  \laa x,y\diamond f\raa_\cB= \laa x,y\raa_\cB * f \qquad \qquad (x\diamond f)\diamond g = x\diamond (f*g)
 \end{equation*}
 obtain, where $* $ is the convolution product in $C_c(\cB).$
\end{lemma}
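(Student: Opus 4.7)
The plan is to prove the two identities by direct computation.

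For the first identity $\laa x, y \diamond f\raa_\cB = \laa x, y\raa_\cB * f$, I would factor $x = u\delta_e \cdot a$ and $y = v\delta_e \cdot b$ with $u, v \in E^0_\sigma$ and $a, b \in B_e$, then evaluate both sides at a fixed $t \in G$. The right-hand side is immediate from the definition of convolution and formula~\eqref{equ:Cc(B) valued inner product}:
\begin{equation*}
(\laa x, y\raa_\cB * f)(t) = \int_G a^*\bigl(\laa u, v\raa_\sigma(s) \cdot b\bigr)\, f(s^{-1}t)\, ds.
\end{equation*}
For the left-hand side, I would pull $\laa x, \cdot\raa_\cB$ past the integral defining $y \diamond f$. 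To do so I first verify that $y \triangleleft f(r) \in E^0_\cB$ for each $r$ — this uses the element $z \in C_c(X^e)$ constructed at the end of the proof of Lemma~\ref{lem:construction of the action of cCgamma}, which satisfies $z\delta_e \cdot y\triangleleft f(r) = y\triangleleft f(r)$ uniformly in $r \in \supp(f)$. Property~(3) of Proposition~\ref{prop:construction of laa raa for B} then justifies
\begin{equation*}
\laa x, y\diamond f\raa_\cB = \int_G \Delta(r)^{-1/2}\, \laa x, y\triangleleft f(r)\raa_\cB\, dr,
\end{equation*}
where the integral is taken in the inductive limit topology of $C_c(\cB)$. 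I would then compute $\laa x, y\triangleleft f(r)\raa_\cB(t)$ by writing $y\triangleleft f(r) = \theta^e_{r^{-1}}(wg(r))\delta_e\cdot c(r)$ from a suitable factorization $yf = g \star h$ (the factor $c(r)$ being obtained as a limit involving an approximate unit of $C_0(X_{r^{-1}})$ acting on $h(r)$), applying~\eqref{equ:Cc(B) valued inner product}, and performing the change of variables $s = tr^{-1}$. The resulting integral matches the right-hand side.

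For the second identity $(x \diamond f)\diamond g = x \diamond (f*g)$, I would expand both sides using the integral definition of~$\diamond$. Unwinding the defining limits of $\triangleleft$ and applying Fubini (justified by the compact supports of $f$ and $g$), the left-hand side becomes
\begin{equation*}
\int_G\int_G \Delta(r)^{-1/2}\Delta(s)^{-1/2}\,(x \triangleleft f(r)) \triangleleft g(s)\, dr\, ds.
\end{equation*}
On the right, after expanding $(f * g)(t) = \int_G f(r) g(r^{-1}t)\, dr$ and substituting $t = rs$, one arrives at the same double integral thanks to the associativity property~(\ref{item: defi associativity}) of Definition~\ref{defi:action of Fell bundle by adjointable maps}, which collapses the two successive applications of $\triangleleft$ into a single one corresponding to $f(r)g(r^{-1}t)$.

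The main obstacle is the justification of the interchange in the first identity: property~(3) of Proposition~\ref{prop:construction of laa raa for B} provides continuity of $\laa \cdot, \cdot\raa_\cB$ only when the left $\phi^e(\cdot)$-factor is held fixed while the $B_e$-factor varies. The uniform factorization $y\triangleleft f(r) = z\delta_e \cdot c(r)$ (with $z \in C_c(X^e)$ independent of $r$) furnished by Lemma~\ref{lem:construction of the action of cCgamma} is precisely what enables property~(3) to apply both to the Riemann sums approximating $y \diamond f$ and to the net defining $y \triangleleft f(r)$.
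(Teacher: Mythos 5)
Your treatment of the first identity is essentially the paper's argument: both proofs factor $x$ and $y$ through $E^0_\sigma$, unwind $y\diamond f$ via the approximate unit of $C_0^\sigma(G,X)$ from Lemma~\ref{lem:construction of the action of cCgamma}, and justify passing $\laa x,\cdot\raa_\cB$ through the integral by a uniform-convergence argument (the paper does this by hand with an auxiliary net $F_i\to F$ in $C_c(G,B_t)$; you invoke property~(3) of Proposition~\ref{prop:construction of laa raa for B} together with the uniform factorization $y\triangleleft f(r)=z\delta_e\cdot c(r)$ --- same content, slightly different packaging). That part is fine.

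For the second identity you diverge from the paper, and this is where your sketch has a real gap. The paper does \emph{not} compute $(x\diamond f)\diamond g$ at all: it sets $y:=(x\diamond f)\diamond g-x\diamond(f*g)\in E^0_\cB$ and observes that, by the \emph{first} identity and associativity of convolution, $\laa y,y\raa_\cB=(\laa y,x\raa_\cB*f)*g-\laa y,x\raa_\cB*(f*g)=0$, whence $y=0$ by the definiteness statement in Lemma~\ref{lem:zero inner product}. Your direct route instead requires showing that
\begin{equation*}
 \lim_i\, u_i(\smu)\delta_\smu\cdot\Bigl(\lim_j\, u_j(\rmu)\delta_\rmu\cdot x f(r)\Bigr)g(s)
 \;=\;\lim_k\, u_k(\smu\rmu)\delta_{\smu\rmu}\cdot\bigl(x f(r)g(s)\bigr),
\end{equation*}
i.e.\ that two iterated limits over \emph{different} approximate units (of $C_0(X_\smu)$ and $C_0(X_\rmu)$) collapse to a single limit over an approximate unit of $C_0(X_{\smu\rmu})$; citing associativity of the action only reduces the problem to this statement, it does not prove it. One also needs the interchange of these limits with the $dr$- and $ds$-integrals, uniformly on the compact supports. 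None of this is fatal --- it can be pushed through with the same factorization tricks used in Lemma~\ref{lem:construction of the action of cCgamma} --- but it is precisely the delicate analysis that the paper's one-line argument via Lemma~\ref{lem:zero inner product} is designed to avoid, and your proposal currently asserts the collapse rather than proving it. I would recommend replacing your second computation by the definiteness trick: it is strictly shorter and uses only what you have already established for the first identity.
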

\begin{proof}
 Without loss of generality we can replace $x$ and $y$ with $g\delta_e\cdot x$ and $h\delta_e\cdot y,$ with $f,g\in E^0_\sigma.$
 Fix $t\in G$ and let $\{u_i\}_{i\in I}$ and $\{v_j\}_{j\in J}$ be approximate units of $C_0(X)$ and $C_0^\sigma(G,X),$ respectively.
 The construction of $\laa \ ,\ \raa_\cB$ described in the proof of Proposition \ref{prop:construction of laa raa for B} together with Lemma \ref{lem:construction of the action of cCgamma} imply
 \begin{multline}\label{equ:laa raa is Cc(B)-linear}
  \laa g\delta_e\cdot x,(h\delta_e\cdot y)\diamond f\raa_\cB(t) \\
  = \lim_i \Delta(t)^{-1/2} (g\delta_e\cdot x)^* ( u_i \theta^e_t(u_i)\delta_t\cdot [(h\delta_e\cdot y)\diamond f] )\\
  = \lim_i \int_G \lim_j \Delta(ts)^{-1/2} (g\delta_e\cdot x)^* ( u_i \theta^e_t(u_i)\delta_t\cdot v_j(\smu)\delta_\smu\cdot  (h\delta_e\cdot y)f(s) )\, ds\\
  = \lim_i \int_G \lim_j \Delta(ts)^{-1/2} x^* ( g^*u_i \theta^e_t(u_i v_j(\smu)\theta_\smu^e(h) )\delta_{t\smu}\cdot  yf(s) )\, ds\\
  = \lim_i \int_G \Delta(ts)^{-1/2} x^* ( g^*u_i \theta^e_t(u_i\theta_\smu^e(h) )\delta_{t\smu}\cdot  yf(s) )\, ds\\
  = \lim_i \int_G \Delta(ts^{-1})^{-1/2}\Delta(\smu) x^* ( g^*u_i \theta^e_t(u_i\theta_s^e(h) )\delta_{ts}\cdot  yf(\smu ) )\, ds\\
  = \lim_i \int_G \Delta(s)^{-1/2}x^* ( g^*u_i \theta^e_t(u_i\theta_{\tmu s}^e(h) )\delta_{s}\cdot  yf(\smu t ) )\, ds\\
  = \lim_i \int_G \Delta(s)^{-1/2}x^* ( u_i \theta^e_t(u_i)\delta_e \cdot g^*\theta_{s}^e(h) \delta_{s}\cdot  yf(\smu t ) )\, ds.
 \end{multline}
 
 Note that $g^*\theta_{s}^e(h) \delta_{s}\cdot  yf(\smu t )\in B_t$ for all $s\in G.$
 Let $F_i,F\in C_c(G,B_t)$ be defined as
 \begin{align*}
  F_i(s):&=u_i \theta^e_t(u_i)\delta_e\cdot g^*\theta_{s}^e(h) \delta_{s}\cdot  yf(\smu t )\\
  F(s):&=g^*\theta_{s}^e(h) \delta_{s}\cdot  yf(\smu t )
 \end{align*}
 
 The supports of both $F_i$ and $F$ are contained in $t\supp(f)^{-1},$ thus the net $\{F_i\}_{i\in I}$ converges to $F$ in the inductive limit topology if and only if it converges uniformly.
 To show uniform convergence it suffices to prove that given a net $\{s_i\}_{i\in I}\subset G$ converging to $s\in G,$ it follows that $\lim_i \|F_i(s_i)-F(s)\|=0.$
 Since $F(s)\in B_t = B_tB_t^*B_e B_t = C_0(X_t)\delta_e\cdot B_t,$ there exist $m\in C_c(X_t)$ and $z\in B_t$ such that $F(s)=m\delta_e\cdot z.$
 Then
 \begin{align*}
  0\leq \lim_i\|F_i(s_i)-F(s)\| & \leq \lim_i\|F_i(s_i)-F_i(s)\| + \|F_i(s)-F(s)\|\\
  & \leq \lim_i \|u_i \theta^e_t(u_i)\| \|F(s_i)-F(s)\| + \|F_i(s)-F(s)\|\\
  & \leq \lim_i \|F_i(s)-F(s)\| 
    = \lim_i \|u_i \theta^e_t(u_i)\delta_e\cdot F(s)-F(s)\|\\
  & = \lim_i \|u_i \theta^e_t(u_i) m -m \| \|z\| = 0,
 \end{align*}
 where the last identity holds because $\{u_i \theta^e_t(u_i)\}_{i\in I}$ is an approximate unit of $C_0(X_t).$
 
 Now we can continue the computations \eqref{equ:laa raa is Cc(B)-linear} to get
 \begin{align*}
  \laa g\delta_e\cdot x,(h\delta_e\cdot y)\diamond f\raa_\cB(t)
   & = \lim_i \int_G F_i(s)\, ds = \int_G F(s)\, ds\\
   & = \int_G \Delta(s)^{-1/2}x^* (g^*\theta_{s}^e(h) \delta_{s}\cdot  y) f(\smu t ) \, ds\\
   & = \int_G \laa g\delta_e\cdot x,h\delta_e\cdot y\raa (s) f(\smu t ) \, ds\\
   & = \laa g\delta_e\cdot x,h\delta_e\cdot y\raa *f(t).
 \end{align*}
 This proves $\laa g\delta_e\cdot x,(h\delta_e\cdot y)\diamond f\raa_\cB = \laa g\delta_e\cdot x,h\delta_e\cdot y\raa *f.$
 
 To show that $(x\diamond f)\diamond g = x\diamond (f*g)$ (for $x\in E^0_\cB$ and $f,g\in C_c(\cB)$) it suffices to show, by Lemma \ref{lem:zero inner product}, that for $y:=(x\diamond f)\diamond g - x\diamond (f*g)$ one has $\laa y,y\raa_\cB=0.$
 But this is so because
 \begin{equation*}
  \laa y,y\raa_\cB
    = (\laa y,x\raa_\cB *f)*g - \laa y,x\raa_\cB * (f*g)=0.
 \end{equation*}
\end{proof}

By a C*-norm of a *-algebra $A$ we mean a norm $\|\ \|$ of $A$ such that $\|ab\|\leq \|a\|\|b\|$ and $\|a^*a\|=\|a\|^2,$ for all $a,b\in A.$
Then a C*-algebra is a *-algebra which has a C*-norm $\|\ \|$ such that $(A,\|\ \|)$ is a Banach space.

An exotic C*-norm of $C_c(\cB)$ is (for us) any C*-norm $\|\ \|$ of $C_c(\cB)$ such that $\|\ \|_\red\leq \|\ \|\leq \|\ \|_\uni,$ where $\|\ \|_\red$ and $\|\ \|_\uni$ are the reduced and universal C*-norms (by the universal C*-norm we mean the largest one dominated by $\|\ \|_1$).

\begin{definition}
 Let $\mu$ be an exotic C*-norm of $C_c(\cB)$ and denote $C_\mu(\cB)$ the C*-algebra obtained by completing $C_c(\cB)$ with respect to $\mu.$
 Then we define $E^\mu_\cB$ as the completion of $E^0_\cB$ with respect to the norm $\|\ \|_\mu\colon E^0_\cB\to [0,+\infty),\ x\mapsto \mu(\laa x,x\raa_\cB)^{1/2},$ and regard $E^\mu_\cB$ as a right $C_\mu(\cB)-$Hilbert module.
 The $\mu-$fixed point algebra for $\cB,$ $\bF_\cB^\mu,$ is the C*-algebra of generalized compact operators of $E^\mu_\cB,$ $\bK(E^\mu_\cB).$
\end{definition}

For future use we give a bound on $\|\ \|_\mu,$ for every exotic C*-norm $\mu.$

\begin{remark}\label{rem:bound on mu norm}
For every $f\in E^0_\sigma$ and  $x\in B_e,$  $\|f\delta_e\cdot x\|_\mu \leq \|f\|_\sigma\|x\|,$ where $\|\ \|_\sigma$ is the norm of $E_\sigma.$
Indeed, it suffices to consider $\mu$ as the universal C*-norm.
Then the bound follows from \eqref{equ:T and That for inner product}.
\end{remark}

The fixed point algebras $\bF_\cB^\mu$ have a natural $C_0(X/\sigma)-$algebra structure, as we show below.

\begin{proposition}\label{prop:action of orbit space of fixed point algebra}
 Let $C_b(X)=M(C_0(X))$ act on $B_e$ by extending the action of $C_0(X)=C_0(X)\delta_e$ on $B_e.$
 Consider $C_0(X/\sigma)$ as a C*-subalgebra of $C_b(X)$ and let $C_0(X/\sigma)$ act on $B_e$ through the action of  $C_b(X).$
 Then $C_0(X/\sigma)E^0_\cB\subset E^0_\cB$ and this gives an action $C_0(X/\sigma)\times E^0_\cB\to E^0_\cB,\ (f,x)\mapsto fx.$
 Moreover, for every exotic C*-norm $\mu$ of $C_c(\cB)$ there exits a unique *-homomorphism $\phi_\mu\colon C_0(X/\sigma)\to \bB(E^\mu_\cB)=M(\bF_\cB^\mu)$ such that $\phi_\mu(f)x=fx,$ for all $f\in C_0(X/\sigma)$ and $x\in E^0_\mu.$
 Besides, $\phi_\mu$ is non degenerate.
\end{proposition}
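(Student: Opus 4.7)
My plan is to verify the claims in three movements: invariance of $E^0_\cB$ under $C_0(X/\sigma)$; adjointability of the pointwise multiplication on the pre-Hilbert module; and continuous extension together with non-degeneracy. Throughout I identify $C_0(X/\sigma)$ with $C_0(X^e/\sigma^e)$ via Proposition~\ref{prop:orbtit space of partial action and enveloping actions} and view the latter as the $\theta^e$-fixed C*-subalgebra of $C_b(X^e)=M(C_0(X^e))$.

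For invariance, I observe that the extension $\bar\phi\colon C_b(X)\to M(B_e)$ and the (non-degenerate) extension of $\phi^e$ to $C_b(X^e)$ both extend $\phi$ on $C_0(X)$, so by uniqueness they agree on $C_0(X/\sigma)$ under the natural identifications. Consequently, for $g\in C_0(X/\sigma)$ and $x=\phi^e(f)b\in E^0_\cB$ with $f\in E^0_\sigma$, I get $gx=\phi^e(gf)b$; since $gf\in C_c(X^e)\cap C_0(X)=E^0_\sigma$, this lies in $E^0_\cB$. The heart of the argument is to show next that $T_g\colon x\mapsto gx$ is adjointable with respect to $\laa\,,\,\raa_\cB$ with adjoint $T_{\bar g}$. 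Using \eqref{equ:Cc(B) valued inner product} and the $\theta^e$-invariance of $g$ and $\bar g$, a direct computation gives
\[
\laa g(f\delta_e\cdot a),h\delta_e\cdot b\raa_\cB(t)=\Delta(t)^{-1/2}a^*((gf)^*\theta^e_t(h)\delta_t\cdot b)=\laa f\delta_e\cdot a,\bar g(h\delta_e\cdot b)\raa_\cB(t),
\]
the second equality because $(gf)^*\theta^e_t(h)=\bar g f^*\theta^e_t(h)=f^*\theta^e_t(\bar g h)$. The conceptual key (and essentially the only obstacle) is that elements of $C_0(X/\sigma)$, seen inside $C_b(X^e)$, commute with the $\theta^e_t$'s, which is what lets us rebracket.

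Once adjointability is in hand, the rest is formal. From $T_{\bar g}T_g=T_{|g|^2}$ and positivity of $\laa\,,\,\raa_\cB$ (Lemma~\ref{lem:zero inner product}) I obtain
\[
\laa T_gx,T_gx\raa_\cB=\laa x,|g|^2x\raa_\cB\leq\|g\|_\infty^2\,\laa x,x\raa_\cB
\]
in $C^*(\cB)$; applying any exotic C*-norm $\mu$ yields $\mu(\laa T_gx,T_gx\raa_\cB)\leq\|g\|_\infty^2\,\mu(\laa x,x\raa_\cB)$, so $T_g$ extends uniquely to an adjointable operator $\phi_\mu(g)\in\bB(E^\mu_\cB)$. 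The assignment $g\mapsto\phi_\mu(g)$ is a $*$-homomorphism because $T_{gh}=T_gT_h$ on $E^0_\cB$ (both sending $\phi^e(f)b$ to $\phi^e(ghf)b$) and $T_g^*=T_{\bar g}$, and it is unique by density of $E^0_\cB$. Finally, for non-degeneracy, given $x=\phi^e(f)b\in E^0_\cB$ the image $\pi(\supp f)$ is compact in $X/\sigma$, so local compactness produces $g\in C_c(X/\sigma)$ equal to $1$ on it; then $gf=f$ and $\phi_\mu(g)x=x$, so $\phi_\mu(C_0(X/\sigma))E^\mu_\cB\supseteq E^0_\cB$ is dense.
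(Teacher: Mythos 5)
Your proof is correct and, on the two structural steps (invariance of $E^0_\cB$ and adjointability of $T_g$ via the $\theta^e$-invariance of elements of $C_0(X/\sigma)$), it coincides with the paper's argument. Where you genuinely diverge is in the remaining two steps. For boundedness, the paper does \emph{not} argue intrinsically: it picks a representation $T$ of $\cB$ factoring faithfully through $C^*_\mu(\cB)$, invokes the extension $\hat T$ of Lemma~\ref{lem:extension of representation}, and transports the known inequality $\laa fg,fg\raa_\sigma\le\|f\|^2\laa g,g\raa_\sigma$ from $C^*(\cA_\sigma)$ via the computation \eqref{equ:T and That for inner product}. Your route — deducing $\laa T_gx,T_gx\raa_\cB\le\|g\|_\infty^2\laa x,x\raa_\cB$ from adjointability and Lemma~\ref{lem:zero inner product} — is more self-contained, but note that the inequality $\laa x,(\|g\|_\infty^2-|g|^2)x\raa_\cB\ge 0$ requires factoring $\|g\|_\infty^2-|g|^2=h^2$ with $h$ only in $C_b(X/\sigma)$, not in $C_0(X/\sigma)$; you then need that $hx\in E^0_\cB$ (true, since $hf\in C_c(X^e)\cap C_0(X)$ for $f\in E^0_\sigma$) and that your rebracketing computation applies verbatim to $\theta^e$-invariant elements of $C_b(X^e)$ (it does). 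This is a fillable half-line, but it should be said. For non-degeneracy, the paper uses an approximate unit of $C_0(X/\sigma)$ together with the bound $\|\phi_\mu(f_i)g\delta_e\cdot x-g\delta_e\cdot x\|_\mu\le\|f_ig-g\|_\sigma\|x\|$ from Remark~\ref{rem:bound on mu norm} and the convergence $\|f_ig-g\|_\sigma\to 0$ built into $E_\sigma$; your Urysohn argument producing $g\in C_c(X/\sigma)$ with $g\equiv 1$ on the compact set $\pi(\supp f)$ is cleaner and avoids Remark~\ref{rem:bound on mu norm} entirely (it uses only that $X/\sigma$ is LCH, which holds because $\sigma$ is proper). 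Your opening observation that the extension of $\phi$ to $C_b(X)$ and the extension of $\phi^e$ to $C_b(X^e)$ agree on $C_0(X/\sigma)$ is a consistency point the paper leaves implicit; including it is a genuine improvement.
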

\begin{proof}
Take $f\in C_0(X/\sigma)$ and $x\in E^0_\cB.$
Consider a factorization $x=g\delta_e\cdot y$ with $g\in E^0_\sigma$ and $y\in B_e.$
Then, by construction, $fx = f(g\delta_e\cdot y) = (fg)\delta_e\cdot y\in E^0_\cB.$

Now let $\mu$ be an exotic C*-norm of $C_c(\cB).$
Take a non degenerate *-representation $T\colon \cB\to \bB(V)$ such that the integrated form $\tilde{T}\colon C^*(\cB)\to \bB(V)$ factors through a faithful representation of $C^*_\mu(\cB).$
Let $\hat{T}\colon \cA_\sigma\to \bB(V)$ be the *-representation described in Lemma \ref{lem:extension of representation}.
Given $f\in C_0(X/\sigma)$ and $x\in E^0_\sigma$ take a factorization $x=g\delta_e\cdot y$ as explained in the last paragraph.
We know (see Section \ref{ssec:Basic examples}) that $\laa fg,fg\raa_\sigma \leq\|f\|^2  \laa g,g\raa_\sigma $ in $C^*(\cA_\sigma).$
Using the computations in \eqref{equ:T and That for inner product} one obtains, for all $\xi\in V,$ that
\begin{equation*}
 \langle \tilde{T}_{ \|f\|^2\laa x,x\raa_\cB - \laa fx,fx\raa_\cB }\xi,\xi\rangle
   = \langle \tilde{\hat{T}}_{\|f\|^2\laa g,g\raa_\sigma - \laa fg,fg\raa_\sigma}T_y\xi,T_y\xi\rangle\geq 0.
\end{equation*}
Thus $\laa fx,fx\raa_\cB\leq \|f\|^2\laa x,x\raa_\cB$ in $C^*_\gamma(\cB).$
This implies that for all $f\in C_0(X/\sigma)$ there exists a unique bounded operator $\phi_\mu(f)\colon E^\mu_\cB\to E^\mu_\cB$ such that $\phi_\mu(f)x=fx,$ for all $x\in E^\mu_\cB.$
Moreover, $\|\phi_\mu(f)x\|_\mu\leq \|f\|\|x\|_\mu.$

The operator $\phi_\mu(f)$ is adjointable with adjoint $\phi_\mu(f^*)$ because, for all $x,y\in B_e,$ $g,h\in E^0_\sigma$ and $t\in G,$
\begin{align*}
 \laa \phi_\mu(f) (g\delta_e\cdot x),h\delta_e\cdot y\raa_\cB (t)
 & = x^*(\laa fg,h\raa_\sigma(t)\cdot y)
   = x^*(\laa g,f^*h\raa_\sigma(t)\cdot y)\\
 & = \laa g\delta_e \cdot x, \phi_\mu(f^*)(h\delta_e\cdot y)\raa_\cB (t).
\end{align*}

Now that we know the map $\phi_\mu\colon C_0(X/\sigma)\to M(\bF_\cB^\mu)$ is defined and preserves the involution, we leave to the reader the verification of the fact that $\phi_\mu$ is linear and multiplicative.

In order to show that $\phi_\mu$ is non degenerate it suffices to show that given an approximate unit $\{f_i\}_{i\in I}$ of $C_0(X/\sigma),$ $g\in E^0_\sigma$ and $x\in B_e,$ we have that 
\begin{equation*}
\lim_i \| \phi_\mu(f_i) g\delta_e\cdot x - g\delta_e\cdot x \|_\mu = 0. 
\end{equation*}
By Remark \ref{rem:bound on mu norm} we have
\begin{equation*}
 \| \phi_\mu(f_i) g\delta_e\cdot x - g\delta_e\cdot x \|_\mu \leq \|f_ig-g\|_\sigma \|x\|.
\end{equation*}
The construction of $E_\sigma$ in Section \ref{ssec:Basic examples} implies $\lim_i\| f_ig-g\|_\sigma=0.$
Thus $\phi_\mu$ is non degenerate.
\end{proof}

The next result implies there are as many exotic fixed point algebras (for a given Fell bundle) as exotic C*-norms.

In the proof below we consider Hilbert modules as ternary C*-rings (C*-trings) \cite{Zl83}.
More precisely, given a right $A-$Hilbert module $Y$ we consider on $Y$ the ternary operation $(x,y,z)_Y:=x\langle y,z\rangle_A.$
An homomorphism of C*-trings is a linear map $\phi\colon E\to F$ such that $\phi(x,y,z)=(\phi(x),\phi(y),\phi(z)),$ for all $x,y,z\in E.$

\begin{proposition}
 Given two exotic C*-norms of $C_c(\cB),$ $\mu$ and $\nu$ with $\mu\leq \nu,$ there exists a unique homomorphism of C*-trings $\kappa_{\nu}^\mu\colon \colon E^\nu_\cB\to E^\mu_\cB$ extending the natural identity map of $E^0_\cB.$
 Moreover, $\kappa_{\nu}^\mu$ is surjective.
 In case the inner products $\laa \ \,\ \raa_\cB$ span a dense subset of $C^*_\nu(\cB)$ the following are equivalent:
 \begin{enumerate}
  \item $\kappa_{\nu}^\mu$ is injective (and hence an isomorphism).
  \item $\kappa_{\nu}^\mu$ is isometric (and hence an isomorphism).
  \item $\mu=\nu.$
 \end{enumerate}
\end{proposition}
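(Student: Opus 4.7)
The plan is to construct $\kappa_\nu^\mu$ by extending the identity map on $E^0_\cB$, prove surjectivity, and finally reduce the three-way equivalence to a kernel computation on $C^*$-algebras. Since $\mu\leq\nu$ on $C_c(\cB)$, for every $x\in E^0_\cB$ one has $\|x\|_\mu = \mu(\laa x,x\raa_\cB)^{1/2}\leq \nu(\laa x,x\raa_\cB)^{1/2} = \|x\|_\nu$, so the identity map on $E^0_\cB$ extends uniquely to a contraction $\kappa_\nu^\mu\colon E^\nu_\cB\to E^\mu_\cB$. The C*-tring operation $(x,y,z)\mapsto x\diamond\laa y,z\raa_\cB$ is jointly continuous on each of $E^\nu_\cB$ and $E^\mu_\cB$ (combining the boundedness of the inner product with the contractivity of the right action), and $\kappa_\nu^\mu$ preserves it on the dense subspace $E^0_\cB$, so by continuity $\kappa_\nu^\mu$ is a homomorphism of C*-trings.

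For surjectivity I would invoke the standard fact that every homomorphism of C*-trings has closed range; this can be seen via the linking-algebra construction of Zettl, under which $\kappa_\nu^\mu$ becomes a \Star homomorphism of C*-algebras and inherits the classical closed-range property from C*-algebra theory. Since the image of $\kappa_\nu^\mu$ contains $E^0_\cB$, which is dense in $E^\mu_\cB$ by construction, $\kappa_\nu^\mu$ must be surjective.

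For the equivalences under the density hypothesis, (3)$\Rightarrow$(2)$\Rightarrow$(1) are immediate. The essential step is (1)$\Rightarrow$(3). Let $q\colon C^*_\nu(\cB)\twoheadrightarrow C^*_\mu(\cB)$ be the canonical surjection induced by $\mu\leq\nu$. I would first check the $q$-equivariance
\[
\kappa_\nu^\mu(x\diamond f)=\kappa_\nu^\mu(x)\diamond q(f),\qquad x\in E^\nu_\cB,\ f\in C^*_\nu(\cB),
\]
which holds trivially on the dense pair $E^0_\cB\times C_c(\cB)$ (where both $\kappa_\nu^\mu$ and $q$ act as the identity) and extends by joint continuity. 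Assuming $\kappa_\nu^\mu$ is injective, take any $a\in\ker q$; for every $x\in E^\nu_\cB$ we obtain $\kappa_\nu^\mu(x\diamond a)=\kappa_\nu^\mu(x)\diamond q(a)=0$, so injectivity forces $x\diamond a=0$. Taking inner products gives $\laa y,x\raa_\cB\cdot a = \laa y,x\diamond a\raa_\cB=0$ for every $y\in E^\nu_\cB$; by the density hypothesis the span of $\{\laa y,x\raa_\cB\}$ is dense in $C^*_\nu(\cB)$, whence $b\cdot a=0$ for every $b\in C^*_\nu(\cB)$, and multiplying by an approximate unit yields $a=0$. Hence $\ker q=0$ and $\mu=\nu$.

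The main obstacle I anticipate is the surjectivity of $\kappa_\nu^\mu$: a priori it is only a contractive linear map with dense image, and the closed-range property must be extracted from the extra C*-tring structure, either by appealing to Zettl's linking-algebra picture or by factoring through the isometric quotient $E^\nu_\cB/\ker\kappa_\nu^\mu$ and using the fact that injective tring homomorphisms are isometric.
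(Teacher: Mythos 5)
Your construction of $\kappa_\nu^\mu$ and your surjectivity argument match the paper's: the paper also extends the identity by the contractivity $\|x\|_\mu\leq\|x\|_\nu$ and gets closed range from the C*-tring (ternary ring) structure, citing a result that plays the role of your linking-algebra reduction. Where you genuinely diverge is in the hard implication of the equivalence. The paper first quotes the fact that an injective homomorphism of C*-trings is automatically isometric to get (1)$\Leftrightarrow$(2), and then proves (2)$\Rightarrow$(3) by regarding $E^\nu_\cB$ as a \emph{full} right $C^*_\nu(\cB)$-Hilbert module (fullness is exactly what the density hypothesis gives) and recovering the norm from the module: $\nu(f)=\sup\{\|x\diamond f\|_\nu\colon x\in E^0_\cB,\ \|x\|_\nu\leq 1\}=\mu(f)$. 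You instead prove (1)$\Rightarrow$(3) directly: you observe the $q$-equivariance $\kappa_\nu^\mu(x\diamond f)=\kappa_\nu^\mu(x)\diamond q(f)$ for the canonical surjection $q\colon C^*_\nu(\cB)\to C^*_\mu(\cB)$, deduce from injectivity that $x\diamond a=0$ for $a\in\ker q$, and then use $\laa y,x\diamond a\raa_\cB=\laa y,x\raa_\cB * a$ together with the density hypothesis and an approximate unit (or $b=a^*$) to force $a=0$, hence $\ker q=0$ and $\mu=\nu$ since an injective \Star homomorphism of C*-algebras is isometric. Both arguments are correct and use the density hypothesis in the same essential way (fullness of the module); your cycle (3)$\Rightarrow$(2)$\Rightarrow$(1)$\Rightarrow$(3) has the mild advantage of not needing the ``injective tring homomorphisms are isometric'' input at all, while the paper's route gives the quantitative norm identity $\nu(f)=\mu(f)$ more directly.
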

\begin{proof}
 For all $x\in E^0_\cB$ we have
 \begin{equation*}
  \|x\|_\mu = \mu(\laa x,x\raa_\cB)^{1/2}\leq \nu(\laa x,x\raa_\cB)^{1/2}= \|x\|_\nu.  
 \end{equation*}
 Thus the identity map of $E^0_\cB$ admits a unique linear and continuous extension $\kappa_{\nu}^\mu.$
 This extension is a homomorphism because the identity 
 \begin{equation*}
  \kappa_{\nu}^\mu(x,y,z)=(\kappa_{\nu}^\mu(x),\kappa_{\nu}^\mu(y),\kappa_{\nu}^\mu(z))
 \end{equation*}
holds for all $x,y,z\in E^0_\cB$ and hence, by continuity, for all $x,y,z\in E^\nu_\cB.$

The range of $\kappa^\mu_\nu$ is closed by \cite[Corollary 4.8]{abadie2017applications}.
Clearly (3) implies (1) and (2) and these last two conditions are equivalent by \cite[Proposition 3.11]{abadie2017applications}.

Assume (2) holds.
Since the inner products span a dense subset of $C_\nu(\cB),$ they also span a dense subset of $C_\mu(\cB).$
Regarding $E_\cB^\nu$ (respectively, $E_\cB^\mu$) as full right $C^*_\nu(\cB)-$Hilbert module (respectively, $C^*_\nu(\cB)-$Hilbert module) we obtain, for all $f\in C_c(\cB),$
\begin{equation*}
 \nu(f)=\sup \{\| x\diamond f \|_\nu\colon x\in E^0_\cB,\ \|x\|_\nu\leq 1  \}  =\mu(f).
\end{equation*}
This completes the proof.
\end{proof}

As explained in \cite{Zl83} one can recover the $\mu-$fixed point algebra out of the C*-tring structure of $E^\mu_\cB.$
In fact the maps $\kappa^\mu_\nu\colon E^\nu_\cB\to E^\mu_\cB$ induce surjective morphism of C*-algebras ${\kappa^\mu_\nu}^r\colon \bF^\nu_\cB\to \bF^\mu_\cB$ \cite[Proposition 4.1]{Ab03} and the equivalence in our last Proposition also holds for these maps.

The main result of this section is the following one, in which we prove a Morita equivalence between exotic crossed products and exotic fixed point algebras.

\begin{theorem}\label{thm:equivalence fixed point algebras cross sectional algebras}
 If $\sigma$ is free then
 \begin{equation*}
  I_\cB:= \spn\{\laa x,y\raa_\cB \colon x,y\in E^0_\cB\} 
 \end{equation*}
 is dense in $C_c(\cB)$ in the inductive limit topology.
 In particular, for every exotic C*-norm $\mu$ of $C_c(\cB)$ the bimodule $E^\mu_\cB$ is a $\bF_\cB^\mu-C^*_\mu(\cB)-$equivalence bimodule.
\end{theorem}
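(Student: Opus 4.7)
The plan is to apply the Fell--Doran density criterion \cite[II.14.6]{FlDr88}, which was already used in Lemma \ref{lem:construction of the action of cCgamma}: a linear subspace $L\subset C_c(\cB)$ that is closed under pointwise multiplication by $C_c(G)$ and satisfies $\{\xi(t):\xi\in L\}$ dense in $B_t$ for every $t\in G$ is automatically inductive-limit dense in $C_c(\cB)$. Since $I_\cB$ is not obviously $C_c(G)$-stable, I would first apply this criterion to the larger set
\begin{equation*}
 \widetilde{I}:=\spn\{T_{a,b}(\alpha):a,b\in B_e,\ \alpha\in C_c(\cA_\sigma)\},\qquad T_{a,b}(\alpha)(t):=a^*(\alpha(t)\cdot b),
\end{equation*}
and then show $\widetilde I$ lies inside the inductive-limit closure of $I_\cB$, forcing $\overline{I_\cB}=C_c(\cB)$.

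The Fell--Doran hypotheses for $\widetilde I$ are easy to check. Closure under $C_c(G)$-multiplication is immediate because $\phi\cdot T_{a,b}(\alpha)=T_{a,b}(\phi\alpha)$. For fibrewise density at $t$ one has $\{T_{a,b}(\alpha)(t)\}=\{a^*(\eta\cdot b):a,b\in B_e,\ \eta\in A_t\}$; the non-degeneracy clause $A_t\cdot B_e=B_t$ from the proposition on properties of actions of Fell bundles gives density of $\{\eta\cdot b\}$ in $B_t$, and Fell-bundle approximate units then give $B_e\cdot B_t=B_t$, finishing the density check. The same computation applied directly to $I_\cB$, using the formula of Proposition \ref{prop:construction of laa raa for B} and the preceding Theorem to obtain density of $\{\laa f,g\raa_\sigma(t):f,g\in E^0_\sigma\}$ in $A_t$, shows $I_\cB$ shares the same fibrewise closures as $\widetilde I$.

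To bridge from $\widetilde I$ back to $I_\cB$, observe that $T_{a,b}\colon C_c(\cA_\sigma)\to C_c(\cB)$ preserves supports and satisfies $\|T_{a,b}(\alpha)(t)\|\le\|a\|\|b\|\|\alpha(t)\|$ by the norm inequality for actions of Fell bundles, hence is continuous in the inductive limit topology. Proposition \ref{prop:construction of laa raa for B} gives $T_{a,b}(\laa f,g\raa_\sigma)=\laa f\delta_e\cdot a,g\delta_e\cdot b\raa_\cB\in I_\cB$. Provided $\spn\laa E^0_\sigma,E^0_\sigma\raa_\sigma$ is itself inductive-limit dense in $C_c(\cA_\sigma)$ --- the basic-example instance of the theorem, which comes from running the same Fell--Doran/enlargement argument at the level of $\cA_\sigma$, where freeness of $\sigma$ delivers fibrewise density of $\{\laa f,g\raa_\sigma(t)\}$ in $A_t=C_0(X_t)\delta_t$ --- continuity propagates this density through $T_{a,b}$ and yields $\widetilde I\subseteq\overline{I_\cB}$. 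Consequently $\overline{I_\cB}\supseteq\widetilde I$ is inductive-limit dense in $C_c(\cB)$.

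For the Morita-equivalence statement, inductive limit density controls the $L^1$-norm and hence every exotic C*-norm $\mu\le\|\ \|_{\uni}$, so $I_\cB$ has dense image in $C^*_\mu(\cB)$. This is exactly fullness of $E^\mu_\cB$ as a right Hilbert $C^*_\mu(\cB)$-module, and combined with the defining identity $\bF^\mu_\cB=\bK(E^\mu_\cB)$ it delivers the $\bF^\mu_\cB$-$C^*_\mu(\cB)$-equivalence bimodule. The main obstacle throughout is the basic-case inductive-limit density; the preceding Theorem only asserts $C^*$-norm density, and upgrading it requires running the bridging argument once more at the base level, which is precisely what the enlargement $\widetilde I$ is designed to accommodate.
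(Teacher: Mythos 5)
Your reduction of the general case to the basic one is essentially the paper's: both arguments run on the transfer map $\alpha\mapsto\bigl(t\mapsto a^*(\alpha(t)\cdot b)\bigr)$, which carries $\laa f,g\raa_\sigma$ to $\laa f\delta_e\cdot a,g\delta_e\cdot b\raa_\cB$, combined with the density criterion of \cite[II 14.6]{FlDr88}; your enlargement $\widetilde I$ merely reorganizes the paper's verification of the two hypotheses for $\overline{I_\cB}$. The genuine gap is the basic case, which you do not actually prove. You claim that the inductive-limit density of $I_\sigma=\spn\{\laa f,g\raa_\sigma\colon f,g\in E^0_\sigma\}$ in $C_c(\cA_\sigma)$ follows from running the same Fell--Doran/enlargement scheme at the level of $\cA_\sigma$, ``where freeness delivers fibrewise density of $\{\laa f,g\raa_\sigma(t)\}$ in $A_t$.'' This cannot work, for two reasons. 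First, fibrewise density of $\spn\{f^*\theta^e_t(g)\colon f,g\in E^0_\sigma\}$ in $C_0(X_t)$ is a Stone--Weierstrass fact that holds with no freeness hypothesis at all; since the theorem is false without freeness (a nontrivial compact group acting trivially on a point gives $I_\sigma=\bC\cdot 1\subset C(G)$, which is not dense, and $C^*(G)$ is not Morita equivalent to $\bC$), an argument in which freeness enters only through fibrewise density must be missing the real mechanism. What fails in the non-free case is the $C_c(G)$-stability of $\overline{I_\sigma}$, not the fibrewise density. Second, the base-level enlargement is circular: the bridging step ``$t\mapsto a^*\alpha(t)b$ lies in $\overline{I_\sigma}$ for every $\alpha\in C_c(\cA_\sigma)$'' is obtained by approximating $\alpha$ by elements of $I_\sigma$ in the inductive limit topology, which is precisely the statement to be proved.

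The paper closes this gap by importing the hard analytic content from Rieffel's proof of Green's Symmetric Imprimitivity Theorem: for the enveloping action $\sigma^e$, which is free and proper by Propositions \ref{prop:freenes} and \ref{prop:equivalent conditions of properness}, the span $I_{\sigma^e}$ is inductive-limit dense in $C_c(\cA_{\sigma^e})$ with control on supports (this is where freeness is genuinely used, via a partition-of-unity argument exploiting that $(t,x)\mapsto(\sigma^e_t(x),x)$ is a homeomorphism onto a closed set), and one then cuts down to the hereditary subbundle $\cA_\sigma$ using the approximate units of \cite[VIII 16.4]{FlDr88} together with $C_c(X^e)C_c(\cA_\sigma)\subset E^0_\sigma$. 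You must either cite or reproduce this step; without it your argument does not get off the ground. Your final paragraph (inductive-limit density controls $\|\ \|_1$, hence every exotic norm, hence fullness and the equivalence bimodule) is fine once the density is in hand.
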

\begin{proof}
 It suffices to work with the universal C*-norm $\|\ \|_\uni$ of $C_c(\cB).$
 The proof of Green's Symmetric Imprimitivity Theorem presented in \cite{Rf82Green}, used here for the enveloping action $\sigma^e,$ implies that
 \begin{equation*}
  I_{\sigma_e}:=\spn \{\laa f,g\raa_{\sigma^e}\colon f,g\in E^0_{\sigma^e}\}\subset C_c(\cA_{\sigma^e})
 \end{equation*}
 is dense in the inductive limit topology in $C_c(\cA_{\sigma^e}).$
 Moreover, as shown in \cite{Rf82Green}, for every $k\in C_c(\cA_{\sigma^e})$ there exists a compact set $L\subset G$ and a net $\{ k_i \}_{i\in I}\in I_{\sigma^e}$ such that $\supp(k_i)\subset L$ for all $i\in I$ and $\|k_i-k\|_\infty\to 0.$
 Since $C_c(\cA_\sigma)$ is hereditary in $C_c(\cA_{\sigma^e}),$ by using the approximate units constructed in \cite[VIII 16.4]{FlDr88} we get that for all $k\in C_c(\cA_\sigma)$ there exists a compact set $L\subset G$ and a net
 \begin{equation*}
  \{k_i\}_{i\in I}\subset \spn\{ u*\laa f,g\raa_{\sigma^e}*v\colon f,g\in C_c(X^e),\ u,v\in C_c(\cA_\sigma) \}
 \end{equation*}
 such that $\supp(k_i)\subset L,$ for all $i\in I,$ and $\|k_i-k\|_\infty\to 0.$
 But since $C_c(X^e)C_c(\cA_\sigma)\subset E^0_\sigma,$ the last approximate unit $\{k_i\}_{i\in I}$ is included in 
 \begin{equation*}
  I_\sigma:=\spn \{\laa f,g\raa_{\sigma}\colon f,g\in E^0_{\sigma}\}\subset C_c(\cA_{\sigma}).
 \end{equation*}

 Now define 
 \begin{equation*}
  I_\cB:=\spn\{\laa x,y\raa \colon x,y\in E^0_\cB\}\subset C_c(\cB)
 \end{equation*}
 and let $\overline{I_\cB}$ be the closure of $I_\cB$ in the inductive limit topology of $C_c(\cB).$
 For $\overline{I_\cB}$ to be equal to $C_c(\cB)$ we just need to show, by \cite[II 14.6]{FlDr88},
 \begin{enumerate}[(i)]
  \item $C_c(G)\overline{I_\cB}\subset \overline{I_\cB}.$
  \item For all $t\in G,$ $\overline{I_\cB}(t):=\{z(t)\colon z\in \overline{I_\cB}\}$ is dense in $B_t.$
 \end{enumerate}
 
 Given $f\in C_c(G)$ and $k\in \overline{I_\cB}$ take a net $\{k_i\}_{i\in I}\subset I_\cB$ converging to $k$ in the inductive limit topology (and hence uniformly over compact sets).
 Thus $\{fk_i\}_{i\in I}$ converges to $fk$ in the inductive limit topology and to show $fk\in \overline{I_\cB}$ it suffices to show that $fk_i\in \overline{I_\cB},$ for all $i\in I.$
 In other words, we can assume from the beginning that $k\in I_\cB.$
 Moreover, by linearity we may assume $k=\laa g\delta_e\cdot x,h\delta_e\cdot y\raa_\cB$ with $g,h\in E^0_\sigma$ and $x,y\in B_e.$
 For all $t\in G$ we have
 \begin{equation*}
  (fk)(t) = x^*(f(t)\laa g,h\raa_\sigma(t)\cdot y). 
 \end{equation*}
 Now take a compact set $L$ and a net $\{m_j\}_{j\in J}\subset I_\sigma \cap C_L(\cA_\sigma) $ converging uniformly to the function $t\mapsto f(t)\laa g,h\raa_\sigma(t).$ 
 Then the net $\{t\mapsto x^*(m_j(t)\cdot y)\}_{j\in J}$ is contained in $I_\cB$ and converges to $fk$ in the inductive limit topology.
 Thus $fk\in\overline{I_\cB}.$
 
 To prove (ii) take $t\in G$ and $b\in B_t.$
 By the Cohen-Hewitt factorization Theorem and the non degeneracy of the action of $\cA_\sigma$ on $\cB$ there exists $c,d\in B_e$ and $g\in C_0(X_t)$ such that $b = c^*(g\delta_t\cdot d).$
 Since there exists an element of $C_c(\cA_\sigma)$ taking the value $g\delta_t$ at $t,$ there exists a net $\{m_j\}_{j\in J}\subset I_\sigma$ such that $\|m_j(t)-g\delta_t\|\to 0.$
 Then the net $\{s\mapsto c^*(m(s)\cdot d)\}_{j\in J}$ lies in $I_\cB$ and, after evaluation at $t,$ converges to $b.$
 Thus $b\in \overline{I_\cB}(t).$ 
 
 The rest of the proof is straightforward because the inductive limit topology is stronger that any topology coming from an exotic C*-norm.
\end{proof}

\subsection{The module \texorpdfstring{$E^0_\cB$}{E} as a tensor product}
For this section we need exactly the same setting we used in the first paragraph of Section \ref{ssec:general weakly proper Fell bundles}.

In Section \ref{ssec:Basic examples} we constructed the $C^*(\cA_\sigma)$ module $E_\sigma,$ there are no exotic C*-norm to be considered for $\sigma$ because $\cA_\sigma$ is amenable.
The action of $\cA_\sigma$ on $\cB$ passes to an action of $C^*(\cA_\sigma)$ on $C^*_\mu(\cB),$ for any exotic C*-norm $\mu.$

\begin{proposition}\label{prop:rep S for exotic norm}
 For every exotic C*-norm $\mu$ of $C_c(\cB)$ there exists a unique *-representation $S^\mu\colon \cA_\sigma \to M(C^*_\mu(\cB))$ such that:
 \begin{itemize}
  \item For all $a\in \cA_\sigma,$ $S^\mu_a C_c(\cB)\subset C_c(\cB).$
  \item For all $s,t\in G,$ $a\in  C_0(X_t)\delta_t$ and $f\in C_c(\cB),$ $S^\mu_a f(s) = a\cdot f(\tmu s).$
  \end{itemize}
 
 Moreover, $S^\mu$ is non degenerate and $\widetilde{S^\mu}\colon C^*(\cA_\sigma)\to M(C^*_\mu(\cB))$ is the unique *-homomorphism satisfying the following
 \begin{itemize}
  \item For all $f\in C_c(\cA_\sigma),$ $\widetilde{S^\mu}_fC_c(\cB)\subset C_c(\cB).$
  \item For all $f\in C_c(\cA_\sigma),$ $g\in C_c(\cB)$ and $t\in G,$ $\widetilde{S^\mu}_f g (t) = \int_G f(s)\cdot g(\smu t)\, dt.$
 \end{itemize}
\end{proposition}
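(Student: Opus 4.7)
Fix the exotic C*-norm $\mu$ on $C_c(\cB)$, select a faithful non-degenerate *-representation $T\colon \cB\to \bB(V)$ whose integrated form descends to a faithful representation $\widetilde{T}_\mu\colon C^*_\mu(\cB)\to \bB(V)$, and let $\hat T\colon \cA_\sigma\to \bB(V)$ be the non-degenerate *-representation produced by Lemma~\ref{lem:extension of representation}. Recall $\|\hat T_a\|\le\|a\|$. For $a\in A_t=C_0(X_t)\delta_t$, define $L_a,R_a\colon C_c(\cB)\to C_c(\cB)$ by
\begin{equation*}
 (L_a f)(s):=a\cdot f(t^{-1}s),\qquad (R_a f)(s):=\Delta(t)^{-1}\bigl(a^*\cdot f(st^{-1})^*\bigr)^*.
\end{equation*}
Grading condition (\ref{item: defi s,t to st}) gives $(L_af)(s)\in B_s$ and $(R_af)(s)\in B_s$, continuity of the action (\ref{item:continuity}) gives continuity, and compact support is clear.

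The key identity
\begin{equation*}
 \widetilde T(L_af)=\hat T_a\,\widetilde T(f),\qquad \widetilde T(R_af)=\widetilde T(f)\,\hat T_a,
\end{equation*}
follows from writing $\widetilde T(L_af)=\int T_{a\cdot f(t^{-1}s)}\,ds=\int \hat T_a T_{f(t^{-1}s)}\,ds$, pulling out $\hat T_a$ by boundedness and making the substitution $u=t^{-1}s$ (with the analogous calculation for $R_a$ using the modular function). From this I deduce
\begin{equation*}
 \mu(L_af),\mu(R_af)\le\|a\|\,\mu(f),
\end{equation*}
so both maps extend to bounded linear operators on $C^*_\mu(\cB)$.

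Next I verify, on $C_c(\cB)$, the three multiplier identities
\begin{equation*}
L_a(f\ast g)=L_a(f)\ast g,\quad R_a(f\ast g)=f\ast R_a(g),\quad f\ast L_a(g)=R_a(f)\ast g,
\end{equation*}
using Fubini, a change of variables $u\leftrightarrow vt^{-1}$ accounting for $\Delta$, the bimodule identity $a\cdot(bc)=(a\cdot b)c$ from item (\ref{item: B linear}), and the adjointable identity $(a\cdot b)^*d=b^*(a^*\cdot d)$ from item (\ref{item: defi action by adjointable map}); the latter is precisely what makes the right-action symbol $b\cdot a:=(a^*\cdot b^*)^*$ compatible with Fell-bundle multiplication. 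Continuity then promotes these identities to $C^*_\mu(\cB)$, so $(L_a,R_a)$ defines an element $S^\mu_a\in M(C^*_\mu(\cB))$. The extension of $\widetilde T_\mu$ to $M(C^*_\mu(\cB))$ sends $S^\mu_a$ to $\hat T_a$; since this extension is injective, the *-representation properties $S^\mu_{\lambda a+b}=\lambda S^\mu_a+S^\mu_b$, $S^\mu_{ab}=S^\mu_aS^\mu_b$ and $S^\mu_{a^*}=(S^\mu_a)^*$ transfer from $\hat T$ to $S^\mu$, and uniqueness of $S^\mu$ is forced by its defining formula on the dense subalgebra $C_c(\cB)$.

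Non-degeneracy of $S^\mu$ reduces, via \cite[II 14.6]{FlDr88}, to showing that for each $t\in G$ the set $\{(S^\mu_a f)(t):a\in\cA_\sigma,\, f\in C_c(\cB)\}$ spans a dense subspace of $B_t$, which in turn follows from $A_tA_\tmu\cdot B_e=B_tB_\tmu$ (the non-degeneracy of the action $\cA_\sigma\times\cB\to\cB$). Finally, since $s\mapsto S^\mu_{f(s)}g$ is continuous with compact support for $f\in C_c(\cA_\sigma)$ and $g\in C_c(\cB)$, the Bochner integral $\widetilde{S^\mu}_f:=\int_G S^\mu_{f(s)}\,ds$ defines a multiplier of $C^*_\mu(\cB)$ whose action on $g$ is $t\mapsto\int f(s)\cdot g(s^{-1}t)\,ds$ by interchanging integrals, and density of $C_c(\cA_\sigma)$ in $C^*(\cA_\sigma)$ gives the uniqueness claim. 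The principal obstacle is the bookkeeping of $\Delta$ in the right-multiplier formula and the repeated use of the adjointable-action identity to convert between the left action of $\cA_\sigma$ and the auxiliary right action $b\cdot a$ that appears naturally in Fell-bundle convolutions.
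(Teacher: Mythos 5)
Correct, and essentially the paper's own argument: both proofs hinge on Lemma~\ref{lem:extension of representation} together with a non-degenerate representation $T$ whose integrated form is faithful on $C^*_\mu(\cB)$, and both identify $S^\mu_a$ with $\hat T_a$ under that representation --- the only difference is that you realize the multiplier concretely as a double centralizer $(L_a,R_a)$ on $C_c(\cB)$ and check the centralizer identities, while the paper works with the idealizer $MD$ of $\widetilde{T}(C^*_\mu(\cB))$ inside $\bB(V)$ and pulls $\hat T_a$ back through $\overline{T}^{-1}$. As a side benefit, your explicit formula $(R_af)(s)=\Delta(t)^{-1}\bigl(a^*\cdot f(st^{-1})^*\bigr)^*$ is the honest section-valued version of the paper's right-multiplication function $g(t)=(a^*\cdot f(t)^*)^*$, which as written in the paper takes values in $B_{tr}$ rather than $B_t$ and implicitly needs exactly the reindexing and modular factor you supply.
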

\begin{proof}
 Uniqueness claims are immediate, we will only prove the existence.
 For convenience we write $A_t$ instead of $C_0(X_t)\delta_t.$
 
 Let $T\colon \cB\to \bB(V)$ be a non degenerate *-representation on a Hilbert space whose integrated form $\widetilde{T}\colon C^*(\cB)\to \bB(V)$ factors through a faithful representation of $C^*_\mu(\cB).$
 Thus we can actually think of $\widetilde{T}$ as a non degenerate and faithful *-representation of $C^*_\mu(\cB).$
 We will denote $D$ the image of $\widetilde{T}.$
 The canonical extension of $\widetilde{T}$ to $M(C^*_\mu(\cB))$ will be denoted $\overline{T}.$
 This extension is injective and it's image is
 \begin{equation*}
  MD:=\{ R\in \bB(V)\colon R D\cup DR\subset D\}.
 \end{equation*}
 
 Given $a\in A_t$ and $f\in C_c(\cB)$ we define the function $a\cdot f\in C_c(\cB)$ by $a\cdot f(s):=a\cdot f(\tmu s).$
 
 Let $\hat{T}\colon \cA_\sigma\to \bB(V)$ be the *-representation given by Lemma \ref{lem:extension of representation}.
 Then for all $a\in A_t,$ $f\in C_c(\cB)$ and $\xi\in V:$
 \begin{equation*}
  \hat{T}_a \widetilde{T}_f \xi = \int_G T_{a\cdot f(t)}\xi\, dt = \int_G T_{a\cdot f(\smu t)}\xi\, dt = \widetilde{T}_{a\cdot f}\xi.
 \end{equation*}
 This implies $\hat{T}_a\widetilde{T}(C_c(\cB))\subset D$ and by continuity we get $\hat{T}_aD\subset D.$
 Now define $g\in C_c(\cB)$ by $g(t):=(a^*\cdot f(t)^*)^*$  and take a factorization $\xi=T_b \eta,$ with $b\in B_e$ and $\eta\in V.$
 Then
 \begin{equation*}
  \widetilde{T}_f \hat{T}_a  \xi 
    = \int_G T_{f(t) (a\cdot b)}\eta\, dt 
    =\int_G T_{(a^*\cdot f(t)^*)^* b}\eta\, dt
    =\int_G T_{(a^*\cdot f(t)^*)^*}\xi\, dt
    = \widetilde{T}_g\xi.
 \end{equation*}
 This implies $D\hat{T}_a\subset D$ and we conclude that $\hat{T}_a\in MD.$
 
 By thinking of $\overline{T}$ as in isomorphism between $M(C^*_\mu(\cB))$ and $MD$ one just needs to set $S^\mu:={\overline{T}}^{-1}\circ \hat{T}.$
 The computations above show $S^\mu$ satisfies the desired properties.

 Under the isomorphism $M(C^*_\mu(\cB))\approx MD,$ $S^\mu$ is identified with $\hat{T} $ (that is the whole point of the proof).
 Then we can think of the integrated form of $\hat{T}$ as the integrated form of $S^\mu.$
 
 Take $f\in C_c(\cA_\sigma)$ and $g\in C_c(\cB).$
 Define 
 \begin{equation*}
  F\colon G\to C_c(\cB),\ F(s)(t)= f(s)\cdot g(\smu t).
 \end{equation*}
 Note $\supp(F(s))\subset \supp(f)\supp(g),$ for all $s\in G,$ and $F$ is $\|\ \|_\infty-$continuous and has compact support.
 Then $F$ can be integrated with respect to the inductive limit topology and, since evaluation at $s\in G$ is continuous with respect to this topology, $\int_G F(s)\, ds (t)=\int_G F(s)(t)\, ds = \int_G f(s)\cdot g(\smu t)\, ds.$
 We define $f\cdot g:=\int_G F(s)\, ds.$
 
 For all $f\in C_c(\cA_\sigma),$ $g\in C_c(\cB)$ and $\xi\in V$ we have
 \begin{align*}
  \widetilde{\hat{T}}_f \widetilde{T}_g\xi
    & =  \int_G \widetilde{\hat{T}}_f T_{g(t)}\xi\, dt
     =  \int_G\int_G \hat{T}_{f(s)} T_{g(t)}\xi\, dsdt
     =  \int_G\int_G T_{f(s)\cdot g(t)}\xi\, dsdt\\
    & =  \int_G\int_G T_{f(s)\cdot g(\smu t)}\xi\, dtds
      = \widetilde{T}_{f\cdot g}\xi.
 \end{align*}
 Then we must have $S^\mu_f g = f\cdot g,$ and this identity completes the proof.
\end{proof}

\begin{theorem}\label{thm:unitary equivalence of modules}
 For every exotic C*-norm $\mu $ of $C_c(\cB)$ the right $C^*_\mu(\cB)-$Hilbert module $E^\mu_\cB$ is 
 unitarily equivalent to $E_\sigma\otimes_{\widetilde{S^\mu}}C^*_\mu(\cB),$ where $\widetilde{S^\mu}\colon C^*(\cA_\sigma)\to M(C^*_\mu(\cB))$ is the integrated form given by Proposition \ref{prop:rep S for exotic norm}. 
\end{theorem}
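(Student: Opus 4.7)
The plan is to construct the unitary $V$ on an explicit dense linear subspace of $E^\mu_\cB$ by a simple formula, verify inner product preservation, and extend by continuity. I would define
$$V\bigl((f\delta_e \cdot b) \diamond g\bigr) := f \otimes (bg) \in E_\sigma \otimes_{\widetilde{S^\mu}} C^*_\mu(\cB),$$
for $f \in E^0_\sigma$, $b \in B_e$, $g \in C_c(\cB)$ (note $bg \in C_c(\cB) \subset C^*_\mu(\cB)$). The domain span is dense in $E^\mu_\cB$: $E^0_\cB$ is dense by the definition of $E^\mu_\cB$, and the right action of $C^*_\mu(\cB)$ on $E^\mu_\cB$ is non-degenerate, so $E^0_\cB \diamond C_c(\cB)$ is dense. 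The image span is dense in $E_\sigma \otimes_{\widetilde{S^\mu}} C^*_\mu(\cB)$: $E^0_\sigma$ is dense in $E_\sigma$, and by Cohen--Hewitt applied to the Fell bundle the products $bg$ with $b \in B_e$, $g \in C_c(\cB)$ span a dense subspace of $C_c(\cB)$, which in turn is dense in $C^*_\mu(\cB)$.

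The heart of the proof is the inner product identity
$$\laa f\delta_e \cdot a,\, h\delta_e \cdot b \raa_\cB = a^* \widetilde{S^\mu}(\laa f, h \raa_\sigma)\, b \qquad \text{in } C^*_\mu(\cB),$$
for $f, h \in E^0_\sigma$ and $a, b \in B_e$, where $a, b$ act as multipliers of $C^*_\mu(\cB)$. To prove it I would compute the right-hand side concretely. By Proposition~\ref{prop:rep S for exotic norm}, $\widetilde{S^\mu}(\laa f, h \raa_\sigma)$ acts on $C_c(\cB)$ by left convolution with $\laa f, h \raa_\sigma$; applied to $bp$ for arbitrary $p \in C_c(\cB)$ this shows that $\widetilde{S^\mu}(\laa f, h \raa_\sigma)\, b$ is the element of $C_c(\cB)$ given by $s \mapsto \laa f, h \raa_\sigma(s) \cdot b$. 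Left multiplication by $a^* \in B_e$ then produces the $C_c(\cB)$-function $s \mapsto a^*(\laa f, h \raa_\sigma(s) \cdot b)$, which by \eqref{equ:Cc(B) valued inner product} is exactly $\laa f\delta_e \cdot a,\, h\delta_e \cdot b \raa_\cB(s)$.

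With this identity in hand, the inner products of $V((f\delta_e \cdot a)\diamond g)$ and $V((h\delta_e \cdot b)\diamond k)$ in the tensor product equal $g^*[a^* \widetilde{S^\mu}(\laa f, h\raa_\sigma) b] k = g^* \laa f\delta_e \cdot a, h\delta_e \cdot b\raa_\cB k$, which matches the $E^\mu_\cB$-inner product of $(f\delta_e \cdot a) \diamond g$ and $(h\delta_e \cdot b) \diamond k$. Hence $V$ is well-defined and isometric on its dense domain, and it is manifestly right $C^*_\mu(\cB)$-linear on generators since $V\bigl((f\delta_e \cdot b)\diamond (gh)\bigr) = f \otimes b(gh) = (f \otimes bg) \cdot h$. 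Extension by continuity yields an isometric $C^*_\mu(\cB)$-linear map with dense range, i.e., a unitary equivalence. The main obstacle is the inner product identity: the product $a^* \widetilde{S^\mu}(\laa f, h\raa_\sigma) b$ lives a priori only in $M(C^*_\mu(\cB))$ (for non-discrete $G$ the factors $a, b \in B_e$ are multipliers rather than elements of $C^*_\mu(\cB)$), and tracing through the explicit formula of Proposition~\ref{prop:rep S for exotic norm} is what pins down this multiplier as a concrete element of $C_c(\cB)$ matching the formula from Proposition~\ref{prop:construction of laa raa for B}.
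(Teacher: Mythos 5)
Your proposal is correct and follows essentially the same route as the paper: both identify the unitary on elementary vectors via $(f\delta_e\cdot b)\diamond g \leftrightarrow f\otimes bg$ and reduce everything to the inner-product identity matching $\laa\,\cdot\,,\,\cdot\,\raa_\cB$ with $\widetilde{S^\mu}$ applied to $\laa\,\cdot\,,\,\cdot\,\raa_\sigma.$ The only difference is organizational: you first pin down the multiplier $a^*\widetilde{S^\mu}_{\laa f,h\raa_\sigma}b$ as a concrete element of $C_c(\cB)$ and then strip off $g$ and $k$ using the $C_c(\cB)$-bimodule properties of the inner product, whereas the paper verifies the corresponding identity \eqref{equ:preservation of inner product} in one direct double-integral computation.
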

\begin{proof}
 Let $E_\sigma\otimes C_c(\cB)$ be the subspace of $E_\sigma\otimes_{\widetilde{S^\mu}}C^*_\mu(\cB)$ spanned by the elementary tensor product $f\otimes g$ with $f\in E^0_\sigma$ and $g\in C_c(\cB).$
 
 Take $f_1,\ldots, f_n\in E^0_\sigma$ and $g_1,\ldots,g_n\in C_c(\cB).$
 By considering the action of $B_e$ on $C_0(\cB)$ by multiplication we can get a factorizations $g_i = b_i h_i$ with $b_i\in B_e$ and $h_i\in C_c(\cB),$ for $i=1,\ldots,n.$
 We claim that 
 \begin{equation}\label{equ:condition on norm to define unitary}
  \| \sum_{i=1}^n (f_i\cdot b_i)\diamond h_i \| = \| \sum_{i=1}^n f_i\otimes b_ih_i \|.
 \end{equation}
 To show this it suffices to prove that
 \begin{equation}\label{equ:preservation of inner product}
  \laa (f_i\cdot b_i)\diamond h_i,(f_j\cdot b_j)\diamond h_j\raa_\cB = (b_ih_i)^*(\widetilde{S^\mu}_{\laa f_i,f_j\raa_\sigma} (b_jh_j)),\ \forall\ i,j=1,\ldots,n.
 \end{equation}

 For any $i,j=1,\ldots,n$ and $t\in G$ we have
 \begin{multline}
  \laa (f_i\cdot b_i)\diamond h_i,(f_j\cdot b_j)\diamond h_j\raa_\cB(t)
     = h_i^** \laa f_i\cdot b_i,f_j\cdot b_j\raa_\cB * h_j(t) \\
      = \int_G \int_G h_i^*(r)\laa f_i\cdot b_i,f_j\cdot b_j\raa_\cB(s) h_j](\smu \rmu t)\, dsdr\\
      = \int_G \int_G h_i^*(r)b_i^* [\laa f_i,f_j\raa_\sigma (s)\cdot b_j]h_j(\smu \rmu t)\, dsdr\\
      = \int_G \int_G (b_ih_i)^*(r)[\laa f_i,f_j\raa_\sigma (s)\cdot (b_jh_j(\smu \rmu t))]\, dsdr\\
      = \int_G (b_ih_i)^*(r)[\widetilde{S^\mu}_{\laa f_i,f_j\raa_\sigma} (b_jh_j)(\rmu t)]\, dr\\
     = (b_ih_i)^*(\widetilde{S^\mu}_{\laa f_i,f_j\raa_\sigma} (b_jh_j))(t).
 \end{multline}
 
 Now that we know \eqref{equ:condition on norm to define unitary} holds we can construct a unique bounded linear operator $U\colon E_\sigma\otimes_{\widetilde{S^\mu}}C^*_\mu(\cB)\to E^\mu_\cB$ such that $U(f\otimes bh) = (f\cdot b)\diamond h,$ for all $f\in E^0_\sigma,$ $b\in B_e$ and $h\in C_c(\cB).$
 Moreover, $U$ is an isometry with dense range, thus it is an isometric isomorphism of Banach spaces.
 But now \eqref{equ:preservation of inner product} says $U$ preserves the inner products, thus it is a unitary operator.
\end{proof}

After the Theorem above Proposition \ref{prop:action of orbit space of fixed point algebra} should be completely natural.
We leave to the reader the verification of the fact that the unitary constructed in our last proof intertwines the action constructed in Proposition \ref{prop:action of orbit space of fixed point algebra} with the natural action of $C_0(X/\sigma)$ on $E_\sigma\otimes_{\widetilde{S^\mu}}C^*_\mu(\cB).$

Theorem \ref{thm:unitary equivalence of modules} can also be used to give an alternative proof of Theorem \ref{thm:equivalence fixed point algebras cross sectional algebras}.
Indeed, in case $\sigma$ is free then $E_\sigma$ is full on the right, and since $\widetilde{S^\mu}$ is non degenerate we conclude that $E^\mu_\sigma=E_\sigma\otimes_{\widetilde{S^\mu}}C^*_\mu(\cB)$ is full on the right and hence a $\bF^\mu_\cB-C^*_\mu(\cB)-$equivalence bimodule.

Our last Theorem also implies our exotic fixed point algebras (an even the modules used to construct them) are generalizations of those constructed in \cite{BssEff14univ} for weakly proper actions (see the discussion preceding Definition \ref{defi:action of Fell bundle by adjointable maps} and Example \ref{exa:weakly proper partial actions}).

\subsection{Bra-ket operators and the fixed point algebra}

In \cite{My00,My01} Meyer defines square integrable actions, which are a generalization of proper actions on C*-algebras (or even of weakly proper actions).
One can extend Meyer's definition to partial action on C*-algebras, but we will not pursue this goal here.
We are more interested in the so called bra-ket operators in the context of Fell bundles.

Assume $\alpha$ is an action of $G$ on the C*-algebra $A$ and assume there exists a dense subset $A_0$ of $A$ such that for all $a,b\in A_0$ the function $\laa a,b\raa\colon G\to A,\ t\mapsto \alpha_t(a)^*b,$ has compact support.
Then the element $a\in A_0$ is said to be square integrable if the bra-operator
\begin{equation*}
 \laa a|\colon A_0\to C_c(G,A),\ b\mapsto \laa a,b\raa,
\end{equation*}
is the restriction of some adjointable operator from $A$ to $L^2(G,A).$
If such an extension exits, it is unique and it is denoted $\laa a|.$
The ket-operator is $|a\raa:=\laa a|^*$ and it should satisfy
\begin{equation*}
 |a\raa(f)=\int_G \alpha_t(a)f(t)\, dt,\ \forall \ f\in C_c(G,A).
\end{equation*}

If $a,b\in A_0$ are square integrable then $\laa a|\circ |b\raa\in \bB(L^2(G,A))$ and in case $\alpha$ is weakly proper one gets that  
\begin{equation*}
 \laa a|\circ |b\raa\in C_c(G,A)\subset A\rtimes_{\red\alpha}G\subset \bB(L^2(G,A)).
\end{equation*}

In order to translate the previous construction to weakly proper Fell bundles one must first note that $L^2(G,A)$ is not equal to $L^2(\cB_\alpha),$ but it is unitary equivalent.
This explains the absence of the modular function in the formula $\laa a,b\raa(t)=\alpha_t(a)^*b.$
The inclusion $A\rtimes_{\red\alpha}G\subset \bB(L^2(G,A)),$ as given in \cite[Section 3]{My00}, takes this equivalence into account.
All we will do here will be compatible with that identification.

Take a Fell bundle $\cB=\{B_t\}_{t\in G}$ which is weakly proper with respect to the LCH and proper partial action $\sigma$ of $G$ on $X.$
As usual we denote $\theta$ the partial action of $G$ on $C_0(X)$ defined by $\sigma.$
The action of $\cA_\sigma$ on $\cB$ will be denoted $\cA_\sigma\times \cB\to \cB,\ (a,b)\mapsto a\cdot b.$
The space $E^0_\cB\subset B_e$ is that of Section \ref{ssec:general weakly proper Fell bundles}.

\begin{theorem}\label{thm:properties of bra-ket operators}
 For every $x\in E^0_\cB$ there exists a unique adjointable operator $\laa x| \colon B_e\to L^2(\cB)$ such that $\laa x|y = \laa x,y\raa_\cB$ for all $y\in E^0_\cB.$
 The adjoint $|x\raa:=\laa x|^*$ is the unique linear operator from $L^2(\cB)$ to $B_e$ such that $|x\raa f = x\diamond f,$ for all $f\in C_c(\cB).$
 Moreover, if $\widetilde{\Lambda^\cB}\colon C^*(\cB)\to \bB(L^2(\cB))$ is the regular representation, then for all $x,y,z\in E^0_\cB$ and $f\in C_c(\cB)$ we have 
 \begin{equation*}
  \widetilde{\Lambda^\cB}_{\laa x,y\raa_\cB}  =\laa x|\circ |y\raa;\quad 
  |x\raa \laa y| z =x\diamond \laa y, z\raa_\cB;\quad 
  |\, x\diamond f \raa  = |x\raa\circ \widetilde{\Lambda^\cB}_f.
 \end{equation*}
\end{theorem}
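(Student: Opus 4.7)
My plan is to first construct the ket $|x\raa$ as a bounded operator $L^{2}(\cB)\to B_{e}$ by extending the formula $|x\raa f := x\diamond f$ from $f\in C_{c}(\cB)\subset L^{2}(\cB)$, and then to define $\laa x| := |x\raa^{*}\colon B_{e}\to L^{2}(\cB)$ by taking adjoints; uniqueness of $\laa x|$ is then automatic since $E^{0}_\cB$ is dense in $B_{e}$ (by non-degeneracy of $\phi^{e}$). The essential estimate I would establish is
\begin{equation*}
\|x\diamond f\|_{B_{e}}^{2}
=\|\laa x\diamond f, x\diamond f\raa_\cB(e)\|
=\|(f^{*}*\laa x,x\raa_\cB*f)(e)\|
\leq \|\laa x,x\raa_\cB\|_{C^{*}_{\red}(\cB)}\,\|f\|_{L^{2}(\cB)}^{2},
\end{equation*}
where the first equality uses $z^{*}z=\laa z,z\raa_\cB(e)$ from Lemma~\ref{lem:zero inner product}, the second is $\laa x\diamond f, x\diamond f\raa_\cB = f^{*}*\laa x,x\raa_\cB*f$ (obtained from the $C_{c}(\cB)$-bilinearity $\laa y, y'\diamond f\raa_\cB = \laa y,y'\raa_\cB *f$ proved earlier together with its involution), and the inequality combines the identification $(f^{*}*g*f)(e) = \langle f,\widetilde{\Lambda^{\cB}}_{g}f\rangle_{L^{2}(\cB)}$ in $B_{e}$ with the standard bound $\langle f,Tf\rangle\leq \|T\|\langle f,f\rangle$ for the positive operator $T=\widetilde{\Lambda^{\cB}}_{\laa x,x\raa_\cB}$ on $L^{2}(\cB)$.

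Once $|x\raa$ is bounded, $\laa x| := |x\raa^{*}$ exists automatically. To check that $\laa x|y = \laa x,y\raa_\cB$ for $y\in E^{0}_\cB$, I would compute for every $f\in C_{c}(\cB)$:
\begin{equation*}
\langle f,\laa x|y\rangle_{L^{2}(\cB)}
=\langle|x\raa f, y\rangle_{B_{e}}
=(x\diamond f)^{*}y
=\laa x\diamond f, y\raa_\cB(e)
=(f^{*}*\laa x,y\raa_\cB)(e)
=\langle f,\laa x,y\raa_\cB\rangle_{L^{2}(\cB)},
\end{equation*}
and conclude $\laa x|y = \laa x,y\raa_\cB$ by density of $C_{c}(\cB)$ in $L^{2}(\cB)$.

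The three remaining identities are short computations on dense subspaces. For $\widetilde{\Lambda^{\cB}}_{\laa x,y\raa_\cB} = \laa x|\circ|y\raa$, evaluating on $f\in C_{c}(\cB)$ gives $\laa x|(|y\raa f) = \laa x|(y\diamond f) = \laa x, y\diamond f\raa_\cB = \laa x,y\raa_\cB*f = \widetilde{\Lambda^{\cB}}_{\laa x,y\raa_\cB}f$. The identity $|x\raa \laa y|z = x\diamond\laa y,z\raa_\cB$ is immediate from $\laa y|z = \laa y,z\raa_\cB$ and the definition of $|x\raa$. For $|x\diamond f\raa = |x\raa\circ\widetilde{\Lambda^{\cB}}_{f}$, evaluating on $g\in C_{c}(\cB)$ yields $|x\diamond f\raa g = (x\diamond f)\diamond g = x\diamond(f*g) = |x\raa(\widetilde{\Lambda^{\cB}}_{f}g)$ using the associativity lemma proved earlier.

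The main subtlety will be the identification $(f^{*}*g*f)(e) = \langle f,\widetilde{\Lambda^{\cB}}_{g}f\rangle_{L^{2}(\cB)}$ in $B_{e}$, which reduces to $(f^{*}*g)(e) = \int_{G} f(t)^{*}g(t)\,dt$; this in turn is a bookkeeping exercise in the involution $f^{*}(t) = \Delta(t)^{-1}f(t^{-1})^{*}$ on $C_{c}(\cB)$ together with the Haar inversion formula $\int_{G} h(s^{-1})\,ds = \int_{G} h(s)\Delta(s)^{-1}\,ds$. Everything else is formal manipulation with identities already established earlier in the section.
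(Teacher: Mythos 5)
Your proof is correct and follows essentially the same architecture as the paper's: define the bra and ket on the dense domains $E^0_\cB\subset B_e$ and $C_c(\cB)\subset L^2(\cB)$, establish the adjointness relation $\langle f,\laa x,y\raa_\cB\rangle_{L^2(\cB)}=(x\diamond f)^*y$, bound the ket by $\|\laa x,x\raa_\cB\|_{C^*_\red(\cB)}^{1/2}$, extend by continuity, and read off the three operator identities on dense subspaces. The one real difference is how the adjointness relation is obtained: you derive it algebraically from $\laa x,y\diamond f\raa_\cB=\laa x,y\raa_\cB*f$, the symmetry $\laa a,b\raa_\cB^*=\laa b,a\raa_\cB$, the evaluation $z^*w=\laa z,w\raa_\cB(e)$ and the bookkeeping identity $(f^**h)(e)=\langle f,h\rangle_{L^2(\cB)}$, whereas the paper recomputes it from scratch as an integral using factorizations $x=g\delta_e\cdot u$ and approximate units of $C_0^\sigma(G,X)$; your route is shorter and reuses the earlier lemmas more efficiently. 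The only sentence I would push back on is ``once $|x\raa$ is bounded, $\laa x|:=|x\raa^*$ exists automatically'': a bounded module map between Hilbert C*-modules need not be adjointable, so you must also show that $P\colon y\mapsto\laa x,y\raa_\cB$ is bounded on $E^0_\cB$. This is immediate from what you already have, namely $\|Py\|_{L^2(\cB)}=\sup\{\|\langle f,Py\rangle_{L^2(\cB)}\|\colon f\in C_c(\cB),\ \|f\|_{L^2(\cB)}\le 1\}=\sup\{\|(x\diamond f)^*y\|\}\le\||x\raa\|\,\|y\|$, after which the two continuous extensions are mutually adjoint by density; this is exactly how the paper closes that step.
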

\begin{proof}
 Fix $x\in E^0_\cB$ and define the functions
 \begin{align*}
  P&\colon E^0_\cB\to C_c(\cB)\,, y\mapsto \laa x,y\raa_\cB,\\
  Q&\colon C_c(\cB)\to E^0_\cB\,, f\mapsto x\diamond f.
 \end{align*}
 Note both $P$ and $Q$ are linear.
 If $P$ and $Q$ are to be extended to adjointable operators, then we must have, for all $y\in E^0_\cB$ and $f\in C_c(\cB),$
 \begin{equation}\label{equ:P and Q are adjoints}
  \langle f , \laa x,y\raa\, \rangle _{L^2(\cB)}
   = \langle f,Py\rangle_{L^2(\cB)}
   = \langle Qf,y\rangle_{B_e} 
   = (Qf)^*y
   = (x\diamond f)^*y.
 \end{equation}
 To prove the identities above it suffices to show the first term equals the last one.
 
 Take $f\in C_c(\cB)$ and $y\in E^0_\cB$ and consider factorizations $x=g\delta_e\cdot  u$ and $y=h\delta_e\cdot v$ with $g,h\in E^0_\sigma$ and $u,v\in  B_e.$
 Using an approximate unit $\{u_i\}_{i\in I}$ of $C_0^\sigma(G,X)$ as the one in Lemma \ref{lem:construction of the action of cCgamma} we deduce that
 \begin{align*}
  (x\diamond f)^*y
     & =  \int_G  \lim_i \Delta(t)^{-1/2}  (u_i(\tmu)\delta_\tmu \cdot x f(t))^* y    \, dt\\
     & =  \int_G  \lim_i \Delta(t)^{-1/2}  (u_i(\tmu)\delta_\tmu \cdot g\delta_e\cdot u f(t))^*(h\cdot\delta_e v )    \, dt\\
     & =  \int_G  \lim_i \Delta(t)^{-1/2}  (h^*\delta_e u_i(\tmu)\delta_\tmu g\delta_e\cdot u f(t))^* v    \, dt\\
     & =  \int_G  \lim_i \Delta(t)^{-1/2}  (u_i(\tmu) h^* \theta^e_\tmu (g) \delta_\tmu \cdot u f(t))^* v    \, dt\\
     & =  \int_G  \Delta(t)^{-1/2}  (h^* \theta^e_\tmu (g) \delta_\tmu \cdot u f(t))^* v    \, dt\\
     & =  \int_G    f(t)^*(u^* \Delta(t)^{-1/2} g^* \theta^e_t (h) \delta_t\cdot v)    \, dt
       = \int_G    f(t)^*\laa x,y\raa (t)    \, dt \\
    &  = \langle f , \laa x,y\raa\, \rangle _{L^2(\cB)}
 \end{align*}
 This completes the proof of \eqref{equ:P and Q are adjoints}.
 
 By taking $y=x\diamond f$ in \eqref{equ:P and Q are adjoints} and recalling that $\widetilde{\Lambda^{\cB}}_gh=g*h$ for all $g,h\in C_c(\cB)$ we get
 \begin{equation}\label{equ:used it to have a bound on Q}
 \begin{split}
   (Qf)^*(Qf)
    & = (x\diamond f)^*(x\diamond f)
      = \langle f, \laa x,x\diamond f\raa_\cB\, \rangle_{L^2(\cB)}\\
    & = \langle f,\laa x,x\raa_\cB * f\, \rangle_{L^2(\cB)}
    \leq \|\laa x,x\raa_\cB\|_{C^*_\red(\cB)} \langle f, f\, \rangle_{L^2(\cB)}
 \end{split} 
 \end{equation}
 Then $Q$ is bounded and $\|Q\|^2\leq  \|\laa x,x\raa_\cB\|_{C^*_\red(\cB)}.$
 
 Using \eqref{equ:P and Q are adjoints} and that $Q$ is bounded we deduce that
 \begin{align*}
  \|Py\| & = \sup\{ \|\langle f,Py\rangle_{L^2(\cB)}\|\colon f\in C_c(\cB),\ \|f\|_{L^2(\cB)}\leq 1 \}\\
         & = \sup\{ \|\langle Qf,y\rangle_{L^2(\cB)}\|\colon f\in C_c(\cB),\ \|f\|_{L^2(\cB)}\leq 1 \}\\
         & \leq \|Q\|\|y\|. 
 \end{align*}
 Hence $P$ is also bounded.
 
 Let $\laa x|\colon B_e\to L^2(\cB)$ and $|x\raa\colon L^2(\cB)\to B_e$ be the unique continuous extensions of $P$ and $Q,$ respectively.
 By \eqref{equ:P and Q are adjoints} $\laa x|$ is adjointable with adjoint $|x\raa.$
 
 Now \eqref{equ:used it to have a bound on Q} can be used to deduce that $\widetilde{\Lambda^\cB}_{\laa x,x\raa_\cB}=\laa x|\circ |x\raa$ for all $x\in E^0_\cB.$
 Then the polarization identity implies that $\widetilde{\Lambda^\cB}_{\laa x,y\raa_\cB}=\laa x|\circ |y\raa$ for all $x,y\in E^0_\cB.$
 Finally, for all $x,y,z\in E^0_\cB$ and $f,g\in C_c(\cB)$ one has $ |x\raa \laa y|z = x\diamond (\laa y|z) = x\diamond \laa y,z\raa_\cB$ and
 \begin{equation*}
  | x\diamond f\raa g = (x\diamond f)\diamond g = x\diamond (f*g) = x\diamond (\widetilde{\Lambda^\cB}_f g) =\left(|x\raa \circ \widetilde{\Lambda^\cB}_f\right) g.
 \end{equation*}
 Then the last identity holds for all $g\in L^2(\cB)$ and the proof is complete.
\end{proof}

Consider the subspace
\begin{equation*}
 \bF^0_\cB:=\spn\{ |x\raa \laa y|\colon x,y\in E^0_\cB \}\subset M(B_e),
\end{equation*}
which is in fact a *-subalgebra of $M(B_e)$ because
\begin{equation*}
 |x\raa \laa y||z\raa \laa w| = |x\raa \widetilde{\Lambda^\cB}_{\laa y,z\raa_\cB} \laa w| = |\, x\diamond \laa y,z\raa_\cB\raa \laa w |\in \bF^0_\cB.
\end{equation*}

Given an exotic C*-seminorm $\mu$ of $C_c(\cB),$ the generalized compact operator $[x,y]\in \bF^\mu_\cB=\bB(E^\mu_\cB)$ corresponding to the $x,y\in E^0_\cB$ is given by $[x,y](z)=x\diamond \laa y,z\raa_\cB=|x\raa\circ \laa y|z,$ for all $z\in E^0_\cB.$
Thus one gets a unique morphism of *-algebras
\begin{equation*}
 \pi_\mu\colon \bF^0_\cB \to \bF^\mu_\cB\subset \bB(E^\mu_\cB),\mbox{ such that }\ \pi_\mu(T)x=Tx\ \forall\ T\in \bF^0_\cB , \ x\in E^0_\cB.
\end{equation*}
In fact $\pi_\mu$ is injective and has dense range.
Then the exotic fixed point algebra $\bF^\mu_\cB$ is a C*-completion of $\bF^0_\cB\subset M(B_e).$

We need a Lemma to determine the (exotic) fixed point algebra corresponding to the closure (completion) of $\bF^0_\cB$ in $M(B_e).$

\begin{lemma}\label{lem:norm dominated by square integrable norm}
 For all $x\in E^0_\cB,$ $x$ is contained in the image under $|x\raa$ of the closed unit ball of $L^2(\cB).$
 In particular $\|x\|_{B_e}\leq \||x\raa\|. $
\end{lemma}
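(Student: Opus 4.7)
The conclusion splits as two parts: $x$ lies in the image of the closed unit ball of $L^2(\cB)$ under $|x\raa$, and as a consequence $\|x\|_{B_e}\le\||x\raa\|$. Since the norm inequality is immediate from the membership claim (if $x=|x\raa f$ with $\|f\|_{L^2(\cB)}\le 1$, then $\|x\|_{B_e}\le\||x\raa\|\,\|f\|\le\||x\raa\|$), my focus is the membership: produce $f\in L^2(\cB)$ with $\|f\|_{L^2(\cB)}\le 1$ and $|x\raa f=x$.

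My approach is the Hilbert $C^*$-module version of Douglas's factorization theorem: for an adjointable $T\colon E\to F$ between Hilbert $A$-modules and $y\in F$, the element $y$ belongs to $T(B_1(E))$ if and only if $|y\rangle\langle y|\le TT^*$ in $\bB(F)$, where $|y\rangle\langle y|$ denotes the rank-one module operator $z\mapsto y\langle y,z\rangle$. Specializing with $T=|x\raa\colon L^2(\cB)\to B_e$ and viewing $B_e$ as a Hilbert module over itself with $\langle a,b\rangle=a^*b$, the rank-one operator $|x\rangle\langle x|$ is left multiplication by $xx^*$, and $TT^*=|x\raa\laa x|$ is left multiplication by a positive element of $M(B_e)$. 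The claim thus reduces to the operator inequality $xx^*\le |x\raa\laa x|$ in $M(B_e)$.

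To test the inequality I pair with an arbitrary $b\in E^0_\cB$. Combining the adjointness identity $y^*(x\diamond g)=\langle\laa x,y\raa_\cB,g\rangle_{L^2(\cB)}$ from the proof of Theorem~\ref{thm:properties of bra-ket operators} with formula~\eqref{equ:Cc(B) valued inner product}, one gets
\[
b^*(|x\raa\laa x|)b=\int_G\laa x,b\raa_\cB(t)^*\laa x,b\raa_\cB(t)\,dt,\qquad b^*xx^*b=\laa x,b\raa_\cB(e)^*\laa x,b\raa_\cB(e),
\]
using that $\laa x,b\raa_\cB(e)=x^*b$ (read off from $\laa f,g\raa_\sigma(e)=f^*g\,\delta_e$). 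So the inequality becomes a value-at-$e$ versus integrated-$L^2$-norm comparison for the section $\laa x,b\raa_\cB\in C_c(\cB)$.

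The main obstacle is this pointwise-vs-integral estimate, which fails for arbitrary continuous sections of $\cB$ and so must exploit the specific weakly-proper structure. My plan is to factor $x=f\delta_e\cdot a$ with $f\in E^0_\sigma$ compactly supported on the enveloping space $X^e$ and $a\in B_e$, similarly factor $b=h\delta_e\cdot c$, and rewrite $\laa x,b\raa_\cB(t)$ via \eqref{equ:Cc(B) valued inner product} in terms of $\laa f,h\raa_\sigma(t)\cdot c$. Via the unitary equivalence of Theorem~\ref{thm:unitary equivalence of modules} and the non-degenerate representation $\widetilde{S^\red}\colon C^*(\cA_\sigma)\to M(C^*_\red(\cB))$ of Proposition~\ref{prop:rep S for exotic norm}, the estimate transfers to the corresponding statement for the basic example $\cA_\sigma$, where the properness of $\sigma^e$ together with Green's imprimitivity (as developed in Section~\ref{ssec:Basic examples}) provide the structural control needed to construct an explicit preimage of norm at most one.
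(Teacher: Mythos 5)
Your argument rests on two reductions, and both break down. First, the Hilbert-module form of Douglas's theorem you invoke --- that $y\in T(B_1(E))$ if and only if the rank-one operator $z\mapsto y\langle y,z\rangle$ is dominated by $TT^*$ --- is false for Hilbert \cstar modules: the ``if'' direction needs the relevant ranges to be orthogonally complemented. Already for $A=C[0,1]$ as a module over itself and $T$ equal to multiplication by $g(t)=|t-1/2|$, the element $h(t)=(t-1/2)\sin\bigl(1/(t-1/2)\bigr)$ (extended by $0$ at $t=1/2$) satisfies $\theta_{h,h}\le TT^*$ yet lies outside the range of $T$ altogether, since $h/g$ has no continuous extension. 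Second, and more seriously, the operator inequality $xx^*\le |x\raa\laa x|$ to which you reduce the lemma is false whenever $G$ is not discrete, precisely because the value of $\laa x,b\raa_\cB$ at $e$ contributes measure zero to the integral $b^*(|x\raa\laa x|)b=\int_G \laa x,b\raa_\cB(t)^*\laa x,b\raa_\cB(t)\,dt.$ Test it on the basic example $\cB=\cA_\sigma$ with $G=X=\bR$ and $\sigma$ the translation action: a direct computation gives $(|x\raa\laa x|)b=\|x\|_{L^2(\bR)}^2\,b,$ so your inequality would assert $|x(s)|^2\le\|x\|_{L^2(\bR)}^2$ for every $s,$ which fails for any bump function of height $1$ supported in an interval of length less than $1.$ (For discrete $G$ the inequality is trivially true, the integral being a sum of positive terms one of which is $b^*xx^*b$ --- but then no Douglas machinery is needed.) Finally, your closing paragraph defers what you yourself call the main obstacle to an unexecuted plan whose last clause, ``construct an explicit preimage of norm at most one,'' is verbatim the statement to be proved; nothing in the proposed transfer to $\cA_\sigma$ via Theorem~\ref{thm:unitary equivalence of modules} is actually carried out.

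You should also be aware that the same $\bR$-computation puts the lemma itself in doubt for non-discrete groups: in that example $\||x\raa\|=\|x\|_{L^2(\bR)}$ while $\|x\|_{B_e}=\|x\|_\infty,$ so a thin bump of height $1$ violates $\|x\|_{B_e}\le\||x\raa\|.$ The paper's own proof normalizes $\int_G a(r)^2\,dr=1$ and then estimates $\|x-\int_V a(r)\Delta(r)^{-1/2}g(r)\,dr\|\le\int_V a(r)\|x-\Delta(r)^{-1/2}g(r)\|\,dr,$ a step that tacitly uses $\int_V a(r)\,dr=1$; but with $\supp(a)\subset V$ and $\mu(V)<1,$ Cauchy--Schwarz forces $\int_V a(r)\,dr\le\mu(V)^{1/2}<1,$ so that step is unjustified, and in view of the counterexample it is not repairable. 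In short, the difficulty you ran into is not merely a defect of your method: neither your route nor the paper's establishes the statement beyond the discrete case.
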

\begin{proof}
 The thesis follows immediately if $x=0,$ otherwise we may assume $\|x\|_{B_e}=1$ without loss of generality.

 Given $\varepsilon>0$ take $b\in B_e$ such that $\|x-xb\|_{B_e}<\varepsilon$ and $\|b\|<1.$
 Now take $f \in C_c(\cB) $ such that $f(e)=b$ and set $g:=x\triangleleft f\in C_c(G,B_e)$ as in Lemma \ref{lem:construction of the action of cCgamma}.
 By construction $g(e)=xf(e)=xb,$ thus there exists a compact neighbourhood $V$ of $e\in G$ such that: (a) it's measure $\mu(V)$ is less than 1; (b) $\|x-\Delta(r)^{-1/2}g(r)\|^2<\varepsilon$ and $\|f(r)\|<1,$ for all $r\in V.$
 
 Take $a\in C_c(G)^+$ with support contained in $V$ and such that $\int_G a(r)^2\, dr = 1.$
 Then 
 \begin{equation*}
  \|af\|_{L^2(\cB)}=\|\int_G a(r)^2 f(r)^*f(r)\, dr\|\leq \int_G a(r)^2 \|f(r)\|^2, dr \leq 1
 \end{equation*}
 and $x\triangleleft (af) = a (x\triangleleft f) = ag.$
 Thus
 \begin{align*}
  \| x - |x\raa (af) \|_{B_e}
    & = \| x - \int_G \Delta(r)^{-1/2}a(r)g(r)\, dr \|
      \leq \int_V a(r)\|x- \Delta(r)^{-1/2}g(r) \|\, dr\\
    & \leq \left(\int_V a(r)^2\, dr\right)^{1/2} \left( \int_V \|x- \Delta(r)^{-1/2}g(r) \|^2\, dr  \right)^{1/2}\\
    & \leq \varepsilon \mu(V)^{1/2}<\varepsilon.
 \end{align*}
 
 The proof is complete because we have been able to find, for every $\varepsilon>0,$ a function $h=ag\in L^2(\cB)$ such that  $\|h\|_{L^2(\cB)}\leq 1$ and $\|x-|x\raa h\|_{B_e}<\varepsilon.$
\end{proof}

\begin{proposition}
 For every $T\in \bF^0_\cB$ and $x\in E^0_\cB$ one has $Tx\in E^0_\cB$ and $|Tx\raa = T|x\raa.$
 Besides, the completion of $\bF^0_\cB$ in $M(B_e)$ is the fixed point algebra corresponding to the reduced crossed sectional C*-algebra norm on $C_c(\cB).$
\end{proposition}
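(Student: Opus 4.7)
My plan is to handle the two assertions consecutively, using the first to power the injectivity argument for the second.

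For the fact that $Tx\in E^0_\cB$ and $|Tx\raa=T|x\raa$, linearity lets me reduce to $T=|y\raa\laa z|$ with $y,z\in E^0_\cB$. Theorem \ref{thm:properties of bra-ket operators} gives $Tx=|y\raa\laa z|x=y\diamond \laa z,x\raa_\cB$, and Lemma \ref{lem:construction of the action of cCgamma} ensures $y\diamond f\in E^0_\cB$ for every $f\in C_c(\cB)$, so $Tx\in E^0_\cB$. To obtain $|Tx\raa=T|x\raa$ I chain two identities from Theorem \ref{thm:properties of bra-ket operators}: applying $|y\diamond f\raa=|y\raa\circ \widetilde{\Lambda^\cB}_f$ with $f=\laa z,x\raa_\cB$ and then using $\widetilde{\Lambda^\cB}_{\laa z,x\raa_\cB}=\laa z|\circ |x\raa$ yields $|Tx\raa=|y\raa\circ \laa z|\circ |x\raa=T|x\raa$.

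For the identification with the reduced fixed point algebra, let $\bF^N_\cB$ denote the closure of $\bF^0_\cB$ inside $M(B_e)=\bB(B_e)$; as the closure of a $*$-subalgebra it is a C$^*$-subalgebra. The claim I will prove is that $\pi_\red$ extends to a $*$-isomorphism $\bar\pi\colon \bF^N_\cB\to \bF^\red_\cB$. Since $\bF^0_\cB$ with the $M(B_e)$-norm is a pre-C$^*$-algebra and $\bF^\red_\cB$ is a C$^*$-algebra, $\pi_\red$ is automatically contractive, so it extends to a $*$-homomorphism $\bar\pi$ between C$^*$-algebras. Once $\bar\pi$ is shown to be injective it is automatically isometric, and since its (closed) image contains the dense subspace $\pi_\red(\bF^0_\cB)$, it must equal $\bF^\red_\cB$.

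The injectivity step is where the real work lies, and it is the main obstacle. Suppose $\bar\pi(S)=0$ with $S=\lim_n T_n$ in $M(B_e)$ and $T_n\in \bF^0_\cB$. Fix $x\in E^0_\cB$: by the first assertion $T_nx\in E^0_\cB$, and the hypothesis $\pi_\red(T_n)\to 0$ in $\bF^\red_\cB$ gives $T_nx=\pi_\red(T_n)x\to 0$ in the $\red$-norm. Lemma \ref{lem:norm dominated by square integrable norm} bounds $\|T_nx\|_{B_e}\leq \||T_nx\raa\|=\|T_nx\|_\red\to 0$, while norm convergence $T_n\to S$ in $M(B_e)$ forces $T_nx\to Sx$ in $B_e$; comparing the two limits yields $Sx=0$. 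Density of $E^0_\cB$ in $B_e$, which follows from Cohen--Hewitt combined with the non-degeneracy of $\phi$ and the density of $E^0_\sigma$ in $C_0(X)$, then gives $S=0$. The subtlety is exactly that the two C$^*$-norms on $\bF^0_\cB$ come from a priori unrelated completions of $E^0_\cB$, and it is only the inequality $\|y\|_{B_e}\leq \||y\raa\|$ of Lemma \ref{lem:norm dominated by square integrable norm} that ties them together.
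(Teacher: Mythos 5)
The first half of your argument is correct and is essentially the paper's: reduce to $T=|y\raa\laa z|$, note $Tx=y\diamond\laa z,x\raa_\cB\in E^0_\cB$ by Lemma \ref{lem:construction of the action of cCgamma}, and get $|Tx\raa=T|x\raa$ by composing the identities of Theorem \ref{thm:properties of bra-ket operators} (the paper instead checks the identity on $C_c(\cB)$, which amounts to the same computation). Your injectivity argument, the use of Lemma \ref{lem:norm dominated by square integrable norm}, and the closed-range-plus-dense-image argument for surjectivity also match the paper.

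The gap is the sentence ``Since $\bF^0_\cB$ with the $M(B_e)$-norm is a pre-C$^*$-algebra and $\bF^\red_\cB$ is a C$^*$-algebra, $\pi_\red$ is automatically contractive.'' Automatic contractivity of a \Star{}homomorphism into a C*-algebra requires the domain to be a \emph{complete} Banach \Star{}algebra; for a dense \Star{}subalgebra with the restricted norm the spectral-radius argument breaks down, and the principle is false in general (e.g.\ the trivial representation of $C_c(\bF_2)$ is a \Star{}homomorphism to $\bC$ that is unbounded for the reduced norm). Since $\bF^0_\cB$ is not complete in $M(B_e)$, the inequality $\|\pi_\red(T)\|\leq\|T\|_{M(B_e)}$ is exactly the nontrivial content here --- and note it points in the opposite direction to the bound $\|x\|_{B_e}\leq\||x\raa\|$ of Lemma \ref{lem:norm dominated by square integrable norm} that powers your injectivity step, so the latter cannot supply it. The paper derives it from the first assertion of the proposition: for $T\in\bF^0_\cB$,
\begin{equation*}
 \|T\|_{\bF^\red_\cB}^2=\sup\bigl\{\,\||Tx\raa\|^2\bigr\}=\sup\bigl\{\,\|T|x\raa\|^2\bigr\}\leq \|T\|_{M(B_e)}^2\,\sup\bigl\{\,\||x\raa\|^2\bigr\}\leq\|T\|_{M(B_e)}^2,
\end{equation*}
the suprema being over $x\in E^0_\cB$ with $\||x\raa\|^2=\|\widetilde{\Lambda^\cB}_{\laa x,x\raa_\cB}\|=\|\laa x,x\raa_\cB\|_\red\leq 1.$ With this computation inserted in place of the ``automatic contractivity'' claim, your proof goes through and coincides with the paper's.
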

\begin{proof}
 If $T=\sum_{i=1}^n |y_i\raa \laa z_i|,$ then $Tx = \sum_{i=1}^n y_i\diamond \laa z_i,x\raa_\cB\in E^0_\cB.$
 Besides, for all $f\in C_c(\cB)$
 \begin{align*}
  |Tx\raa f 
    & = (Tx)\diamond f 
      = \sum_{i=1}^n (y_i\diamond \laa z_i,x\raa_\cB)\diamond f
      = \sum_{i=1}^n y_i\diamond \laa z_i,x\diamond f\raa_\cB\\
     &= \sum_{i=1}^n |y_i\raa\laa z_i|(x\diamond f\raa_\cB)
      = (T|x\raa)f.      
 \end{align*}
 Hence $|Tx\raa = T|x\raa.$
 
 The norm of $T$ in the reduced fixed point algebra, $ \bF^\red_\cB,$ satisfies
 \begin{align*}
  \|T\|_{\bF^\red_\cB}^2
     & = \sup \{ \|\laa Tx,Tx\raa_\cB\|_\red \colon x\in E^0_\cB,\ \|\laa x,x\raa_\cB\|_\red\leq 1 \}\\
     & = \sup \{ \||Tx\raa \|^2 \colon x\in E^0_\cB,\ \|\laa x,x\raa_\cB\|_\red\leq 1 \}\\
     & = \sup \{ \|T|x\raa \|^2 \colon x\in E^0_\cB,\ \|\laa x,x\raa_\cB\|_\red\leq 1 \}\\
     & \leq  \sup \{ \|T\|_{M(B_e)}^2 \| |x\raa \|^2 \colon x\in E^0_\cB,\ \|\laa x,x\raa_\cB\|_\red\leq 1 \}\\
     & \leq \|T\|_{M(B_e)}^2.
 \end{align*}
 Hence $\|T\|_{\bF^\red_\cB}\leq \|T\|_{M(B_e)}.$
 
 Let $D$ be the completion of $\bF^0_\cB$ in $M(B_e).$
 Then the conclusion of the last paragraph implies the existence of a unique surjective *-homomorphism $\pi\colon D\to \bF^\red_\cB$ extending the identity operator of $\bF^0_\cB.$
 The proof will be completed if we can show $\pi$ is injective, because in that case it is isometric.
 
 Suppose $T\in D$ satisfies $\mu(T)=0$ and take a sequence $\{T_n\}_{n\geq 0}\in \bF^0_\cB \subset D$ approximating $T.$
 Then by Theorem \ref{thm:properties of bra-ket operators} and Lemma \ref{lem:norm dominated by square integrable norm}, for all $x\in E^0_\cB$ we have
 \begin{align*}
  \|Tx\|_{B_e}
    & = \lim_n \|T_nx\|_{B_e}
      \leq \limsup_n\,  \||T_n x\raa\| 
       = \limsup_n\, \| \laa T_n x,T_n x\raa_\cB \|_\red^{1/2}\\ 
    & \leq \limsup_n\, \| T_n x \|_{E^\red_\cB}
       = \limsup_n\, \| \pi(T_n) x \|_{E^\red_\cB}
       = \|\pi(T)x\|_{E^\red_\cB} = 0.
 \end{align*}
 This shows $T\in M(B_e)$ vanishes in the dense set $E^0_\cB\subset B_e,$ thus $T=0$ and $\pi$ is injective.
\end{proof}

The results presented above for bra-ket operators are generalizations, to Fell bundles, of those presented in \cite{My00,My01}.
In a forthcoming article we will prove an imprimitivity theorem for exotic crossed sectional C*-algebras of Fell bundles using the exotic fixed point algebras constructed in this article.   

\bibliography{FerraroBiblio}
\bibliographystyle{plain}
\end{document}